\newcommand{\mail}{
\scalebox{0.6}{\includegraphics{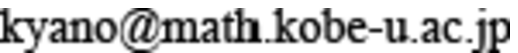}}}
\theoremstyle{plain}
\newtheorem{Thm}{Theorem}[section]
\newtheorem{Lem}[Thm]{Lemma}
\newtheorem{Prop}[Thm]{Proposition}
\newtheorem{Cor}[Thm]{Corollary}
\theoremstyle{definition}
\newtheorem{Rem}[Thm]{Remark}
\newtheorem{Ex}[Thm]{Example}
\newcommand{\bC}{\ensuremath{\mathbb{C}}}
\newcommand{\bD}{\ensuremath{\mathbb{D}}}
\newcommand{\bR}{\ensuremath{\mathbb{R}}}
\newcommand{\bS}{\ensuremath{\mathbb{S}}}
\newcommand{\bT}{\ensuremath{\mathbb{T}}}
\newcommand{\cB}{\ensuremath{\mathcal{B}}}
\newcommand{\cC}{\ensuremath{\mathcal{C}}}
\newcommand{\cF}{\ensuremath{\mathcal{F}}}
\newcommand{\cG}{\ensuremath{\mathcal{G}}}
\newcommand{\cM}{\ensuremath{\mathcal{M}}}
\newcommand{\cR}{\ensuremath{\mathcal{R}}}
\newcommand{\cT}{\ensuremath{\mathcal{T}}}
\newcommand{\cU}{\ensuremath{\mathcal{U}}}
\newcommand{\vm}{\ensuremath{\mbox{{\boldmath $m$}}}}
\newcommand{\vn}{\ensuremath{\mbox{{\boldmath $n$}}}}
\newcommand{\vp}{\ensuremath{\mbox{{\boldmath $p$}}}}
\renewcommand{\Re}{{\rm Re} \ }
\newcommand{\eps}{\ensuremath{\varepsilon}}
\newcommand{\e}{{\rm e}}
\renewcommand{\d}{{\rm d}}
\newcommand{\law}{\stackrel{{\rm law}}{=}}
\newcommand{\claw}{\stackrel{{\rm law}}{\longrightarrow}}
\renewcommand{\hat}{\widehat}
\renewcommand{\tilde}{\widetilde}
\newcommand{\abra}[1]{\left| #1 \right|}
\newcommand{\cbra}[1]{\left( #1 \right)}
\newcommand{\kbra}[1]{\left\{ #1 \right\}}
\newcommand{\ebra}[1]{\left[ #1 \right]}
\newcommand{\n}{\nonumber}
\numberwithin{equation}{section}
\newcounter{No}
\newcounter{Ci}[subsection]
\renewcommand\section{\@startsection {section}{1}{\z@}%
                                   {-3.5ex \@plus -1ex \@minus -.2ex}%
                                   {2.3ex \@plus.2ex}%
                                   {\normalfont\large\bf}}
\newcommand{\fe}{\ensuremath{\mathfrak{e}}}
\newcounter{CO}
\newcounter{EO}
\newcommand{\ID}{{\rm (ID)}}
\newcommand{\SD}{{\rm (SD)}}
\newcommand{\GGC}{{\rm (GGC)}}
\renewcommand\subsection{\@startsection {subsection}{1}{\z@}%
                                   {-3.5ex \@plus -1ex \@minus -.2ex}%
                                   {2.3ex \@plus.2ex}%
                                   {\normalfont\normalsize\bf}}
\begin{document}

%
\begin{center}
{\Large \bf 
On the laws of first hitting times of points 
for one-dimensional symmetric stable L\'evy processes 
}
\end{center}
\begin{center}
Kouji \textsc{Yano}\footnote{
Department of Mathematics, Graduate School of Science, 
Kobe University, Kobe, Japan. \mail}\footnote{
The research of this author is supported by KAKENHI (20740060)}, \qquad 
Yuko \textsc{Yano}\footnote{
Research Institute for Mathematical Sciences, Kyoto University, Kyoto, Japan.\label{foot: RIMS}} 
\qquad and \qquad 
Marc \textsc{Yor}\footnote{
Laboratoire de Probabilit\'es et Mod\`eles Al\'eatoires, Universit\'e Paris VI, Paris, France.}\footnote{
Institut Universitaire de France}\footnotemark[3]
\end{center}
\begin{center}
{\small \today}
\end{center}
\bigskip


\begin{abstract}
Several aspects of the laws of first hitting times of points 
are investigated 
for one-dimensional symmetric stable L\'evy processes. 
It\^o's excursion theory plays a key role in this study. 
\end{abstract}

{\small Keywords: 
Symmetric stable L\'evy process, 
excursion theory, 
first hitting times.} 
\bigskip

\section{Introduction}

For one-dimensional Brownian motion, 
the laws of several random times, 
such as first hitting times of points and intervals, can be expressed explicitly 
in terms of elementary functions. 
Moreover, these laws are infinitely divisible (abbrev. as {\ID}), 
and in fact, self-decomposable (abbrev. as {\SD}). 

The aim of the present paper 
is to study various aspects of the laws of first hitting times of points and last exit times 
for one-dimensional symmetric stable L\'evy processes. 
We shall put some special emphasis on the following objects: 
(i) the laws of the ratio of two independent gamma variables, 
which, as is usual, we call {\em beta variables of the second kind}; 
(ii) harmonic transform of It\^o's measure of excursions away from the origin. 
The present study is motivated by 
a recent work \cite{YYY} by the authors 
about penalisations of symmetric stable L\'evy paths. 

The organisation of the present paper is as follows. 
In Section \ref{sec: alpha Cauchy}, 
we recall several facts concerning beta and gamma variables and their variants. 
In Section \ref{sec: exc}, 
we briefly recall It\^o's excursion theory 
and make some discussions about last exit times. 
In Section \ref{sec: harm}, 
we consider harmonic transforms of symmetric stable L\'evy processes, 
which plays an important role in our study. 
In Section \ref{sec: hitting}, 
we discuss the laws of first hitting times of single points 
and last exit times 
for symmetric stable L\'evy processes. 
In Section \ref{sec: hitting2}, 
we discuss the laws of those random times 
for the absolute value of symmetric stable L\'evy processes, 
which includes the reflecting Brownian motion as a special case.

\section{Preliminaries: several important random variables}
\label{sec: alpha Cauchy}

\subsection{Generalized gamma convolutions}

For $ a>0 $, we write $ \cG_a $ for a gamma variable with parameter $ a $: 
\begin{align}
P(\cG_a \in \d x) 
= \frac{1}{\Gamma(a)} x^{a-1} \e^{-x} \d x 
, \qquad x>0 . 
\label{}
\end{align}
As a rather general framework, 
we recall the class of {\em generalized gamma convolutions} (abbrev. as {\GGC}), 
which is an important subclass of {\SD}; namely, 
\begin{align}
\GGC \subset \SD \subset \ID . 
\label{}
\end{align}
A nice reference for details is the monograph \cite{MR1224674} by Bondesson. 
A recent survey can be found in James--Roynette--Yor \cite{JRY-GGC}.

A random variable $ X $ 
is said to be {\it of {\GGC} type} if 
it is a weak limit of linear combinations of independent gamma variables 
with positive coefficients.

\begin{Thm}[See, e.g., {\cite[Thm.3.1.1]{MR1224674}}]
A random variable $ X $ is of {\GGC} type if and only if 
there exist a non-negative constant $ a $ 
and a non-negative measure $ U(\d t) $ on $ (0,\infty ) $ 
with 
\begin{align}
\int_{(0,1]} |\log t| U(\d t) < \infty 
\qquad \text{and} \qquad 
\int_{(1,\infty )} \frac{1}{t} U(\d t) < \infty 
\label{}
\end{align}
such that 
\begin{align}
E[ \e^{-\lambda X} ] = 
\exp \kbra{ - a \lambda - \int_{(0,\infty )} \log (1+\lambda / t) U(\d t) } 
, \qquad \lambda > 0 . 
\label{}
\end{align}
\end{Thm}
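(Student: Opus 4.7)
The plan is to prove both implications via Laplace transforms and the continuity theorem for non-negative random variables.

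\emph{Sufficiency.} The key observation is $E[\e^{-\lambda(1/t)\cG_b}] = (1+\lambda/t)^{-b}$, so $(1/t)\cG_b$ realizes the exponent $b\log(1+\lambda/t)$, and any deterministic $a\geq 0$ is itself a weak limit of scaled gammas (take $(a/n)\cG_n$, whose variance $a^2/n$ vanishes). When $U=\sum_i b_i\delta_{t_i}$ is a finite atomic measure, the prescribed Laplace exponent is realized exactly by a finite linear combination of independent gammas plus the constant $a$. For a general $U$ satisfying the integrability conditions, approximate $U$ by atomic measures $U_n$ supported on a fine grid in $[1/n,n]$; the pointwise convergence
\begin{align*}
\int_{(0,\infty)} \log(1+\lambda/t)\,U_n(\d t) \longrightarrow \int_{(0,\infty)} \log(1+\lambda/t)\,U(\d t)
\end{align*}
follows by dominated convergence, using $\log(1+\lambda/t)\leq \lambda/t$ on $[1,\infty)$ and $\log(1+\lambda/t)\leq \log(1+\lambda)+|\log t|$ on $(0,1]$. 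Continuity of Laplace transforms then yields the desired weak convergence.

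\emph{Necessity.} Suppose $X$ is the weak limit of $X_n=a_n+\sum_i c_i^{(n)}\cG_{b_i^{(n)}}$ with independent gammas. Setting $U_n:=\sum_i b_i^{(n)}\delta_{1/c_i^{(n)}}$,
\begin{align*}
\psi_n(\lambda) := -\log E[\e^{-\lambda X_n}] = a_n\lambda + \int_{(0,\infty)}\log(1+\lambda/t)\,U_n(\d t)
\end{align*}
converges pointwise to $\psi(\lambda):=-\log E[\e^{-\lambda X}]$. Since $a_n\leq \psi_n(1)$ is bounded, the weighted measure $\tilde U_n(\d t):=\log(1+1/t)\,U_n(\d t)$ has uniformly bounded total mass $\psi_n(1)-a_n$; after extracting a subsequence one may assume $a_n\to a'\geq 0$ and $\tilde U_n\to \tilde U'$ weakly on the compactification $[0,\infty]$. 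The function $f_\lambda(t):=\log(1+\lambda/t)/\log(1+1/t)$ extends continuously to $[0,\infty]$ with $f_\lambda(0)=1$ and $f_\lambda(\infty)=\lambda$, so passing to the limit yields
\begin{align*}
\psi(\lambda) = a'\lambda + \tilde U'(\{0\}) + \lambda\,\tilde U'(\{\infty\}) + \int_{(0,\infty)} \log(1+\lambda/t)\,U(\d t),
\end{align*}
where $U(\d t)$ on $(0,\infty)$ is obtained from the restriction of $\tilde U'$ to $(0,\infty)$ by dividing by $\log(1+1/t)$. Evaluating at $\lambda=0$ forces $\tilde U'(\{0\})=0$, and absorbing $\tilde U'(\{\infty\})$ into the drift gives $a:=a'+\tilde U'(\{\infty\})\geq 0$. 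The integrability conditions for $U$ follow from finiteness of $\tilde U'$ on $(0,\infty)$, since $|\log t|/\log(1+1/t)\to 1$ as $t\to 0$ and $(1/t)/\log(1+1/t)\to 1$ as $t\to\infty$.

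The main obstacle is the compactness step in the necessity direction: the weight $\log(1+1/t)$ must be chosen precisely so that $\tilde U_n$ has uniformly bounded mass \emph{and} the test functions $f_\lambda$ have continuous limits at both endpoints. The delicate bookkeeping of mass escaping to $0$ (excluded by $\psi(0)=0$) and to $\infty$ (absorbed into the drift $a$) is the technical heart of Thorin's theorem, as carried out in \cite{MR1224674}. Uniqueness of the pair $(a,U)$ then follows from analyticity of $\psi$ on $(0,\infty)$ and the injectivity of Laplace inversion, completing the argument.
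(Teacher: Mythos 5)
The paper states this theorem with a citation to Bondesson's monograph and gives no proof of its own, so there is no in-paper argument for me to compare yours against; I therefore evaluate your proof on its merits. Your argument is correct and is essentially the standard proof of Thorin's characterization. The sufficiency direction (discretize $U$ on $[1/n,n]$, realize each atom by a scaled gamma, realize the drift by $(a/n)\cG_n$, and invoke the L\'evy continuity theorem) and the necessity direction (pass to the compactification $[0,\infty]$ after reweighting by $\log(1+1/t)$ so that the $\tilde U_n$ are uniformly bounded, extract a weak-$*$ limit, track the mass escaping to the two endpoints, and use $\psi(0+)=0$ to kill the mass at $0$ while absorbing the mass at $\infty$ into the drift) are both sound, and you correctly identify the choice of weight as the technical crux. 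Two small remarks. First, the paper's definition of \GGC{} has no drift in the approximating linear combination, so you may simply set $a_n=0$ in the necessity step; keeping $a_n$ is harmless but not needed, and the drift $a$ in the conclusion then arises entirely from $\tilde U'(\{\infty\})$. Second, in the sufficiency step the convergence $\int\log(1+\lambda/t)\,U_n(\d t)\to\int\log(1+\lambda/t)\,U(\d t)$ is not literally an instance of dominated convergence, since it is the measure rather than the integrand that varies; what you actually need is a Riemann-sum approximation on each compact $[1/n,n]$ (mesh $\to 0$ fast enough) combined with the integrability bounds you quote to show that the truncated tails $\int_{(0,1/n]}$ and $\int_{[n,\infty)}$ of $\log(1+\lambda/t)\,U(\d t)$ vanish. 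The inequalities you supply are exactly what is needed for the tail estimate, so the idea is right; only the attribution to "dominated convergence" is loose. Finally, the closing remark on uniqueness of $(a,U)$ is not claimed in the statement, so it is dispensable.
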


In what follows 
we shall call $ U(\d t) $ the {\em Thorin measure} 
associated with $ X $. 

By simple calculations, it follows that 
\begin{align}
E[ \e^{-\lambda X} ] 
=& 
\exp \kbra{ - a \lambda + \int_0^{\infty } \cbra{ \frac{1}{s+\lambda} - \frac{1}{s} } 
U((0,s)) \d s } 
\\
=& 
\exp \kbra{ - a \lambda - \int_0^{\infty } \cbra{1-\e^{-\lambda u}} 
\frac{1}{u} \cbra{ \int_{(0,\infty )} \e^{-u t} U(\d t) } \d u } 
. 
\label{}
\end{align}
In particular, the following holds: 
{\it 
The law of $ X $ is of {\ID} type 
and its L\'evy measure has a density 
given by $ n(u) := \frac{1}{u} \int_{(0,\infty )} \e^{-u t} U(\d t) $. 
}
Since $ u n(u) $ is non-increasing, 
the law of $ X $ is of {\SD} type.

\begin{Thm}[see, e.g., {\cite[Thm.4.1.1 and 4.1.4]{MR1224674}}] \label{thm: GGC decomp}
Suppose that $ X $ is of {\GGC} type and that 
$ a=0 $ and $ b := U((0,\infty )) < \infty $. 
Then $ X $ may be represented as $ X \law \cG_b Y $ 
for some random variable $ Y $ independent of $ \cG_b $. 
The total mass of the Thorin measure is given by 
\begin{align}
b = \sup \kbra{ p \ge 0: \lim_{x \to 0+} \frac{\rho(x)}{x^{p-1}} = 0 } 
\label{eq: expression of Thorin meas}
\end{align}
where $ \rho $ is the density of the law of $ X $ with respect to the Lebesgue measure: 
\begin{align}
\rho(x) = \frac{1}{\Gamma(b)} x^{b-1} E \ebra{ \frac{1}{Y^b} \exp \cbra{ -\frac{x}{Y} } }. 
\label{}
\end{align}
\end{Thm}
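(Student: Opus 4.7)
The plan is to first establish the multiplicative decomposition $X \law \cG_b Y$ with $Y$ independent of $\cG_b$; the density formula and the characterization \eqref{eq: expression of Thorin meas} will then drop out by direct computation.

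To prove the decomposition, I realize $X$ as a deterministic functional of a standard gamma subordinator. Write $U = b\mu$ with $\mu$ a probability measure on $(0,\infty)$, and let $\phi \colon [0,b] \to (0,\infty)$ be measurable with $\phi_\ast(\mathrm{Leb}|_{[0,b]}) = U$ (for instance $\phi(s) = F_\mu^{-1}(s/b)$, the quantile function of $\mu$ composed with $s \mapsto s/b$). Let $(\gamma_s)_{s \ge 0}$ be a standard gamma subordinator, with $\gamma_s \law \cG_s$, and set
\begin{align*}
\tilde X := \int_{(0,b]} \phi(s)^{-1}\, \d\gamma_s.
\end{align*}
The exponential formula for Poisson integrals, combined with the elementary identity $\int_0^\infty (1 - \e^{-\lambda r u})\, u^{-1} \e^{-u}\, \d u = \log(1+\lambda r)$, yields
\begin{align*}
E[\e^{-\lambda \tilde X}] = \exp\kbra{-\int_0^b \log(1+\lambda/\phi(s))\, \d s} = \exp\kbra{-\int_{(0,\infty)} \log(1+\lambda/t)\, U(\d t)},
\end{align*}
so $\tilde X \law X$. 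Next I invoke the classical independence property of the gamma subordinator: $\gamma_b \law \cG_b$ is independent of the normalized random measure $\pi(\d s) := \d\gamma_s/\gamma_b$ on $[0,b]$. Finite-dimensionally this is the beta--gamma algebra applied to the independent gamma increments of $\gamma$, and the process-level statement follows by a routine extension. Factoring $\gamma_b$ out of the integral yields
\begin{align*}
\tilde X = \gamma_b \cdot \int_0^b \phi(s)^{-1}\, \pi(\d s) = \cG_b \cdot Y,
\end{align*}
with $Y := \int_0^b \phi(s)^{-1}\, \pi(\d s)$ independent of $\cG_b$.

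With the decomposition in hand, the density formula is immediate: conditional on $Y = y$, the variable $\cG_b Y$ has density $x \mapsto \Gamma(b)^{-1} x^{b-1} y^{-b} \e^{-x/y}$, and integrating over the law of $Y$ produces the stated expression for $\rho$. To prove \eqref{eq: expression of Thorin meas}, observe that
\begin{align*}
\rho(x)/x^{b-1} = \Gamma(b)^{-1}\, E[Y^{-b} \e^{-x/Y}]
\end{align*}
is non-decreasing as $x \downarrow 0$, with strictly positive (possibly infinite) limit by monotone convergence. For $p > b$, this gives $\rho(x)/x^{p-1} = x^{b-p} \cdot \rho(x)/x^{b-1} \to +\infty$, so such $p$ are excluded from the supremum. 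For $p < b$, the asymptotic $-\log E[\e^{-\lambda X}] \sim b \log \lambda$ at infinity (extracted from the defining formula for the Laplace exponent) yields $E[X^{-p}] < \infty$, hence $E[Y^{-p}] = E[X^{-p}] \Gamma(b)/\Gamma(b-p) < \infty$; combined with the bound $x^{b-p} y^{-b} \e^{-x/y} \le (b-p)^{b-p} \e^{-(b-p)} y^{-p}$ (obtained by maximizing in $x$), dominated convergence gives $\rho(x)/x^{p-1} \to 0$. Hence the supremum in \eqref{eq: expression of Thorin meas} equals $b$.

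The main obstacle is the independence of $\gamma_b$ from its normalized bridge $(\gamma_s/\gamma_b)_{0 \le s \le b}$. Although classical — at the heart of Ferguson's Dirichlet-process framework — it is the one non-routine input of the proof; once granted, everything else reduces to bookkeeping with the Laplace transform and the resulting density formula.
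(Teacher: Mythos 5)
The paper itself offers no proof of this statement: it is quoted verbatim from Bondesson's monograph, so there is no ``paper's own proof'' to compare against. Your argument is correct and self-contained, and it takes a genuinely different (probabilistic) route from Bondesson's original treatment, which proceeds analytically via Laplace/Mellin transforms and complete monotonicity of $\rho(x)/x^{b-1}$. Your construction realises the GGC as $\tilde X=\int_0^b\phi(s)^{-1}\,\d\gamma_s$ for a gamma subordinator $(\gamma_s)$ and then factors out $\gamma_b$ by the Ferguson independence of $\gamma_b$ from the normalised random measure $\d\gamma_s/\gamma_b$; this delivers the decomposition $X\law\cG_b Y$, the density formula, and the supremum characterisation of $b$ in one stroke. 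What the probabilistic route buys is transparency: the mixture representation of $\rho$ is an immediate disintegration, and the identification of $Y$ as $\int\phi^{-1}\,\d\pi$ with $\pi$ a Dirichlet process explains \emph{why} the gamma factor appears. What it costs is one non-trivial external input (Ferguson's independence), whereas Bondesson's argument is elementary-analytic and yields in addition the complete monotonicity of $\rho(x)/x^{b-1}$, which your proof recovers only implicitly.

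Two small points worth tightening. First, your asymptotic ``$-\log E[\e^{-\lambda X}]\sim b\log\lambda$'' is slightly stronger than what you actually need or what always holds (the constant term $\int\log t\,U(\d t)$ need not converge); what suffices, and what the representation gives directly, is that for every $\eps>0$ one has $E[\e^{-\lambda X}]\le C_\eps\lambda^{-(b-\eps)}$ for large $\lambda$ (bound $\int\log(1+\lambda/t)\,U(\d t)$ from below by $U((0,K])\log(1+\lambda/K)$ and pick $K$ with $U((0,K])>b-\eps$), and then $E[X^{-p}]=\Gamma(p)^{-1}\int_0^\infty\lambda^{p-1}E[\e^{-\lambda X}]\,\d\lambda<\infty$ for $p<b$. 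Second, in the step ``$\rho(x)/x^{b-1}$ is non-decreasing as $x\downarrow 0$, with strictly positive limit,'' it is worth noting explicitly that the limit $E[Y^{-b}]/\Gamma(b)$ is positive simply because $Y>0$ a.s.\ (which your construction guarantees since $\phi>0$ and $\pi$ is a probability measure), so that for $p>b$ the ratio $\rho(x)/x^{p-1}=x^{b-p}\cdot\rho(x)/x^{b-1}$ indeed diverges and the supremum is exactly $b$. Neither remark affects the correctness of the argument.
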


\begin{Rem}
For a given $ X $, the law of the variable $ Y $ which represents $ X $ 
as in Theorem \ref{thm: GGC decomp} 
is unique; in fact, the gamma distribution is {\em simplifiable} 
(see \cite[Sec.1.12]{MR2016344}). 
\end{Rem}

\begin{Rem}
We do not know how to characterise explicitly 
the class of possible $ Y $'s which represent variables of {\GGC} type 
as in Theorem \ref{thm: GGC decomp}. 
As a partial converse, 
Bondesson (see \cite[Thm.6.2.1]{MR1224674}) has introduced a remarkable class 
which is closed under multiplication of independent gamma variables. 
\end{Rem}

\subsection{Beta and gamma variables}

We introduce notations 
and recall several basic facts concerning the beta and gamma variables. 
See \cite[Chap.4]{MR2016344} for details. 
For $ a,b>0 $, we write 
$ \cB_{a,b} $ for a beta variable (of the first kind) with parameters $ a,b $: 
\begin{align}
P(\cB_{a,b} \in \d x) 
= \frac{1}{B(a,b)} x^{a-1} (1-x)^{b-1} \d x 
, \qquad 0<x<1 
\label{}
\end{align}
where $ B(a,b) $ is the beta function: 
\begin{align}
B(a,b) = \frac{\Gamma(a) \Gamma(b)}{\Gamma(a+b)} . 
\label{}
\end{align}
Note that 
$ \cB_{a,b} \law 1-\cB_{b,a} $ for $ a,b>0 $ 
and that 
$ \cB_{a,1} \law \cU^{\frac{1}{a}} $ for $ a>0 $ 
where $ \cU $ is a uniform variable on $ (0,1) $. 
The following identity in law is well-known: 
{\it For any $ a,b>0 $, 
\begin{align}
\cbra{ \cG_a,\hat{\cG}_b } \law 
\cbra{ \cB_{a,b} \cG_{a+b} , (1-\cB_{a,b}) \cG_{a+b} } , 
\label{eq: Za Zb}
\end{align}
or equivalently, 
\begin{align}
\cbra{ \cG_a + \hat{\cG}_b, \frac{\cG_a}{\cG_a + \hat{\cG}_b} } \law 
\cbra{ \cG_{a+b} , \cB_{a,b} } 
\label{}
\end{align}
where on the left hand sides $ \cG_a $ and $ \hat{\cG}_b $ are independent 
and on the right hand sides $ \cB_{a,b} $ and $ \cG_{a+b} $ are independent. 
}
The proof is elementary; it can be seen in \cite[(4.2.1)]{MR2016344}, and so we omit it. 

Using the formula \eqref{eq: Za Zb}, we obtain another expression 
of the Thorin measure of a variable of {\GGC} type. 

\begin{Thm}
Under the same assumption as in Theorem \ref{thm: GGC decomp}, 
the total mass of the Thorin measure is given by 
\begin{align}
b = \inf \kbra{ c \ge 0 : X \law \cG_c Y_c 
\ \text{for some random variable $ Y_c $ independent of $ \cG_c $} } . 
\label{eq: another expression of Thorin measure}
\end{align}
\end{Thm}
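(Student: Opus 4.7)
The plan is to establish the two inequalities between $b$ and the infimum on the right-hand side of \eqref{eq: another expression of Thorin measure} separately. The upper bound $\inf \le b$ is immediate: Theorem~\ref{thm: GGC decomp} itself supplies a representation $X \law \cG_b Y$ with $Y$ independent of $\cG_b$, so $c = b$ belongs to the set whose infimum we are computing.

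For the reverse inequality, I would fix an admissible $c$, i.e.\ assume $X \law \cG_c Y_c$ with $Y_c$ independent of $\cG_c$, and argue $c \ge b$ by means of the density characterisation \eqref{eq: expression of Thorin meas}. Conditioning on $Y_c$ and using the gamma density, one writes
$$\rho(x) = \frac{x^{c-1}}{\Gamma(c)} E\bigl[ Y_c^{-c} e^{-x/Y_c} \bigr], \qquad x > 0.$$
Letting $x \downarrow 0$, monotone convergence yields
$$\lim_{x \to 0+} \frac{\rho(x)}{x^{c-1}} = \frac{E[Y_c^{-c}]}{\Gamma(c)} \in (0, \infty],$$
a strictly positive quantity whether or not $E[Y_c^{-c}]$ is finite. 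Hence $c \notin S := \{p \ge 0 : \lim_{x \to 0+} \rho(x)/x^{p-1} = 0\}$. Since $S$ is downward closed (for $p' < p$ one has $\rho(x)/x^{p'-1} = x^{p-p'}\,\rho(x)/x^{p-1} \to 0$ whenever $\rho(x)/x^{p-1}\to 0$), its supremum is at most $c$. By \eqref{eq: expression of Thorin meas} this supremum equals $b$, so $b \le c$; taking the infimum over admissible $c$ closes the loop.

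I do not foresee any real obstacle: the density formula follows by straightforward conditioning on $Y_c$, and the only case requiring a moment of care is $E[Y_c^{-c}] = +\infty$, where the limit above is $+\infty$ rather than a positive constant — in either case it is non-zero, which is all that is needed to exclude $c$ from $S$ and conclude $b \le c$.
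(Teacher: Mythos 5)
Your proof is correct and follows essentially the same route as the paper: both hinge on writing $\rho(x) = \frac{x^{c-1}}{\Gamma(c)}E[Y_c^{-c}e^{-x/Y_c}]$ for any admissible $c$, applying monotone convergence as $x\downarrow 0$, and comparing with the characterisation \eqref{eq: expression of Thorin meas}. Your version is marginally cleaner in two small respects — you get the inequality $\tilde{b}\le b$ at once by observing that $c=b$ itself lies in the admissible set (the paper instead invokes the beta--gamma factorisation $\cG_b\law \cG_c\cB_{b,c-b}$ for $c>b$), and you argue $b\le c$ directly via the downward-closedness of $S$ rather than by contradiction, while also noting explicitly that the argument is unharmed when $E[Y_c^{-c}]=+\infty$ — but the underlying mechanism is identical.
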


\begin{proof}
Let us write $ \tilde{b} $ 
for the right hand side of \eqref{eq: another expression of Thorin measure}. 

By Theorem \ref{thm: GGC decomp}, we have $ X \law \cG_b Y $ 
for some random variable $ Y $ independent of $ \cG_b $. 
For any $ c>b $, we have $ \cG_b \law \cG_c \cB_{b,c-b} $ 
where $ \cG_c $ and $ \cB_{b,c-b} $ are independent, 
which implies that $ c \ge \tilde{b} $ for any such $ c $. 
Hence we obtain $ b \ge \tilde{b} $. 

Suppose that $ b > \tilde{b} $. 
Then we may take $ c $ with $ b>c>\tilde{b} $ such that 
$ X \law \cG_c Z $ for some random variable $ Z $ independent of $ \cG_c $. 
Then we have another expression of the density $ \rho(x) $ as 
\begin{align}
\rho(x) = \frac{1}{\Gamma(c)} x^{c-1} E \ebra{ \frac{1}{Z^c} \exp \cbra{ -\frac{x}{Z} } }. 
\label{}
\end{align}
By the monotone convergence theorem, this implies that 
\begin{align}
\lim_{x \to 0+} \frac{\rho(x)}{x^{c-1}} = \frac{1}{\Gamma(c)} E \ebra{ \frac{1}{Z^c} } > 0 , 
\label{}
\end{align}
which shows that $ c \ge b $ by the formula \eqref{eq: expression of Thorin meas}. 
This leads to a contradiction. 
Therefore we conclude that $ b=\tilde{b} $. 
\end{proof}

\subsection{Beta variables of the second kind}

Let us consider the ratios of two independent gamma variables, 
which are sometimes called {\em beta variables of the second kind} 
or {\em beta prime variables}. 
By the identity \eqref{eq: Za Zb}, the following is obvious: 
{\it For any $ a,b>0 $,} 
\begin{align}
\frac{\cG_a}{\hat{\cG}_b} \law \frac{\cB_{a,b}}{1-\cB_{a,b}} . 
\label{}
\end{align}
The law of the ratio $ \cG_a/\hat{\cG}_b $ is given as follows: 
{\it For any $ a,b>0 $,} 
\begin{align}
P \cbra{ \frac{\cG_a}{\hat{\cG}_b} \in \d x } 
= \frac{1}{B(a,b)} \frac{x^{a-1}}{(1+x)^{a+b}} \d x 
, \qquad x>0 . 
\label{}
\end{align}

In spite of its simple statement, 
the following theorem is rather difficult to prove. 

\begin{Thm}[see, e.g., {\cite[Ex.4.3.1]{MR1224674}}]
For any $ a,b>0 $, 
the ratio $ \cG_a/\hat{\cG}_b $ is of {\GGC} type. 
Its Thorin measure has total mass $ a $. 
\end{Thm}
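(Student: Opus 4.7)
The plan is to deduce the GGC property from Bondesson's theory of \emph{hyperbolically completely monotone} (HCM) densities, and then to read off the total mass of the Thorin measure from the behaviour of the density at the origin via Theorem \ref{thm: GGC decomp}.

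Recall that a positive function $h$ on $(0,\infty )$ is called HCM if, for each $u>0$, the map $v \mapsto h(uv)h(u/v)$ depends on $v$ only through $w = v + v^{-1}$ and is completely monotone in $w$; a cornerstone of \cite[Ch.~5]{MR1224674} is that any probability density on $(0,\infty )$ which is HCM generates a law of GGC type. I would first verify that the density
\[
\rho(x) = \frac{1}{B(a,b)}\, \frac{x^{a-1}}{(1+x)^{a+b}}, \qquad x>0,
\]
is HCM. A direct computation yields
\[
\rho(uv)\rho(u/v)
= \frac{u^{2(a-1)}}{B(a,b)^2} \bigl[(1+uv)(1+u/v)\bigr]^{-(a+b)}
= \frac{u^{2(a-1)}}{B(a,b)^2} \bigl[(1+u^2) + u(v+v^{-1})\bigr]^{-(a+b)},
\]
which is manifestly a function of $w = v + v^{-1}$, and the resulting expression $w \mapsto [(1+u^2)+uw]^{-(a+b)}$ is completely monotone on $(0,\infty )$ as the reciprocal of a positive affine function raised to a positive power. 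Hence $\rho $ is HCM, and consequently $\cG_a/\hat{\cG}_b$ is of GGC type.

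To determine the total mass of the Thorin measure, I would apply the characterisation \eqref{eq: expression of Thorin meas} directly. Since
\[
\frac{\rho(x)}{x^{p-1}} = \frac{1}{B(a,b)}\,\frac{x^{a-p}}{(1+x)^{a+b}},
\]
this quantity tends to $0$ as $x \to 0+$ precisely when $p<a$, equals $1/B(a,b) \neq 0$ when $p=a$, and diverges when $p>a$. Thus the supremum appearing in \eqref{eq: expression of Thorin meas} is exactly $a$, so the Thorin measure has total mass $a$.

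The main obstacle is really the need to import the HCM machinery from outside the excerpt, since the paper itself does not introduce HCM; one has to invoke Bondesson's implication ``HCM density $\Rightarrow $ GGC law''. An alternative path would be to identify the Thorin measure directly by showing that $-\tfrac{\d}{\d \lambda}\log E[\e^{-\lambda X}]$ is a Stieltjes transform of a non-negative measure of total mass $a$ on $(0,\infty )$, starting from the representation $E[\e^{-\lambda X}] = \frac{1}{\Gamma(b)} \int_0^{\infty } y^{a+b-1}(y+\lambda)^{-a} \e^{-y}\,\d y$; but this would require rather delicate integral manipulations, whereas the HCM route reduces the entire first half of the proof to the elementary identity $(1+uv)(1+u/v) = 1 + u^2 + u(v+v^{-1})$.
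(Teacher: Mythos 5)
The paper provides no proof of this statement; it simply defers to Bondesson with the remark ``For the proof, see \cite{MR1224674}. We omit the details.'' So there is no in-text argument to compare against, and your proposal supplies what the paper omits.

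Your argument is essentially correct. The HCM computation is right: $\rho(uv)\rho(u/v)=\frac{u^{2(a-1)}}{B(a,b)^2}\bigl[(1+u^2)+uw\bigr]^{-(a+b)}$ with $w=v+v^{-1}$, and for fixed $u>0$ the map $w\mapsto\bigl[(1+u^2)+uw\bigr]^{-(a+b)}$ is completely monotone as a negative power of a positive affine function, so $\rho$ is HCM and Bondesson's implication (HCM density $\Rightarrow$ GGC law, \cite[Thm.~5.1.2]{MR1224674}) gives the first assertion. One small point: the paper's reference is to \cite[Ex.4.3.1]{MR1224674}, which treats the beta-prime via Pick/Stieltjes-transform arguments rather than via HCM; you have in effect replaced that route with the cleaner Chapter~5 criterion, which is a legitimate and arguably slicker choice.

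The second half has a small but real gap: \eqref{eq: expression of Thorin meas} is asserted in Theorem~\ref{thm: GGC decomp} only under the hypotheses that the drift vanishes and that the Thorin measure has finite total mass (the $a$ and $b$ of that theorem, not to be confused with your gamma parameters $a,b$), and you apply it without checking either. Both can be verified quickly and it is worth doing so. The cleanest route is to observe, via Watson's lemma applied to $\rho(x)\sim x^{a-1}/B(a,b)$ as $x\to0+$, that $E[\e^{-\lambda\,\cG_a/\hat{\cG}_b}]\sim \frac{\Gamma(a)}{B(a,b)}\,\lambda^{-a}$ as $\lambda\to\infty$; since for a GGC one has $-\log E[\e^{-\lambda X}]=c\lambda+\int_{(0,\infty)}\log(1+\lambda/t)\,U(\d t)$, polynomial decay forces the drift $c$ to be $0$, boundedness of $-\log E[\e^{-\lambda X}]/\log\lambda$ forces $U((0,\infty))<\infty$, and matching the exponent immediately gives $U((0,\infty))=a$. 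This actually yields the total mass directly, so you could bypass \eqref{eq: expression of Thorin meas} altogether; but if you prefer to use that formula, the Watson-lemma asymptotic serves to legitimise it by confirming the hypotheses, after which your computation of the supremum is exactly right.
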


For the proof, see \cite{MR1224674}. 
We omit the details.

\subsection{$ \alpha $-Cauchy variables and Linnik variables}

It is well-known that 
the standard Cauchy distribution $ \frac{1}{\pi} \frac{1}{1+x^2} \d x $ 
and 
the bilateral exponential distribution $ \frac{1}{2} \e^{-|x|} \d x $ 
satisfy the following relation: 
\begin{quote}
{\bf (R)} The characteristic function of one of the two distributions 
is proportional to the density of the other. 
\end{quote}
We shall introduce $ \alpha $-analogues of these two distributions 
which satisfy the relation {\bf (R)}. 

Let us introduce the $ \alpha $-analogue for $ \alpha >1 $ 
of the standard Cauchy variable $ \cC $, which, as just recalled, is given by 
\begin{align}
P( \cC \in \d x ) = \frac{1}{\pi} \frac{1}{1+x^2} \d x 
, \qquad x \in \bR . 
\label{}
\end{align}
We define the {\em $ \alpha $-Cauchy variable} $ \cC_{\alpha } $ 
as follows: 
\begin{align}
P(\cC_{\alpha } \in \d x) 
= \frac{\sin (\pi/\alpha )}{2 \pi/\alpha } \frac{1}{1+|x|^{\alpha }} \d x 
, \qquad x \in \bR . 
\label{}
\end{align}
Note that $ \cC_2 \law \cC $. 
By a change of variables, the following is easy to see: 
{\it 
For $ \alpha >1 $, let $ \gamma = 1/\alpha \in (0,1) $. 
Then it holds that 
\begin{align}
\cC_{\alpha } 
= \epsilon \cbra{ \frac{\cG_{\gamma}}{\hat{\cG}_{1-\gamma}} }^{\gamma} 
\label{}
\end{align}
where $ \epsilon $ is a Bernoulli variable: $ P(\epsilon=1) = P(\epsilon=-1) = 1/2 $ 
independent of $ \cG_{\gamma} $ and $ \hat{\cG}_{1-\gamma} $. 
In particular, 
\begin{align}
\frac{\cG_{\gamma}}{\hat{\cG}_{1-\gamma}} 
= |\cC_{\alpha }|^{\alpha } . 
\label{}
\end{align}
}

Note that the law of a standard Cauchy variable $ \cC_2 $ is of {\SD} type. 
Moreover, the following theorem is known: 

\begin{Thm}[Bondesson \cite{MR943583}]
For $ 1<\alpha \le 2 $, 
the law of $ |\cC_{\alpha }| $ is of {\ID} type. 
\end{Thm}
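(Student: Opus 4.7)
My starting point is the identity established just above:
$$|\cC_\alpha|^\alpha \law \cG_\gamma/\hat{\cG}_{1-\gamma}, \qquad \gamma = 1/\alpha \in [1/2,1).$$
By the preceding Bondesson theorem, the quotient $\cG_\gamma/\hat{\cG}_{1-\gamma}$ is of $\GGC$ type, hence of $\ID$ type, for any $\alpha > 1$. The real content of the present theorem is therefore the passage from $\ID$-ness of $|\cC_\alpha|^\alpha$ to $\ID$-ness of $|\cC_\alpha|$ itself, which is delicate because infinite divisibility is not generally preserved under taking $(1/\alpha)$-th roots.

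The natural framework is to write
$$|\cC_\alpha| \law \cG_\gamma^\gamma \cdot \hat{\cG}_{1-\gamma}^{-\gamma}$$
as a product of two independent powers of gamma variables and to invoke Bondesson's analysis of such products, which is precisely what is done in \cite{MR943583}. Alternatively, one can work directly with the L\'evy--Khintchine representation. An elementary computation gives the Mellin transform
$$E[|\cC_\alpha|^s] = \frac{\sin(\pi/\alpha)}{\sin(\pi(s+1)/\alpha)}, \qquad -1 < s < \alpha - 1,$$
and the plan is to invert this by a Mellin--Barnes contour deformation, picking up the residues at the poles $s = k\alpha - 1$ of $1/\sin(\pi(s+1)/\alpha)$, so as to produce a candidate L\'evy density $n_\alpha(u)$ on $(0,\infty)$ with
$$-\log E[\e^{-\lambda|\cC_\alpha|}] = \int_0^\infty (1 - \e^{-\lambda u}) n_\alpha(u) \d u$$
and the usual integrability conditions at $0$ and $\infty$.

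The hard part is the verification of the non-negativity of $n_\alpha$, and this is exactly where the restriction $\alpha \le 2$ enters in a non-cosmetic way: the residues carry trigonometric factors of the form $\sin(k\pi/\alpha)$ whose signs remain controlled only for $\alpha \in (1, 2]$, while for $\alpha > 2$ they alternate and the candidate ceases to be a positive measure. The more structural routes are all blocked: the density $c_\alpha/(1+x^\alpha)$ is not completely monotone (its second derivative changes sign), and one checks by direct calculation that it is not hyperbolically completely monotone in Bondesson's sense either -- for $\alpha = 2$ one has $f(uv)f(u/v) = c_\alpha^2/((1-u^2)^2 + u^2 w^2)$ with $w = v + v^{-1}$, and this is easily seen not to be a completely monotone function of $w$. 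Thus the familiar $\GGC$-type path for $|\cC_\alpha|$ itself is unavailable, and the positivity step above is the essential difficulty.
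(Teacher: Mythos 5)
The paper does not prove this theorem: it is stated with the attribution ``Bondesson \cite{MR943583}'' and no argument is given, so there is no in-paper proof to compare against.

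As for your proposal, it is a research sketch rather than a proof, and it has two genuine gaps. First, there is a conceptual confusion in the central step: Mellin--Barnes inversion of the Mellin transform
$E[|\cC_\alpha|^s]=\sin(\pi/\alpha)/\sin(\pi(s+1)/\alpha)$
recovers (an asymptotic expansion of) the \emph{density} of $|\cC_\alpha|$, not a L\'evy density. The L\'evy density $n_\alpha$ lives on the other side of a logarithm: one would need the Laplace transform $L(\lambda)=E[\e^{-\lambda|\cC_\alpha|}]$, form $-\log L(\lambda)$, and then invert \emph{that}, and none of these quantities is an elementary rewriting of the Mellin transform you computed. You never say how to pass from residues of $E[|\cC_\alpha|^s]$ to a function $n_\alpha$ satisfying $-\log L(\lambda)=\int_0^\infty(1-\e^{-\lambda u})n_\alpha(u)\,\d u$, and the objects you would actually obtain by picking up poles of $1/\sin(\pi(s+1)/\alpha)$ are the coefficients in the expansion of the density $\frac{\alpha\sin(\pi/\alpha)}{\pi}\frac{1}{1+x^\alpha}$ in powers of $x^{-\alpha}$, which is already known. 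Second, and more importantly, you explicitly flag the nonnegativity of the candidate $n_\alpha$ as ``the essential difficulty'' and then stop. That step \emph{is} the theorem; without it nothing has been proved. Your remarks that the density $c_\alpha/(1+x^\alpha)$ is neither completely monotone nor hyperbolically completely monotone are correct and useful as motivation for why the easy sufficient conditions fail, but they close off routes rather than open one. In its present form the proposal identifies the right Mellin transform and the right obstacles, but does not contain a proof of infinite divisibility.
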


It is easy to see that 
\begin{align}
|\cC_{\alpha }| \claw \cU 
\qquad \text{as} \ \alpha \to \infty 
\label{}
\end{align}
where $ \cU $ is a uniform variable on $ (0,1) $. 

\begin{Thm}[Thorin \cite{MR0431333}]
For $ p>0 $, 
the law of $ \cU^{-p} $, 
which is called the {\em Pareto distribution of index $ p $}, 
is of {\GGC} type. 
\end{Thm}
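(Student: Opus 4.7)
The plan is to reduce $\cU^{-p}$ to a variable already known to be of {\GGC} type, namely a beta variable of the second kind, and then to invoke the closure of the {\GGC} class under translation by non-negative constants.

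First, I compute the density of the shifted variable $\cU^{-p}-1$. With $\gamma = 1/p$, the tail computation $P(\cU^{-p} > y) = P(\cU < y^{-\gamma}) = y^{-\gamma}$ for $y > 1$ shows that $\cU^{-p}$ has density $\gamma y^{-\gamma-1}$ on $(1,\infty)$, so $\cU^{-p}-1$ has density $\gamma (1+z)^{-\gamma-1}$ on $(0,\infty)$. Comparing with the density of the beta variable of the second kind $\cG_a/\hat{\cG}_b$, namely $\frac{x^{a-1}}{B(a,b)(1+x)^{a+b}}$, and using $B(1,\gamma) = 1/\gamma$, I recognise that this coincides exactly with the density of $\cG_1/\hat{\cG}_\gamma$; thus
$$\cU^{-p} \law 1 + \frac{\cG_1}{\hat{\cG}_{1/p}}.$$

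By the preceding theorem, $\cG_1/\hat{\cG}_{1/p}$ is of {\GGC} type, with Thorin measure of total mass $1$. The {\GGC} class is trivially closed under addition of a non-negative constant $c$: if $X$ has Laplace exponent $-a\lambda - \int \log(1+\lambda/t)\, U(\d t)$, then $X+c$ has Laplace exponent $-(a+c)\lambda - \int \log(1+\lambda/t)\, U(\d t)$, which is again of {\GGC} form (with drift parameter augmented by $c$ and Thorin measure unchanged). Taking $c = 1$ concludes: $\cU^{-p}$ is of {\GGC} type, with drift $1$ and Thorin measure of total mass $1$.

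There is no genuine obstacle in this deduction: the substantive work, namely the {\GGC}-ness of the beta variable of the second kind, has been granted by the preceding theorem. The Pareto statement then emerges simply as a translation of that result, the only task being the straightforward density identification carried out above.
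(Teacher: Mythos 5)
Your proof is correct, but it does not reproduce the paper's argument for the simple reason that the paper gives none: Thorin's theorem is stated here with only the citation \cite{MR0431333}, no proof being offered. Your route is a reduction to the earlier theorem in this section (the {\GGC} property of $\cG_a/\hat\cG_b$, quoted from \cite[Ex.4.3.1]{MR1224674}), which the paper itself calls ``rather difficult to prove''. The density identification
\begin{align}
\cU^{-p} - 1 \law \frac{\cG_1}{\hat\cG_{1/p}}
\label{}
\end{align}
is correct (the tail $P(\cU^{-p}>y)=y^{-1/p}$, $y>1$, gives density $\gamma(1+z)^{-\gamma-1}$ for the shift, which is the $\cG_1/\hat\cG_\gamma$ density since $B(1,\gamma)=1/\gamma$), and the closure of {\GGC} under addition of a non-negative constant is immediate from the Laplace-exponent characterisation, as you note (a constant $c$ contributes only to the drift $a$, and constants are themselves degenerate {\GGC} laws). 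So logically, within the order of exposition of this paper, your deduction is sound and short, and you even recover the correct drift ($1$) and Thorin mass ($1$). The difference from the historical route is worth flagging, though: Thorin (1977) proved the Pareto case directly, and this result was one of the founding examples that prompted Bondesson's subsequent, more general theory of {\GGC}, of which the $\cG_a/\hat\cG_b$ theorem is an output. Your proof therefore trades self-containedness for brevity — it buys a one-paragraph argument at the cost of invoking the heavier beta-prime theorem, which is the opposite of the original genetic order. Within the present paper that trade is perfectly acceptable, since the beta-prime result is already in hand a few lines above.
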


\begin{Rem}
The following problems still remain open: 
\\ \quad 
{\rm (i)} 
{\it Is it true that the law of $ \cC_{\alpha } $ 
is of {\SD} type (or of {\ID} type at least)?} 
\\ \quad 
{\rm (ii)} 
{\it Is it true that the law of $ |\cC_{\alpha }| $ is of {\SD} type?}
\\ \quad 
{\rm (iii)} 
{\it Is it true that the law of $ |\cC_{\alpha }|^{-p} $ for $ p>0 $ 
is of {\SD} type (or of {\ID} type at least)?}
\end{Rem}

\begin{Rem}
Bourgade--Fujita--Yor (\cite{MR2300217}) 
have proposed a new probabilistic method 
of computing special values of the Riemann zeta function $ \zeta(2n) $ 
via the Cauchy variable. 
Fujita--Y.~Yano--Yor \cite{FYY} 
have recently generalized their method via the $ \alpha $-Cauchy variables 
and obtained a probabilistic method 
for computing special values of the complementary sum of the Hurwitz zeta function: 
$ \zeta(2n,\gamma) + \zeta(2n,1-\gamma) $ for $ 0<\gamma<1 $. 
\end{Rem}

Following \cite{MR1047827}, 
we introduce the {\em Linnik variable} $ \Lambda_{\alpha } $ 
of index $ 0<\alpha \le 2 $ 
as follows: 
\begin{align}
E[ \e^{i \theta \Lambda_{\alpha } } ] = \frac{1}{1+|\theta|^{\alpha }} 
, \qquad \theta \in \bR . 
\label{eq: Linnik1}
\end{align}
It is easy to see that 
\begin{align}
\Lambda_{\alpha } \law X_{\alpha }(\fe) 
\label{}
\end{align}
where 
$ X_{\alpha }=(X_{\alpha }(t): t \ge 0) $ is 
the symmetric stable L\'evy process of index $ \alpha $ starting from 0 
such that 
\begin{align}
P[\e^{i \theta X_{\alpha }(t) }] = \e^{-t|\theta|^{\alpha }} 
, \qquad \theta \in \bR 
\label{eq: char func of X alpha}
\end{align}
and $ \fe $ is a standard exponential variable independent of $ X_{\alpha } $. 
Hence the laws of Linnik variables are of {\SD} type. 
A L\'evy process $ (\Lambda_{\alpha }(t)) $ 
with $ \Lambda_{\alpha }(1) \law \Lambda_{\alpha } $ 
is called a {\em Linnik process}; its characteristic function is: 
\begin{align}
E \ebra{ \e^{i \theta \Lambda_{\alpha }(t)} } 
= \frac{1}{(1+|\theta|^{\alpha })^t} 
, \qquad \theta \in \bR . 
\label{}
\end{align}
See James \cite{J2} for his study of Linnik processes. 
Note that the law of $ \Lambda_{\alpha } $ has a continuous density $ L_{\alpha }(x) $, 
i.e., 
\begin{align}
P(\Lambda_{\alpha } \in \d x) = L_{\alpha }(x) \d x . 
\label{eq: Linnik2}
\end{align}

\begin{Prop}
Suppose that $ 1<\alpha <2 $. Then 
the $ \alpha $-Cauchy distribution and the Linnik distribution of index $ \alpha $ 
satisfy the relation {\bf (R)}. 
\end{Prop}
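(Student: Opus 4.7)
The plan is to note that one direction of relation \textbf{(R)} is essentially immediate from the definitions: by \eqref{eq: Linnik1}, the characteristic function of the Linnik variable $\Lambda_{\alpha}$ is $\theta \mapsto 1/(1+|\theta|^{\alpha})$, which is manifestly proportional, with constant $(2\pi/\alpha)/\sin(\pi/\alpha)$, to the $\alpha$-Cauchy density $x \mapsto \frac{\sin(\pi/\alpha)}{2\pi/\alpha}\cdot\frac{1}{1+|x|^{\alpha}}$. This already settles the statement ``the characteristic function of one of the two distributions is proportional to the density of the other,'' since one such pair suffices; but it is natural to verify the converse proportionality as well.

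For the reverse direction, I would invoke Fourier inversion. Since $1<\alpha<2$, the function $\theta \mapsto 1/(1+|\theta|^{\alpha})$ is integrable on $\bR$, so by \eqref{eq: Linnik2} the continuous density $L_{\alpha}$ of $\Lambda_{\alpha}$ is given by the inversion formula
$$ L_{\alpha}(x) = \frac{1}{2\pi} \int_{\bR} \frac{\e^{-i\theta x}}{1+|\theta|^{\alpha}} \, \d\theta. $$
The integrand is an even function of $\theta$, so $L_{\alpha}$ is an even function of $x$, and $\e^{-i\theta x}$ may be replaced by $\e^{i\theta x}$ above. Computing the characteristic function of $\cC_{\alpha}$ directly from its density then yields
$$ E[\e^{i\theta \cC_{\alpha}}] = \frac{\sin(\pi/\alpha)}{2\pi/\alpha} \int_{\bR} \frac{\e^{i\theta x}}{1+|x|^{\alpha}} \, \d x = \alpha \sin(\pi/\alpha) \, L_{\alpha}(\theta), $$
so the characteristic function of $\cC_{\alpha}$ is indeed proportional to the density of $\Lambda_{\alpha}$.

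The only technical point requiring care is the integrability of $1/(1+|\theta|^{\alpha})$ on $\bR$, which is precisely where the hypothesis $\alpha>1$ enters and legitimizes the use of the Fourier inversion theorem (and why the definition of $\cC_{\alpha}$ was restricted to $\alpha>1$ in the first place); the upper bound $\alpha<2$ comes from the ranges of definition of both the $\alpha$-Cauchy and Linnik families. Beyond this, no serious obstacle is anticipated: the argument is essentially a one-line inspection combined with a standard Fourier-theoretic identity.
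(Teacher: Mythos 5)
Your proof takes essentially the same route as the paper's: apply Fourier inversion to the Linnik characteristic function $1/(1+|\theta|^{\alpha})$ (integrable since $\alpha>1$) and then read off the $\alpha$-Cauchy characteristic function as a constant multiple of $L_{\alpha}(\theta)$, with the evenness of the integrand handling the sign in the exponent. Two small remarks: your constant $\alpha\sin(\pi/\alpha)$ is the correct one, whereas the paper's displayed $\frac{\sin(\pi/\alpha)}{2\pi/\alpha}$ carries an extraneous factor of $2\pi$; and your side comment attributing the restriction $\alpha<2$ to the domains of definition is slightly off, since both families are defined at $\alpha=2$ (giving the classical Cauchy/Laplace pair already mentioned at the start of the subsection), though neither point affects the validity of the argument.
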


\begin{proof}
Note that the identities \eqref{eq: Linnik1} and \eqref{eq: Linnik2} show that 
\begin{align}
\int_{-\infty }^{\infty } \e^{i \theta x} L_{\alpha }(x) \d x 
= \frac{1}{1+|\theta|^{\alpha }} 
, \qquad \theta \in \bR . 
\label{}
\end{align}
By Fourier inversion, we obtain: 
\begin{align}
L_{\alpha }(x) 
= \frac{1}{2 \pi} \int_{-\infty }^{\infty } \e^{- i x \theta } 
\frac{1}{1+|\theta|^{\alpha }} \d \theta 
, \qquad x \in \bR . 
\label{}
\end{align}
Hence: 
\begin{align}
E[ \e^{i \theta \cC_{\alpha }} ] 
= \frac{\sin (\pi/\alpha )}{2 \pi / \alpha } L_{\alpha }(\theta) 
, \qquad \theta \in \bR . 
\label{}
\end{align}
Now the proof is complete. 
\end{proof}

\subsection{Log-gamma processes and their variants}

We recall the classes of log-gamma processes, $ z $-processes and Meixner processes. 

It is well-known (see, e.g., \cite{MR0436267}) 
that the law of the logarithm of a gamma variable $ \log \cG_a $ 
is of {\SD} type. 
Let us introduce a L\'evy process $ (\eta_a(t):t \ge 0) $ such that 
\begin{align}
\eta_a(1) \law \log \cG_a . 
\label{eq: log cGa}
\end{align}
Following Carmona--Petit--Yor \cite{MR1648657}, 
we call the process $ (\eta_a(t):t \ge 0) $ the {\em log-gamma process}. 
Please be careful not to confuse with the convention that 
log-normal variables stand for exponentials of normal variables. 
In \eqref{eq: log cGa}, we simply take the logarithm of a gamma variable. 

The L\'evy characteristics of $ (\eta_a(t):t \ge 0) $ are given as follows. 

\begin{Thm}[see {\cite{MR1648657}} 
and also {\cite{MR2019545}}] 
For any $ a>0 $, the log-gamma process is represented as 
\begin{align}
\eta_a(t) \law 
t \Gamma'(1) + \sum_{j=0}^{\infty } \kbra{ \frac{t}{j+1} - \frac{\gamma^{(j)}(t)}{j+a} } 
\label{eq: loggamma}
\end{align}
where $ \gamma^{(0)},\gamma^{(1)},\ldots $ are independent gamma processes. 
In particular, the L\'evy exponent of $ (\eta_a(t):t \ge 0) $ defined by 
\begin{align}
E \ebra{ \e^{ i \theta \eta_a(t) } } 
= \cbra{ \frac{\Gamma(a+i\theta)}{\Gamma(a)} }^t 
= \e^{ t \phi_a(\theta) } 
\label{}
\end{align}
admits the representation 
\begin{align}
\phi_a(\theta) 
=& \log \frac{\Gamma(a+i\theta)}{\Gamma(a)} 
\\
=& i \theta \psi(a) + \int_{-\infty }^0 \cbra{\e^{i \theta u} - 1 - i \theta u } 
\frac{\e^{-a|u|}}{|u|(1-\e^{-|u|})} \d u 
\label{}
\end{align}
where $ \psi(z) = \Gamma'(z)/\Gamma(z) $ is called the digamma function. 
\end{Thm}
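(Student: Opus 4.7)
The plan is to reduce the two assertions to two classical analytic facts about the $\Gamma$-function: the Weierstrass product
$$ \frac{1}{\Gamma(z)} = z\e^{\gamma z}\prod_{n=1}^{\infty}\Bigl(1+\frac{z}{n}\Bigr)\e^{-z/n}, $$
and Malmsten's (or Binet's) integral representation of the digamma function
$$ \psi(z) = -\gamma + \int_{0}^{\infty}\frac{\e^{-t}-\e^{-zt}}{1-\e^{-t}}\,\d t, \qquad \Re z>0, $$
together with the identity $ \Gamma'(1)=-\gamma $. As a preliminary step I would verify, by the change of variable $ x\mapsto \log x $ in the density of $ \cG_a $, that
$ E[\exp(i\theta\log\cG_a)] = \Gamma(a+i\theta)/\Gamma(a)$, which legitimises reading $ \phi_a(\theta)=\log\Gamma(a+i\theta)-\log\Gamma(a) $ as the L\'evy exponent of the process $ (\eta_a(t)) $.

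For the series representation \eqref{eq: loggamma}, I would apply the Weierstrass product to both $\Gamma(a+i\theta)$ and $\Gamma(a)$ and take the ratio to obtain
$$ \log\frac{\Gamma(a+i\theta)}{\Gamma(a)} = -\gamma i\theta+\sum_{j=0}^{\infty}\kbra{\frac{i\theta}{j+1}+\log\frac{j+a}{j+a+i\theta}}. $$
The $j$-th summand is precisely the log-characteristic function of $ \frac{1}{j+1}-\frac{\gamma^{(j)}(1)}{j+a} $ where $ \gamma^{(j)} $ is a standard gamma subordinator (since $E[\e^{-\lambda\gamma^{(j)}(1)}]=(1+\lambda)^{-1}$), so multiplying everything by $t$ and using independence of the $\gamma^{(j)}$ gives the characteristic function of the right-hand side of \eqref{eq: loggamma}. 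The Weierstrass product converges because of the centring $ i\theta/n $, and this same centring is exactly what produces the deterministic drift $ t/(j+1) $; consequently the random series is almost surely convergent once the drift is included.

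For the L\'evy--Khintchine formula, I would start from Malmsten's integral for $\psi$, subtract $\psi(a)$, and obtain
$$ \psi(a+is)-\psi(a) = \int_{0}^{\infty}\frac{\e^{-at}(1-\e^{-ist})}{1-\e^{-t}}\,\d t. $$
Then I would integrate over $ s\in [0,\theta]$ (with the factor $i$ from $\d w=i\,\d s$ along $w=a+is$), yielding
$$ \log\frac{\Gamma(a+i\theta)}{\Gamma(a)} = i\theta\psi(a)+i\int_{0}^{\theta}\!\int_{0}^{\infty}\frac{\e^{-at}(1-\e^{-ist})}{1-\e^{-t}}\,\d t\,\d s. $$
A Fubini exchange, justified by the bound $|1-\e^{-ist}|\le |s|t$ near $t=0$ and by exponential decay at $+\infty$, followed by explicit evaluation of the inner $s$-integral
$$ i\int_{0}^{\theta}(1-\e^{-ist})\,\d s = i\theta t\cdot\frac{1}{t}-\frac{1-\e^{-i\theta t}}{t}, $$
produces
$$ \log\frac{\Gamma(a+i\theta)}{\Gamma(a)} = i\theta\psi(a)+\int_{0}^{\infty}\bigl(\e^{-i\theta t}-1+i\theta t\bigr)\frac{\e^{-at}}{t(1-\e^{-t})}\,\d t. $$
The substitution $u=-t$ identifies this with the claimed L\'evy--Khintchine representation on $(-\infty,0)$, and simultaneously reads off the L\'evy measure $ \frac{\e^{-a|u|}}{|u|(1-\e^{-|u|})}\d u $ supported on the negative half-line, together with the drift $ \psi(a) $.

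The main technical obstacle lies in the careful handling of convergence: justifying termwise exponentiation of the Weierstrass product (which requires the compensating drift $t/(j+1)$) and the Fubini swap in Step~3 (which requires the singularity of $1/(1-\e^{-t})$ at $t=0$ to be cancelled by the $O(t^{2})$ behaviour of $ \e^{-i\theta t}-1+i\theta t $). Both are controlled by the elementary inequality $ |\e^{ix}-1-ix|\le x^{2}/2 $, which renders the integrand locally integrable near $0$ and uniformly integrable at $\infty$ thanks to the factor $\e^{-a|u|}$.
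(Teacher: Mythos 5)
The paper gives no proof of this theorem; it cites Carmona--Petit--Yor \cite{MR1648657} and Grigelionis \cite{MR2019545}, so there is nothing in the text to compare against. Your blind proof is correct and uses precisely the classical tools those references rely on: the Weierstrass product for $\Gamma$ to read off the series representation \eqref{eq: loggamma}, and Gauss's integral for the digamma function together with a Fubini argument to produce the L\'evy--Khintchine form. All your checks (the $j$-th summand is the log-characteristic function of $\frac{1}{j+1}-\frac{\gamma^{(j)}(1)}{j+a}$; $\Gamma'(1)=-\gamma$; $|\e^{ix}-1-ix|\le x^2/2$ makes the L\'evy density integrable near $0$; the substitution $u=-t$) are correct.

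One small step you elide deserves a line of justification: the Weierstrass product gives, after taking the ratio,
\begin{align}
\log\frac{\Gamma(a+i\theta)}{\Gamma(a)}
= -\gamma i\theta + \log\frac{a}{a+i\theta}
+ \sum_{n=1}^{\infty}\Bigl(\frac{i\theta}{n}+\log\frac{n+a}{n+a+i\theta}\Bigr),
\end{align}
and converting this to the form $-\gamma i\theta+\sum_{j\ge 0}\bigl(\frac{i\theta}{j+1}+\log\frac{j+a}{j+a+i\theta}\bigr)$ that you state requires moving the isolated $\log\frac{a}{a+i\theta}$ into the sum as a $j=0$ term and shifting the harmonic drift from $1/n$ to $1/(j+1)$; this works because both series converge absolutely (each summand is $O(j^{-2})$), and the difference telescopes to $i\theta\sum_{n\ge 1}\bigl(\frac{1}{n+1}-\frac{1}{n}\bigr)=-i\theta$, which exactly cancels the extra $i\theta$ from the new $j=0$ drift term. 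You should spell that out, but there is no gap of substance.
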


Let $ (\eta_a(t):t \ge 0) $ and $ (\hat{\eta}_b(t):t \ge 0) $ 
be independent log-gamma processes. 
Then the difference $ (\eta_a(t)-\hat{\eta}_b(t):t \ge 0) $ 
is called a {\em generalized $ z $-process} 
(see \cite{MR1874897}). 
In particular, we have 
\begin{align}
\eta_a(1)-\hat{\eta}_b(1) 
\law 
\log \frac{\cG_a}{\hat{\cG}_b} 
\label{}
\end{align}
and this law is called a {$ z $-distribution}. 
Its characteristic function is given by 
\begin{align}
E \ebra{ \exp \kbra{ i \theta \log \frac{\cG_a}{\hat{\cG}_b} } } 
= \frac{B(a+i\theta,b-i\theta)}{B(a,b)} 
\label{}
\end{align}
and the law itself is given by 
\begin{align}
P \cbra{ \log \frac{\cG_a}{\hat{\cG}_b} \in \d x } 
= \frac{1}{B(a,b)} \frac{\e^{ax}}{(1+\e^x)^{a+b}} \d x . 
\label{}
\end{align}

\subsection{Symmetric $ z $-processes}

We now consider a particular case of symmetric $ z $-processes, i.e., 
\begin{align}
\sigma_a(t) = \frac{1}{\pi} \kbra{ \eta_a(t) - \hat{\eta}_a(t) } 
, \qquad t \ge 0 . 
\label{}
\end{align}

We introduce a subordinator given by 
\begin{align}
\Sigma_a(t) 
= \frac{2}{\pi^2} \sum_{j=0}^{\infty } \frac{\gamma^{(j)}(t)}{(j+a)^2} 
\label{}
\end{align}
where $ \gamma^{(0)},\gamma^{(1)},\ldots $ are independent gamma processes. 
The L\'evy measure of $ (\Sigma_a(t):t \ge 0) $ 
may be obtained from the following: 
\begin{align}
E \ebra{ \e^{ -\lambda \Sigma_a(t) } } 
=& \prod_{j=0}^{\infty } E \ebra{ \exp \kbra{ -\lambda \frac{2 \gamma^{(j)}(t)}{\pi^2 (j+a)^2} } } 
\\
=& \prod_{j=0}^{\infty } \exp \kbra{ - t \int_0^{\infty } 
\cbra{ 1-\exp \cbra{ - \frac{2 \lambda u}{\pi^2 (j+a)^2} } } \e^{-u} \frac{\d u}{u} } 
\\
=& \prod_{j=0}^{\infty } \exp \kbra{ - t \int_0^{\infty } 
\cbra{ 1-\e^{- \lambda u} } \exp \cbra{- \frac{\pi^2 (j+a)^2}{2} u } \frac{\d u}{u} } 
\\
=& \exp \kbra{ - t \int_0^{\infty } 
\cbra{ 1-\e^{- \lambda u} } n(u) \d u } 
\label{}
\end{align}
where 
\begin{align}
n(u) = \frac{1}{u} \sum_{j=0}^{\infty } 
\exp \cbra{- \frac{\pi^2 (j+a)^2}{2} u } 
= \frac{1}{u} \int_{(0,\infty )} \e^{-ut} U(\d t) 
\label{}
\end{align}
with 
\begin{align}
U = \sum_{j=0}^{\infty } \delta_{\pi^2 (j+a)^2/2} . 
\label{}
\end{align}
Hence we conclude that the law of $ \Sigma_a(t) $ for fixed $ t $ is of {\GGC} type.

The following theorem, due to Barndorff-Nielsen--Kent--S{\o}rensen \cite{MR678296}, 
connects the two L\'evy processes $ \Sigma_a $ and $ \sigma_a $: 

\begin{Thm}[{\cite{MR678296}}; 
see, e.g., {\cite{MR1648657}}] 
The process $ (\sigma_a(t):t \ge 0) $ 
may be obtained as the subordination of a Brownian motion $ (\hat{B}(u):u \ge 0) $ 
with respect to the subordinator $ (\Sigma_a(t):t \ge 0) $: 
\begin{align}
\sigma_a(t) \law \hat{B}(\Sigma_a(t)) 
, \qquad t \ge 0 . 
\label{}
\end{align}
\end{Thm}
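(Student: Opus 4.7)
Both $(\sigma_a(t))_{t\ge 0}$ and $(\hat B(\Sigma_a(t)))_{t \ge 0}$ are L\'evy processes starting at $0$ (the former as the rescaled difference of two log-gamma processes, the latter as a Brownian subordination by an independent subordinator), so the identity in law at each fixed $t$ reduces to the identification of their characteristic functions at an arbitrary $\theta \in \bR$. My plan has three steps.

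First, I compute the left-hand side directly. From $\sigma_a(t) = \pi^{-1}(\eta_a(t) - \hat\eta_a(t))$, the independence of $\eta_a$ and $\hat\eta_a$, and the characteristic function $E[e^{i\theta\eta_a(t)}] = (\Gamma(a+i\theta)/\Gamma(a))^{t}$ recalled above, I obtain
\begin{align}
E[e^{i\theta\sigma_a(t)}]
= \cbra{\frac{\Gamma(a+i\theta/\pi)}{\Gamma(a)}}^{t}\cbra{\frac{\Gamma(a-i\theta/\pi)}{\Gamma(a)}}^{t}
= \cbra{\frac{|\Gamma(a+i\theta/\pi)|^{2}}{\Gamma(a)^{2}}}^{t}.
\label{}
\end{align}

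Second, I compute the right-hand side by subordination. Conditioning on $\Sigma_a(t)$ and using $E[e^{i\theta \hat B(s)}] = e^{-\theta^{2} s/2}$ gives $E[e^{i\theta \hat B(\Sigma_a(t))}] = E[e^{-\theta^{2}\Sigma_a(t)/2}]$. Plugging in the series representation of $\Sigma_a(t)$ from the preceding discussion and using $E[e^{-\lambda \gamma^{(j)}(t)}] = (1+\lambda)^{-t}$ together with the independence of the $\gamma^{(j)}$'s yields the infinite product
\begin{align}
E\ebra{e^{-\theta^{2}\Sigma_a(t)/2}}
= \prod_{j=0}^{\infty} \cbra{1 + \frac{\theta^{2}}{\pi^{2}(j+a)^{2}}}^{-t}.
\label{}
\end{align}

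Third, I match the two expressions by appealing to the Weierstrass product for the reciprocal gamma function. Writing $|\Gamma(a+iy)|^{2} = \Gamma(a+iy)\Gamma(a-iy)$ and expanding both gamma values via $1/\Gamma(z) = z\,e^{\gamma z}\prod_{n\ge 1}(1+z/n)e^{-z/n}$, the Euler--Mascheroni factors pair off and the imaginary shifts $\pm iy/n$ in the exponential correction factors cancel; the ratio with $\Gamma(a)^{2}$ then collapses to
\begin{align}
\frac{|\Gamma(a+iy)|^{2}}{\Gamma(a)^{2}}
= \prod_{j=0}^{\infty} \frac{(j+a)^{2}}{(j+a)^{2}+y^{2}}
= \prod_{j=0}^{\infty} \frac{1}{1+y^{2}/(j+a)^{2}}.
\label{}
\end{align}
Substituting $y=\theta/\pi$ and raising to the power $t$ makes the two characteristic functions coincide, which concludes the proof.

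I do not expect any serious obstacle. The genuinely substantive point is conceptual rather than technical: the Thorin/spectral data $\{\pi^{2}(j+a)^{2}/2\}_{j\ge 0}$ that determine the subordinator $\Sigma_a$ are precisely what the Weierstrass factorisation of $\Gamma$ produces when one computes $|\Gamma(a+iy)|^{2}$. The only mildly delicate bookkeeping is the cancellation of the $e^{\pm z/n}$ convergence factors in the Weierstrass product, which is standard; alternatively one may invoke directly the classical identity $|\Gamma(a+iy)|^{2}/\Gamma(a)^{2} = \prod_{j\ge 0}(1+y^{2}/(j+a)^{2})^{-1}$ and avoid redoing it.
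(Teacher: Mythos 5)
Your proof is correct, but it takes a genuinely different route from the paper's. The paper proves this term by term at the level of the series representations: it writes $\sigma_a(t) = \pi^{-1}\sum_j (\hat\gamma^{(j)}(t)-\gamma^{(j)}(t))/(j+a)$ from the log-gamma decomposition \eqref{eq: loggamma}, observes that each difference $\hat\gamma^{(j)}(t)-\gamma^{(j)}(t)$ of independent gamma processes is a Linnik process of index $2$ and hence is distributed as $\sqrt{2}\,\hat B_j(\gamma^{(j)}(t))$, and then resums to identify the result directly with $\hat B(\Sigma_a(t))$ using the defining series for $\Sigma_a$. You instead compute both one-dimensional characteristic functions in closed form — $(|\Gamma(a+i\theta/\pi)|^{2}/\Gamma(a)^{2})^{t}$ on the left, and the infinite product $\prod_j(1+\theta^{2}/(\pi^{2}(j+a)^{2}))^{-t}$ on the right via conditioning on $\Sigma_a(t)$ — and match them by the classical product
\begin{align}
\frac{|\Gamma(a+iy)|^{2}}{\Gamma(a)^{2}} = \prod_{j\ge 0}\frac{1}{1+y^{2}/(j+a)^{2}},
\end{align}
which indeed follows from either the Euler or the Weierstrass product for $\Gamma$ (the $e^{\gamma(a\pm iy)}$ factors combine to $e^{2\gamma a}$ and then cancel against $1/\Gamma(a)^{2}$, and likewise for the $e^{-(a\pm iy)/n}$ convergence factors; your phrasing ``pair off'' is slightly loose but the bookkeeping is right). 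The paper's route is shorter once one accepts the Linnik-process ingredient $\Lambda_2(t)\law\sqrt{2}\,\hat B(\gamma(t))$ and makes transparent \emph{why} the Thorin data $\{\pi^{2}(j+a)^{2}/2\}$ appear; your route is more self-contained analytically — only the Gamma product and subordination are used — and makes the matching of L\'evy exponents completely explicit. Both proofs are sound.
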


\begin{proof}
By \eqref{eq: loggamma}, 
the process $ (\sigma_a(t):t \ge 0) $ is represented as 
\begin{align}
\sigma_a(t) 
= \frac{1}{\pi} 
\sum_{j=0}^{\infty } \frac{\hat{\gamma}^{(j)}(t) - \gamma^{(j)}(t)}{j+a} 
\label{}
\end{align}
where $ \gamma^{(0)},\gamma^{(1)},\ldots $, $ \hat{\gamma}^{(0)},\hat{\gamma}^{(1)},\ldots $ 
are independent gamma processes. 
Note that 
\begin{align}
\hat{\gamma}^{(j)}(t) - \gamma^{(j)}(t) 
\law \Lambda_2(t) 
\law \sqrt{2} \hat{B}(\gamma(t)) 
\label{}
\end{align}
where $ (\Lambda_2(t):t \ge 0) $ is a Linnik process, 
i.e., a L\'evy process such that $ \Lambda_2(1) \law \Lambda_2 $. 
Now we obtain 
\begin{align}
\hat{B}(\Sigma_a(t)) 
\law& \frac{\sqrt{2}}{\pi} \sum_{j=0}^{\infty } \frac{\hat{B}_j(\gamma^{(j)}(t))}{j+a} 
\label{eq: proof of hatB Sigmaat} \\
\law& \frac{1}{\pi} \sum_{j=0}^{\infty } \frac{\hat{\gamma}^{(j)}(t) - \gamma^{(j)}(t) }{j+a} 
\\
\law& \sigma_a(t) 
\label{}
\end{align}
where on the right hand side of \eqref{eq: proof of hatB Sigmaat} 
the $ \hat{B}_j $'s are independent of $ \gamma^{(j)} $'s. 
The proof is completed. 
\end{proof}

The characteristic function of $ \sigma_a(t) $ is given by 
\begin{align}
E[ \e^{i \theta \sigma_a(t)} ] 
= \cbra{ E \ebra{ \exp \kbra{ i \frac{\theta}{\pi} \log \frac{\cG_a}{\hat{\cG}_a} } } }^t 
= \e^{ - t \Phi_a(\theta) } 
\label{}
\end{align}
where 
\begin{align}
\Phi_a(\theta) 
=& \int_{-\infty }^{\infty } \cbra{ 1-\e^{i \theta u} } 
\frac{\e^{-a \pi |u|}}{|u|(1-\e^{- \pi |u|})} \d u 
\\
=& 2 \int_0^{\infty } \cbra{ 1-\cos \theta u } 
\frac{\e^{-a \pi u}}{u(1-\e^{- \pi u})} \d u . 
\label{eq: definition of Phi}
\end{align}
For $ t=1 $, the law of $ \sigma_a(1) $ is given by 
\begin{align}
P \cbra{ \sigma_a(1) \in \d x } 
= P \cbra{ \frac{1}{\pi} \log \frac{\cG_a}{\hat{\cG}_a} \in \d x } 
= \frac{\pi}{B(a,a)} \frac{\e^{a \pi x}}{(1+\e^{\pi x})^{2a}} \d x . 
\label{}
\end{align}

\begin{Ex}
When $ a=1/2 $, let us denote 
\begin{align}
C_t:= \Sigma_{\frac{1}{2}}(t) 
, \qquad 
\bC_t := \hat{B}(C_t) = \sigma_{\frac{1}{2}}(t) . 
\label{}
\end{align}
The law of $ \bC_1 $ 
is called the {\em hyperbolic cosine distribution}: 
\begin{align}
E[ \e^{i \theta \bC_1} ] 
= E[ \e^{- \frac{1}{2} \theta^2 C_1} ] 
= \frac{1}{\cosh \theta} 
, \qquad 
P \cbra{ \bC_1 \in \d x } 
= \frac{1}{\cosh \pi x} \d x . 
\label{}
\end{align}
Consequently, $ \bC_1 $ and $ \pi \bC_1 $ satisfy the relation {\bf (R)}. 
\end{Ex}

\begin{Ex}
When $ a=1 $, let us denote 
\begin{align}
S_t:= \Sigma_1(t) 
, \qquad 
\bS_t := \hat{B}(S_t) = \sigma_1(t) . 
\label{}
\end{align}
The law of $ \bS_1 $ 
is called the {\em logistic distribution}: 
\begin{align}
E[ \e^{i \theta \bS_1} ] 
= E[ \e^{- \frac{1}{2} \theta^2 S_1} ] 
= \frac{\theta}{\sinh \theta} 
, \qquad 
P \cbra{ \bS_1 \in \d x } 
= \frac{\pi}{(\cosh \pi x)^2} \d x . 
\label{}
\end{align}
Consequently, $ \bS_1 $ and $ \pi \bC_2 $ satisfy the relation {\bf (R)}. 
\end{Ex}

Let us introduce a subordinator $ (T_t) $ and then $ (\bT_t) $ such that 
\begin{align}
\bT_t = \hat{B}(T_t) 
\label{}
\end{align}
and that 
\begin{align}
E[ \e^{i \theta \bT_t} ] 
= E[ \e^{- \frac{1}{2} \theta^2 T_t} ] 
= \cbra{ \frac{\tanh \theta}{\theta} }^t . 
\label{}
\end{align}
It is well-known that the law of $ T_1 $ is of {\ID} type 
and hence that such processes exist. 
Now it is obvious that 
\begin{align}
C_t \law T_t + S_t 
, \qquad 
\bC_t \law \bT_t + \bS_t 
\label{}
\end{align}
where $ (T_t) $ and $ (S_t) $ are independent 
and so are $ (\bT_t) $ and $ (\bS_t) $. 

For further study of these processes $ C_t $, $ S_t $ and $ T_t $, 
see Pitman--Yor \cite{MR1969794}. 
By taking Laplace inversion, 
the density of the law of $ T_1 $ can be obtained in terms of theta function; 
see Knight \cite[Cor.2.1]{MR0253424} for details.

\subsection{Meixner processes}

Let $ \beta \in (-\pi,\pi) $ and let $ (\cM_{\beta}(t):t \ge 0) $ be a L\'evy process 
such that 
\begin{align}
\cM_{\beta}(1) \law \frac{1}{2 \pi} \log \frac{\cG_a}{\hat{\cG}_{1-a}} 
\qquad \text{where} \ 
\beta = (2a-1) \pi . 
\label{}
\end{align}
The law of $ \cM_{\beta }(t) $ for fixed $ t $ 
is called a {\em Meixner distribution} 
because of its close relation to {\em Meixner--Pollaczek polynomials} 
(See \cite{MR1761401}, 
\cite{Sch}, 
\cite{MR1617536} 
and 
\cite{MR1711971}). 
The characteristic function of $ \cM_{\beta}(t) $ is given by 
\begin{align}
E \ebra{ \e^{i \theta \cM_{\beta }(t)} } 
= \cbra{ \frac{\cos \frac{\beta }{2}}{\cosh \frac{\theta - i \beta }{2}} }^t 
= \e^{ t \xi_{\beta}(\theta) } 
\label{}
\end{align}
where 
\begin{align}
\xi_{\beta}(\theta) = \frac{i \theta}{2 \pi} 
\kbra{ \psi \cbra{ \frac{\pi + \beta}{2\pi} }-\psi \cbra{ \frac{\pi - \beta}{2 \pi} } } 
+ \int_{-\infty }^{\infty } \cbra{ \e^{i \theta u} - 1 - i \theta u } 
\frac{\e^{\beta u}}{u \sinh (\pi u)} \d u . 
\label{}
\end{align}
The law of $ \cM_{\beta}(t) $ itself is given by 
\begin{align}
P(\cM_{\beta }(t) \in \d x) 
=& \frac{\cbra{2 \cos \frac{\beta }{2}}^t}{2 \pi \Gamma(t)} 
\e^{\beta x} 
\abra{ \Gamma \cbra{ \frac{t}{2} + ix } }^2 \d x 
\\
=& \frac{\cbra{2 \cos \frac{\beta }{2}}^t}{2 \pi \Gamma(t)} 
\e^{\beta x} 
\Gamma(t/2)^2 \e^{ -\Phi_{t/2}(x) } \d x 
\\
=& \frac{\cbra{2 \cos \frac{\beta }{2}}^t B(t/2,t/2)}{2 \pi } 
\e^{ \beta x - \Phi_{t/2}(x) } 
\d x 
\label{}
\end{align}
where $ \Phi_a(x) $ has been defined in \eqref{eq: definition of Phi}. 

We simply write $ \cM_{\beta} $ for $ \cM_{\beta}(1) $. 
Remark that this Meixner distribution is identical 
to that of the log of an $ \alpha $-Cauchy variable: 
\begin{align}
\cM_{\beta} \law \frac{\alpha }{2 \pi} \log |\cC_{\alpha }| 
\qquad \text{where} \ 
\beta = \cbra{ \frac{2}{\alpha }-1 } \pi . 
\label{}
\end{align}
Remark also that the law of $ \cM_{\beta} $ is symmetric 
if and only if $ \beta=0 $ (or $ a = 1/2 $). 
Then the corresponding Meixner distribution 
is identical to 
the hyperbolic cosine distribution, 
up to the factor $ 1/2 $; precisely: 
\begin{align}
\cM_0 
\law \frac{1}{\pi} \log |\cC| 
\law \frac{1}{2 \pi} \log \frac{\cG_{1/2}}{\hat{\cG}_{1/2}} 
\law \frac{1}{2} \bC_1 . 
\label{}
\end{align}

\subsection{$ \alpha $-Rayleigh distributions}

For an exponential variable $ \fe $, the random variable 
\begin{align}
\cR = \sqrt{2 \fe} 
\label{}
\end{align}
is sometimes called a {\em Rayleigh variable}. 
We shall introduce an $ \alpha $-analogue of the Rayleigh variable. 

Let $ 0<\alpha \le 2 $ and let $ t>0 $. 
By Fourier inversion, 
we obtain from \eqref{eq: char func of X alpha} that 
\begin{align}
P(X_{\alpha }(t) \in \d x) = p^{(\alpha )}_t(x) \d x 
\label{}
\end{align}
where 
\begin{align}
p^{(\alpha )}_t(x) 
= \frac{1}{2\pi} \int_{-\infty }^{\infty } 
\e^{-i x \xi } \e^{-t|\xi|^{\alpha }} \d \xi 
= \frac{1}{\pi} \int_0^{\infty } 
\cos( x \xi ) \e^{-t \xi^{\alpha }} \d \xi . 
\label{}
\end{align}
Note that 
\begin{align}
p^{(\alpha )}_1(0) = \frac{\Gamma(1/\alpha )}{\alpha \pi} . 
\label{eq: p alpha 1 0}
\end{align}

\begin{Lem} \label{lem: K alpha}
Let $ 0 < \alpha \le 2 $. 
Then there exists a non-negative random variable $ \cR_{\alpha } $ such that 
\begin{align}
P( \cR_{\alpha } > x ) = \frac{p^{(\alpha )}_1(x)}{p^{(\alpha )}_1(0)} 
, \qquad x>0 . 
\label{eq: def of cR alpha}
\end{align}
In particular, $ \cR_2 = \sqrt{2} \cR = 2 \sqrt{\fe} $. 
\end{Lem}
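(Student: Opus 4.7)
The plan is to verify that the prescribed formula $F(x) := p^{(\alpha)}_1(x)/p^{(\alpha)}_1(0)$ on $[0,\infty)$ satisfies the three properties required of a survival function of a non-negative random variable: $F(0)=1$, $F$ is non-increasing, and $F(x)\to 0$ as $x\to\infty$. Once these three facts are established, $\cR_{\alpha}$ may be defined as any variable whose law has distribution function $1-F$, and the existence follows from the standard construction of distributions from right-continuous non-increasing tails.

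The normalising factor $p^{(\alpha)}_1(0) = \Gamma(1/\alpha )/(\alpha \pi)$ given in \eqref{eq: p alpha 1 0} is strictly positive, so $F$ is well-defined, continuous, and satisfies $F(0)=1$. The real content of the lemma is monotonicity of $x \mapsto p^{(\alpha)}_1(x)$ on $[0,\infty )$. I would invoke the fact that the symmetric $\alpha $-stable law is self-decomposable (it is even stable), and that by Yamazato's theorem every self-decomposable distribution on $\bR$ is unimodal; by symmetry the mode is necessarily at $0$, hence $x \mapsto p^{(\alpha )}_1(x)$ is non-increasing on $[0,\infty )$ and so is $F$. Since $p^{(\alpha )}_1$ is then a non-increasing, non-negative, integrable function on $[0,\infty )$, it must tend to $0$ at infinity (otherwise its integral would diverge), so $F(x) \to 0$.

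For the second assertion, one simply specialises to $\alpha =2$. The process $X_2$ has characteristic function $\e^{-t\theta^2}$, so $X_2(1) \law \sqrt{2} \, N$ with $N$ standard normal and $p^{(2)}_1(x) = (2\sqrt{\pi})^{-1} \e^{-x^2/4}$. Hence $F(x) = \e^{-x^2/4}$, which means $\cR_2^2/4$ is standard exponential, giving $\cR_2 \law 2\sqrt{\fe} = \sqrt{2}\,\cR$.

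The only non-routine ingredient is the appeal to unimodality; this is the main obstacle to a fully self-contained argument. If one preferred to avoid citing Yamazato (or Wintner's earlier result in the symmetric case), one could instead establish monotonicity directly from the oscillatory integral $p^{(\alpha )}_1(x) = \pi^{-1} \int_0^{\infty } \cos(x\xi) \e^{-\xi^{\alpha }} \d \xi$, for example by writing the cosine transform via a completely monotone mixture; but invoking the classical unimodality theorem is by far the cleanest route.
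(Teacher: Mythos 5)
Your argument is correct, but it takes a genuinely different route from the paper. You establish abstractly that $x\mapsto p^{(\alpha)}_1(x)/p^{(\alpha)}_1(0)$ is a valid survival function on $[0,\infty)$ by invoking Yamazato's unimodality theorem for self-decomposable laws (together with symmetry to place the mode at $0$), then inferring decay at infinity from integrability. The paper instead proceeds constructively: it writes $X_{\alpha}(1)\law \sqrt{2}\,\hat{B}(\cT_{\alpha/2})$ via Brownian subordination, so that
\begin{align}
p^{(\alpha)}_1(x)=E\ebra{\frac{1}{2\sqrt{\pi\cT_{\alpha/2}}}\exp\cbra{-\frac{x^2}{4\cT_{\alpha/2}}}},
\end{align}
whence $p^{(\alpha)}_1(x)/p^{(\alpha)}_1(0)=E[\exp(-x^2/(4\cT'_{\alpha/2}))]=P(2\sqrt{\fe\,\cT'_{\alpha/2}}>x)$ for an independent exponential $\fe$, which is the content of Lemma~\ref{lem: K alpha2}. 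That approach has two advantages over yours: it avoids citing a deep structural theorem, and it delivers the explicit representation $\cR_{\alpha}=2\sqrt{\fe\,\cT'_{\alpha/2}}$, which the paper actually wants (it names this the $\alpha$-Rayleigh variable and uses the representation later, e.g.\ in Theorem~\ref{thm: HSalpha}). Your approach has the merit of handling all $0<\alpha\le 2$ uniformly, whereas the subordination argument requires $\alpha<2$ and the endpoint $\alpha=2$ is treated separately by direct Gaussian computation; you acknowledge the constructive alternative but underestimate it — it is precisely what the paper does, and it yields more than existence. Your $\alpha=2$ specialisation is fine and matches the paper's.
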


We call $ \cR_{\alpha } $ an {\em $ \alpha $-Rayleigh variable} 
and its law the {\em $ \alpha $-Rayleigh distribution}. 

For the proof of Lemma \ref{lem: K alpha}, we introduce some notations. 
For $ 0<\alpha <1 $, we denote by $ \cT_{\alpha} $ 
the {\em unilateral $ \alpha $-stable distribution}: 
\begin{align}
E \ebra{ \e^{- \lambda \cT_{\alpha}} } = \e^{- \lambda^{\alpha }} 
, \qquad \lambda \ge 0 . 
\label{}
\end{align}
We denote by $ \cT'_{\alpha} $ the $ h $-size biased variable of $ \cT_{\alpha } $
with respect to $ h(x) = x^{-1/2} $: 
\begin{align}
E \ebra{ f(\cT'_{\alpha}) } 
= 
\frac{E \ebra{ (\cT_{\alpha} )^{-1/2} f(\cT_{\alpha}) } }
{E \ebra{ (\cT_{\alpha} )^{-1/2} } } 
\label{}
\end{align}
for any non-negative Borel function $ f $. 
The following lemma proves Lemma \ref{lem: K alpha}. 

\begin{Lem} \label{lem: K alpha2}
Let $ 0 < \alpha < 2 $. Then the variable $ \cR_{\alpha } $ is given by 
\begin{align}
\cR_{\alpha } = 2 \sqrt{ \fe \cT'_{\alpha /2} } 
\label{}
\end{align}
where the variables $ \fe $ and $ \cT'_{\alpha /2} $ are independent. 
\end{Lem}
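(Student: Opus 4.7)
The plan is to use the classical subordination representation of the symmetric $\alpha$-stable L\'evy process: for $0<\alpha<2$, one has
\[
X_{\alpha}(1) \law \sqrt{2\,\cT_{\alpha/2}}\cdot N,
\]
where $N$ is a standard normal variable independent of the one-sided $(\alpha/2)$-stable variable $\cT_{\alpha/2}$. This follows directly from the characteristic-function identity $E[e^{-(\theta^2/2)\cdot 2\cT_{\alpha/2}}] = e^{-|\theta|^{\alpha}}$, which matches \eqref{eq: char func of X alpha} at $t=1$.

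The first step is to use the subordination identity to express the density $p_1^{(\alpha)}$ via the Gaussian kernel: for every $x\in\bR$,
\[
p_1^{(\alpha)}(x) = E\!\left[\frac{1}{2\sqrt{\pi\,\cT_{\alpha/2}}}\,\exp\!\cbra{-\frac{x^2}{4\,\cT_{\alpha/2}}}\right].
\]
Specializing at $x=0$ gives $p_1^{(\alpha)}(0) = \frac{1}{2\sqrt{\pi}}\,E[\cT_{\alpha/2}^{-1/2}]$, which is finite (and consistent with \eqref{eq: p alpha 1 0} by the reflection formula for the Gamma function, though that explicit computation is not needed here).

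The second step is to divide the two expressions and recognize the result, by the very definition of the $h$-size biased variable $\cT'_{\alpha/2}$ (with $h(x)=x^{-1/2}$), as a Laplace-type transform:
\[
\frac{p_1^{(\alpha)}(x)}{p_1^{(\alpha)}(0)}
= \frac{E\ebra{\cT_{\alpha/2}^{-1/2}\,\exp\!\cbra{-x^2/(4\cT_{\alpha/2})}}}{E[\cT_{\alpha/2}^{-1/2}]}
= E\ebra{\exp\!\cbra{-\frac{x^2}{4\,\cT'_{\alpha/2}}}}.
\]

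The third step is to match the right-hand side with the tail of $2\sqrt{\fe\,\cT'_{\alpha/2}}$. Conditioning on $\cT'_{\alpha/2}$ and using $P(\fe>t)=e^{-t}$, one obtains
\[
P\!\cbra{2\sqrt{\fe\,\cT'_{\alpha/2}} > x}
= P\!\cbra{\fe > \tfrac{x^2}{4\cT'_{\alpha/2}}}
= E\ebra{\exp\!\cbra{-\frac{x^2}{4\,\cT'_{\alpha/2}}}}.
\]
Comparing with \eqref{eq: def of cR alpha} and the display of step two identifies the law of $\cR_{\alpha}$ with that of $2\sqrt{\fe\,\cT'_{\alpha/2}}$, completing the proof.

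The only non-routine input is recalling (or citing) the subordination representation $X_{\alpha}\law\sqrt{2}\,B\circ\cT_{\alpha/2}$; once this is in hand the argument is a one-line Gaussian computation followed by the $h$-biasing identity and Laplace transform of an exponential variable. Consequently, I expect no serious obstacle — the main point is simply to organize the identification in the correct order: write $p_1^{(\alpha)}$ as a mixture, normalize, biased-variable rewrite, then tail-probability computation.
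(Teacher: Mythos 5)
Your proof is correct and follows essentially the same route as the paper: express $p_1^{(\alpha)}$ as a Gaussian mixture via subordination, normalize to recognize the $h$-size-biased variable $\cT'_{\alpha/2}$, and then read off the tail of $2\sqrt{\fe\,\cT'_{\alpha/2}}$ using an independent exponential. The only cosmetic difference is that you write $X_\alpha(1)\law \sqrt{2\cT_{\alpha/2}}\,N$ where the paper writes $X_\alpha(1)\law \sqrt{2}\,\hat B(\cT_{\alpha/2})$, which is the same thing.
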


\begin{proof}[Proof of Lemma \ref{lem: K alpha2}]
Since we have 
\begin{align}
X_{\alpha }(1) \law \sqrt{2} \hat{B}(\cT_{\frac{\alpha }{2}}) , 
\label{}
\end{align}
we obtain the following expression: 
\begin{align}
p^{(\alpha )}_1(x) 
= E \ebra{ \frac{1}{2 \sqrt{ \pi \cT_{\frac{\alpha }{2}} }} 
\exp \kbra{ -\frac{x^2}{4 \cT_{\frac{\alpha }{2}}} } } . 
\label{}
\end{align}
Hence we obtain 
\begin{align}
\frac{p^{(\alpha )}_1(x)}{p^{(\alpha )}_1(0)} 
= E \ebra{ \exp \kbra{ -\frac{x^2}{4 \cT'_{\frac{\alpha }{2}}} } } . 
\label{}
\end{align}

Using an independent exponential variable $ \fe $, we have 
\begin{align}
\frac{p^{(\alpha )}_1(x)}{p^{(\alpha )}_1(0)} 
= E \ebra{ \fe > \frac{x^2}{ 4 \cT'_{\frac{\alpha }{2}} } } 
= E \ebra{ 2 \sqrt{ \fe \cT'_{\frac{\alpha }{2}} } > x } . 
\label{}
\end{align}
Now the proof is complete. 
\end{proof}

\section{Discussions from excursion theoretic viewpoint} 
\label{sec: exc}

Let us recall It\^o's excursion theory (\cite{MR0402949} and \cite{MR0388551}). 
See also the standard textbooks 
\cite{MR1011252} 
and 
\cite{MR1725357}, 
as well as 
\cite{MR2295611}. 

\subsection{It\^o's measure of excursions away from the origin}

We simply write $ \bD $ for the space $ \bD([0,\infty );\bR) $ 
of c\`adl\`ag paths equipped with Skorokhod topology. 
Let $ X=(X(t):t \ge 0) $ be a strong Markov process with paths in $ \bD $ 
starting from 0. 
Suppose that the origin is regular, recurrent and an instantaneous state. 
Then it is well-known (see \cite[Thm.V.3.13]{MR0264757}) 
that there exists a local time at the origin, 
which we denote by $ L=(L(t):t \ge 0) $, 
subject to the normalization: 
\begin{align}
E \ebra{ \int_0^{\infty } \e^{-t} \d L(t) } = 1 . 
\label{eq: def of local time}
\end{align}
This is a choice made in this section; 
but later, we may make another choice, which will be indicated as $ L^{(\alpha )}(t) $, 
$ L(t) $ being always subject to \eqref{eq: def of local time}. 
The local time process $ L=(L(t):t \ge 0) $ 
is continuous and non-decreasing almost surely. 
Thus its right-continuous inverse process 
\begin{align}
\tau(l) = \inf \{ t>0: L(t)>l \} 
\label{}
\end{align}
is strictly-increasing. 
By the strong Markov property of $ X $, 
we see that $ \tau(l) $ is a subordinator. 
We define a random set $ D $ to be the set of discontinuities of $ \tau $: 
\begin{align}
D = \{ l>0 : \tau(l)-\tau(l-)>0 \} . 
\label{}
\end{align}
It is obvious that $ D $ is a countable set. 
Now we define a point function $ \vp $ on $ D $ 
which takes values in $ \bD $ 
as follows: For $ l \in D $, 
\begin{align}
\vp(l)(t) = 
\begin{cases}
X(t+\tau(l-)) 
\ & \text{if} \ 0 \le t < \tau(l)-\tau(l-) 
, \\
0 & \text{otherwise}. 
\end{cases}
\label{}
\end{align}
We call $ \vp = (\vp(l):l \in D) $ 
the {\em excursion point process}. 
Then the fundamental theorem of It\^o's excursion theory is stated as follows. 

\begin{Thm}[It\^o {\cite{MR0402949}}; see also Meyer \cite{MR0388551}] 
\label{thm: fund thm of exc}
The excursion point process $ \vp $ is a Poisson point process, i.e.: 
\\
{\rm (i)} 
$ \vp $ is $ \sigma $-discrete almost surely, i.e., 
for almost every sample path, 
there exists a sequence $ \{ U_n \} $ of disjoint measurable subsets of $ \bD $ 
such that $ \bD = \cup_n U_n $ and 
$ \{ l \in D: \vp(l) \in U_n \} $ is a finite set for all $ n $; 
\\
{\rm (ii)} 
$ \vp $ is renewal, i.e., 
$ \vp(\cdot \wedge s) $ and $ \vp(\cdot + s) $ 
are independent for each $ s>0 $. 
\end{Thm}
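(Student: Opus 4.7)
The plan is to deduce both assertions from the strong Markov property of $ X $ applied at the stopping times $ \tau(s) $. As a preliminary I would verify that each $ \tau(s) $ is a stopping time for the usual augmentation of the natural filtration of $ X $, and that on $ \{\tau(s)<\infty\} $ we have $ X(\tau(s))=0 $ almost surely. These facts follow from the recurrence and regularity of the origin together with the continuity of $ L $ and the identity $ L(\tau(s))=s $. Consequently the shifted process $ X' := X(\tau(s)+\cdot) $ is, conditionally on $ \mathcal{F}_{\tau(s)} $, an independent copy of $ X $ starting from $ 0 $.

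For the renewal property (ii), denote by $ L' $ and $ \tau' $ the local time and its right-continuous inverse for $ X' $, and by $ \vp' $ the associated excursion point process. From $ L(\tau(s)+t)=s+L'(t) $ one gets $ \tau'(l)=\tau(s+l)-\tau(s) $, whose discontinuity set is $ D'=\{l>0:s+l\in D\} $, and for each such $ l $ one has $ \vp'(l)=\vp(s+l) $. Since $ \vp(\cdot\wedge s) $ is a functional of $ (X(t\wedge\tau(s)))_{t\ge 0} $ and hence $ \mathcal{F}_{\tau(s)} $-measurable, while $ \vp(\cdot+s) $ is a functional of $ X' $, the independence asserted in (ii) is immediate.

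For $ \sigma $-discreteness (i), I would exploit that $ \tau $ is a subordinator: its paths are c\`adl\`ag, and on any compact interval $ (0,m] $ they have only finitely many jumps of size exceeding any fixed $ \varepsilon>0 $. Since the lifetime $ \zeta $ of an excursion satisfies $ \zeta(\vp(l))=\tau(l)-\tau(l-) $ for $ l\in D $, partitioning $ \bD\setminus\{0\} $ by dyadic ranges
\begin{align*}
V_n := \{\omega\in\bD:2^{-(n+1)}<\zeta(\omega)\le 2^{-n}\}\quad(n\ge 0),\qquad V_{-1}:=\{\omega:\zeta(\omega)>1\},
\end{align*}
and intersecting with strips of local time $ (m,m+1] $ for $ m\in\bN $, produces a countable partition $ \{U_{n,m}\} $ of $ \bD $ for which $ \{l\in D:\vp(l)\in U_{n,m}\} $ is finite almost surely.

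The main obstacle is the measurability bookkeeping around $ \tau(s) $: when $ s\notin D $ one must rule out any excursion ``straddling'' the local time $ s $ that could appear in both $ \vp(\cdot\wedge s) $ and $ \vp(\cdot+s) $, and one must check that the natural $ \sigma $-algebras of these two restrictions sit cleanly inside $ \mathcal{F}_{\tau(s)} $ and $ \sigma(X') $ respectively. This is handled by approximating $ s $ by dyadic stopping times $ s_k\downarrow s $ with $ s_k\in D $, using the right-continuity of $ \tau $, and passing to the limit in the Markov decomposition; the Poisson property of $ \vp $ then follows at once, since stationary independent increments with values in $ \bN $ and unit jumps, obtained by applying the above argument to indicator functionals $ 1_A $ with $ A\subset U_{n,m} $, characterise a Poisson process.
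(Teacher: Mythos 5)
The paper itself does not prove Theorem~3.1; it cites It\^o~\cite{MR0402949}, Meyer~\cite{MR0388551}, and the textbooks~\cite{MR1011252},~\cite{MR1725357}. Your overall strategy---deduce (ii) from the strong Markov property at the stopping time $\tau(s)$, and (i) from the fact that $\tau$ is a subordinator whose large jumps are locally finite---is exactly the standard route taken in those references, and your preliminary observations (that $\tau(s)$ is a stopping time, that $X(\tau(s))=0$ a.s., that $L(\tau(s)+\cdot)-s$ is the local time of the shifted process, and the resulting dictionary $D'=\{l:s+l\in D\}$, $\vp'(l)=\vp(s+l)$) are correct and are the heart of the matter. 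So the proposal is on the right track, but two portions are not sound as written.

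First, the $\sigma$-discreteness argument does not produce a partition of $\bD$. The dyadic lifetime classes $V_n=\{u\in\bD: 2^{-(n+1)}<\zeta(u)\le 2^{-n}\}$ are legitimate measurable subsets, but ``intersecting with strips of local time $(m,m+1]$'' makes no sense at the level of the excursion space: the local-time index $l$ is not a property of the path $u\in\bD$, so there is no measurable subset of $\bD$ corresponding to ``$l\in(m,m+1]$,'' and the alleged sets $U_{n,m}$ are undefined. What is true (and is what the subordinator structure actually gives) is that for each fixed $n$ and each bounded interval $(0,m]$ of local time, the set $\{l\in D\cap(0,m]:\vp(l)\in V_n\}$ is a.s.\ finite, because a c\`adl\`ag increasing path has only finitely many jumps of size $>2^{-(n+1)}$ on a compact interval. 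That is the $\sigma$-discreteness in It\^o's sense (counting on $(0,m]\times V_n$); if you wish to match the paper's unqualified phrasing ``$\{l\in D:\vp(l)\in U_n\}$ finite,'' you would have to allow $\omega$-dependent $U_n$ (e.g.\ singletons), at which point the statement is vacuous. Either way, your $U_{n,m}$ construction as stated does not go through; you should simply take $U_n=V_n$ and state the finiteness over bounded local-time intervals.

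Second, the ``approximating $s$ by dyadic stopping times $s_k\downarrow s$ with $s_k\in D$'' is both ill-posed and unnecessary. For a deterministic $s$ and a random countable $D$, there is no canonical way to pick a dyadic $s_k\in D$, and in fact $P(s\in D)=0$. More importantly, no approximation is needed: whether or not $s\in D$, one has $\tau(s-)\le\tau(s)$ with $X(\tau(s))=0$ (since $\tau(s)$ is a right-increase point of $L$), each $\vp(l)$ with $l\le s$ is a functional of $X|_{[0,\tau(s))}$ and hence $\cF_{\tau(s)}$-measurable, and each $\vp(s+l)$ with $l>0$ is a functional of $X'=X(\tau(s)+\cdot)$; the excursion you fear might ``straddle'' the level $s$, namely $\vp(s)$ when $s\in D$, lies entirely in $[\tau(s-),\tau(s))$ and so belongs to the pre-$\tau(s)$ data only. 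The independence in (ii) then drops out of the strong Markov property directly. Finally, the closing sentence about ``stationary independent increments with unit jumps characterising a Poisson process'' is out of scope: the theorem asserts precisely (i) and (ii), and it is the Corollary that follows in the paper which derives the Poisson counting laws; you do not need to (and should not) bundle that into the proof of this theorem.
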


For a measurable subset $ U $ of $ \bD $, 
we define a point process $ \vp_U:D_U \to \bD $ as 
\begin{align}
D_U = \{ l \in D: \vp(l) \in U \} 
\qquad \text{and} \qquad 
\vp_U = \vp |_{D_U} . 
\label{}
\end{align}
We call $ \vp_U = (\vp_U(l):l \in D_U) $ 
the restriction of $ \vp $ on $ U $. 
The measure on $ \bD $ defined by 
\begin{align}
\vn(U) = E \ebra{ \sharp ( (0,1] \cap D_U ) } 
\label{eq: def of Ito measure}
\end{align}
is called {\em It\^o's measure} of excursions. 

\begin{Cor}[It\^o {\cite{MR0402949}}] \label{thm: fund thm of exc2}
The following statements hold: 
\\
{\rm (i)} 
Let $ \{ U_n \} $ be a sequence of disjoint measurable subsets of $ \bD $. 
Then the point processes $ \{ \vp_{U_n} \} $ are independent; 
\\
{\rm (ii)} 
Let $ U $ be a measurable subset of $ \bD $ such that $ \vn(U)<\infty $. 
Then $ (0,l] \cap D_U $ is a finite set for all $ l>0 $ a.s. 
Set 
\begin{align}
D_U = \{ 0<\kappa_1<\kappa_2<\cdots \} 
, \qquad 
\vp_U(\kappa_n) = u_n , \ n = 1,2,\ldots . 
\label{}
\end{align}
Then: 
\\ \quad 
{\rm (ii-a)} 
$ \{ \kappa_n-\kappa_{n-1}, u_n : n=1,2,\ldots \} $ are independent 
where $ \kappa_0=0 $; 
\\ \quad 
{\rm (ii-b)} 
For each $ n $, 
$ \kappa_n-\kappa_{n-1} $ is exponentially distributed with mean $ 1/\vn(U) $, 
i.e., $ P(\kappa_n-\kappa_{n-1}>l) = \e^{-l\vn(U)} $ for $ l>0 $; 
\\ \quad 
{\rm (ii-c)} 
For each $ n $, 
$ P(u_n \in \cdot) = \vn(\cdot \cap U) / \vn(U) $; 
\\
{\rm (iii)} 
Let $ F(l,u) $ be a non-negative measurable functional on $ (0,\infty ) \times \bD $. 
Then 
\begin{align}
E \ebra{ \exp \kbra{ - \sum_{l \in D} F(l,\vp(l)) } } 
= \exp \kbra{ - \int \cbra{ 1-\e^{- F(l,u)} } \d l \otimes \vn(\d u) } ; 
\label{}
\end{align}
\\
{\rm (iv)} 
Let $ F(t,u) $ be a non-negative measurable functional on $ (0,\infty ) \times \bD $. 
Then 
\begin{align}
E \ebra{ \sum_{l \in D} F(\tau(l-),\vp(l)) } 
= \int E[ F(\tau(l),u) ] \d l \otimes \vn(\d u) . 
\label{}
\end{align}
\end{Cor}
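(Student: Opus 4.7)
The plan is to derive all four parts from the Poisson point process property of $\vp$ stated in Theorem~\ref{thm: fund thm of exc}. The key preliminary observation is that for any measurable $U \subset \bD$ the counting process $N_U(l) := \sharp((0,l] \cap D_U)$ has stationary independent increments (by renewal) and jumps only of size one (by $\sigma$-discreteness), hence is a Poisson process on $(0,\infty)$; the definition~\eqref{eq: def of Ito measure} of It\^o's measure then forces its rate to equal $\vn(U)$.

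For (i), disjoint $U_n$ give rise to Poisson processes $N_{U_n}$ with no simultaneous jumps, and such a family is automatically independent (a classical fact for multivariate Poisson processes); this independence lifts to the restrictions $\vp_{U_n}$ themselves because within each $U_n$ the excursion $\vp(l)$ at a jump time is recoverable from $\vp$. For (ii), I would first argue that the planar point process $\{(\kappa_n, u_n)\}$ on $(0,\infty) \times U$ is itself a Poisson point process of finite intensity $dl \otimes \vn(\cdot \cap U)$: stationarity and independence of increments come from renewal, the mean-measure is read off from \eqref{eq: def of Ito measure} applied to $U \cap V$ for arbitrary measurable $V \subset U$, and the absence of simultaneous jumps follows from $\sigma$-discreteness inside $U$. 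Then (ii-a), (ii-b) and (ii-c) are the standard description of a Poisson point process of finite intensity: exponential interarrival times of rate $\vn(U)$, i.i.d.\ marks distributed as $\vn(\cdot \cap U)/\vn(U)$, and independence between marks and interarrival times.

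For (iii), I would verify the exponential formula first for simple $F$ of the form $F(l,u) = \sum_j c_j \mathbf{1}_{A_j}(l) \mathbf{1}_{V_j}(u)$ with bounded $A_j$, disjoint $V_j$ of finite $\vn$-measure, and $c_j \ge 0$: by (i) the contributions over disjoint $V_j$ are independent, and each block reduces via (ii) to a compound Poisson Laplace transform matching the claimed exponential integral. A monotone class / monotone convergence argument then extends the identity to all non-negative measurable $F$. Identity (iv) follows from (iii) by applying it to $\lambda F$ and taking $-\partial/\partial\lambda$ at $\lambda=0$; on the right-hand side one may replace $\tau(l-)$ by $\tau(l)$ since $\tau$ has only countably many discontinuities, so that $\tau(l-)=\tau(l)$ for Lebesgue-a.e.\ $l$.

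The main obstacle is the treatment of sets of infinite $\vn$-measure: $\sigma$-discreteness guarantees $\vn$ is $\sigma$-finite on $\bD$ away from the zero path, but every step above is first formulated for finite-measure sets, so each extension (to arbitrary $U$, to countable disjoint families $\{U_n\}$ in (i), and to general non-negative $F$ in (iii)) requires a careful truncation and monotone convergence argument. Once this $\sigma$-finite bookkeeping is in place, the rest is standard Poisson point process manipulation.
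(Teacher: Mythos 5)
First, a contextual note: the paper does not itself prove this corollary; it defers explicitly to Ikeda--Watanabe and Revuz--Yor. So your attempt must be judged on its own rather than against an in-paper argument.

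Your treatment of (i)--(iii) is broadly reasonable, though in (i) the ``lifting'' from independence of the counting processes $N_{U_n}$ to independence of the marked point processes $\vp_{U_n}$ is stated too tersely: independence of the counting processes says nothing about \emph{which} excursions occur at the jump times. You would need something like (ii) — that, given the jump times, the marks are drawn i.i.d.\ from $\vn(\cdot\cap U_n)/\vn(U_n)$, independently across blocks — or, more cleanly, prove (iii) first and derive (i) by showing that the joint Laplace functional factorizes when $F=\sum_n F_n\mathbf{1}_{U_n}$ over disjoint $U_n$. As written, your order of argument borrows from (ii) before having established it.

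The genuine gap is in (iv). Differentiating (iii) with $F$ replaced by $\lambda F$ at $\lambda=0$ gives the Campbell formula
\begin{align*}
E\ebra{\, \sum_{l\in D} F(l,\vp(l))\, } \;=\; \int F(l,u)\,\d l\otimes\vn(\d u),
\end{align*}
in which the first argument of $F$ is the \emph{deterministic} local-time parameter $l$. In (iv), by contrast, $F$ is evaluated at $(\tau(l-),\vp(l))$, and $\tau(l-)=\sum_{k<l,\,k\in D}\zeta(\vp(k))$ is a \emph{random}, past-measurable quantity; this is not the same object. The exponential formula (iii) is stated only for deterministic functionals $F(l,u)$, and you cannot substitute the predictable random field $(l,u)\mapsto F(\tau(l-)(\omega),u)$ into it. Identity (iv) is precisely the compensation (Mecke) formula for predictable integrands, and its proof needs one more idea: the independence of the excursion $\vp(l)$ from the $\sigma$-field generated by $\{\vp(k):k<l\}$, which lets one pull the past-measurable $\tau(l-)$ outside the conditional expectation of $F(\,\cdot\,,\vp(l))$. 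Your remark that $\tau(l-)$ can be replaced by $\tau(l)$ on the right-hand side because they agree for Lebesgue-a.e.\ $l$ is correct and is needed, but it is the easy part; the hard part — which your differentiation trick does not address — is getting $\tau(l-)$ inside the argument of $F$ in the first place.
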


The proofs of Theorem \ref{thm: fund thm of exc} 
and Corollary \ref{thm: fund thm of exc2} 
are also found in 
\cite{MR1011252} 
and 
\cite{MR1725357}. 

For $ u \in \bD $, define 
\begin{align}
\zeta(u) = \sup \{ t \ge 0: u(t) \neq 0 \} . 
\label{}
\end{align}
For each excursion path $ \vp(l) $, $ l \in D $, 
$ \zeta(\vp(l)) $ is finite and called the {\em lifetime} of the path $ \vp(l) $. 
For a measurable subset $ U $ of $ \bD $, we set 
\begin{align}
\tau_U(l) = \sum_{k \in (0,l] \cap D_U} \zeta(\vp_U(k)) . 
\label{}
\end{align}
Note that $ \tau_{\bD}(l) = \tau(l) $, $ l \ge 0 $. 
By Corollary \ref{thm: fund thm of exc2} (iii), we see that 
the process $ (\tau_U(l):l \ge 0) $ is a subordinator 
with Laplace transform $ E[\e^{-\lambda \tau_U(l)}] = \e^{-l \psi_U(\lambda)} $ 
given by 
\begin{align}
\psi_U(\lambda) = \vn \ebra{ 1-\e^{- \lambda \zeta} ; U } 
, \qquad \lambda>0 . 
\label{}
\end{align}
Since $ \psi(\lambda) := \psi_{\bD}(\lambda) < \infty $, 
we have $ \vn(\zeta>t)<\infty $ for all $ t>0 $; 
in particular, we see that 
the measure $ \vn $ is $ \sigma $-finite.

\subsection{Decomposition of first hitting time before and after last exit time} 
\label{sec: Decomp at LET}

We denote the first hitting time of a closed set $ F $ for $ X $ by 
\begin{align}
T_F(X) = \inf \kbra{ t>0 : X(t) \in F } . 
\label{}
\end{align}
In particular, if $ F=\{ a \} $, the closed set consisting of a single point $ a \in \bR $, 
$ T_F(X) $ is nothing else but 
the first hitting time of {\em point} $ a \in \bR $ for $ X $: 
\begin{align}
T_{\{ a \}}(X) = \inf \kbra{ t>0 : X(t) = a } . 
\label{}
\end{align}
The hitting time $ T_{\{ a \}}(X) $ may be decomposed at the last exit time from 0; 
\begin{align}
T_{\{ a \}}(X) = & G_{\{ a \}}(X) + \Xi_{\{ a \}}(X) 
\label{eq: indep sum of first hitting time}
\end{align}
where $ G_{\{ a \}}(X) $ is the last exit time from 0 before $ T_{\{ a \}}(X) $, 
and where $ \Xi_{\{ a \}}(X) $ is the remainder of time after $ G_{\{ a \}}(X) $, 
i.e., 
\begin{align}
G_{\{ a \}}(X) =& \sup \{ t \le T_{\{ a \}}(X): X(t) = 0 \} 
, \qquad 
\Xi_{\{ a \}}(X) = T_{\{ a \}}(X) - G_{\{ a \}}(X) . 
\label{}
\end{align}
The joint law of the random times $ G_{\{ a \}}(X) $ and $ \Xi_{\{ a \}}(X) $ 
is characterised by the following proposition: 

\begin{Prop} \label{prop: decomp}
Let $ a \neq 0 $. 
Then the random times $ G_{\{ a \}}(X) $ and $ \Xi_{\{ a \}}(X) $ are independent. 
Moreover, the law of $ G_{\{ a \}}(X) $ is of {\ID} type. 
The Laplace transforms of $ G_{\{ a \}}(X) $ and $ \Xi_{\{ a \}}(X) $ are given as 
\begin{align}
E \ebra{ \e^{ - \lambda G_{\{ a \}}(X) } } 
= \kbra{ 1 + 
\frac{ \vn \ebra{ 1-\e^{- \lambda \zeta} ; T_{\{ a \}} > \zeta } }
{ \vn (T_{\{ a \}} < \zeta) } 
}^{-1} 
\label{eq: LT of GaX}
\end{align}
and 
\begin{align}
E \ebra{ \e^{ - \lambda \Xi_{\{ a \}}(X) } } 
= \frac{\vn \ebra{ \e^{- \lambda T_{\{ a \}}} ; T_{\{ a \}} < \zeta }}
{ \vn (T_{\{ a \}} < \zeta) } . 
\label{eq: LT of XiaX}
\end{align}
Consequently, the Laplace transform of $ T_{\{ a \}}(X) $ is given as 
\begin{align}
E \ebra{ \e^{ - \lambda T_{\{ a \}}(X) } } 
= \frac{\vn \ebra{ \e^{- \lambda T_{\{ a \}}} ; T_{\{ a \}} < \zeta }}
{\vn \ebra{ 1- \cbra{ \e^{- \lambda \zeta} \cdot 1_{\{ T_{\{ a \}} > \zeta \}} } } } . 
\label{}
\end{align}
\end{Prop}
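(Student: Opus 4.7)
The plan is to decompose the excursion point process $\vp$ according to whether each excursion reaches the point $a$ before dying, and then to read off all the stated assertions from the Poisson structure of Corollary~\ref{thm: fund thm of exc2}. I would set $U = \{u \in \bD : T_{\{a\}}(u) < \zeta(u)\}$ and $V = \bD \setminus U$, and write $\mu := \vn(U) = \vn(T_{\{a\}} < \zeta)$, which is finite (since $\vn(\zeta>t)<\infty$ for $t>0$) and positive (otherwise $X$ could not hit $a$ in finite time). Enumerating the atoms of $\vp_U$ as $0<\kappa_1<\kappa_2<\cdots$ with marks $u_1,u_2,\ldots$, pathwise one has that every excursion with local time $l \in (0,\kappa_1)$ lies in $V$ and thus avoids $a$, while $u_1$ is the very first excursion that reaches $a$. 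Hence
\begin{align}
G_{\{a\}}(X) = \tau_V(\kappa_1-) = \sum_{l \in (0,\kappa_1) \cap D_V} \zeta(\vp(l))
, \qquad
\Xi_{\{a\}}(X) = T_{\{a\}}(u_1) .
\end{align}

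Independence then follows from two ingredients already supplied by the theory: by Corollary~\ref{thm: fund thm of exc2}\,(i) the point processes $\vp_V$ and $\vp_U$ are independent, and by Corollary~\ref{thm: fund thm of exc2}\,(ii-a) the inter-arrival $\kappa_1$ and the mark $u_1$ inside $\vp_U$ are independent. Combining these, the pair $(\vp_V,\kappa_1)$ (on which $G_{\{a\}}(X)$ depends) is independent of $u_1$ (on which $\Xi_{\{a\}}(X)$ depends). For the Laplace transforms, Corollary~\ref{thm: fund thm of exc2}\,(ii-b,\,ii-c) gives $\kappa_1 \sim \mathrm{Exp}(\mu)$ and $u_1$ distributed as $\vn(\,\cdot \cap U)/\mu$, so that $E[\e^{-\lambda \Xi_{\{a\}}(X)}] = \vn[\e^{-\lambda T_{\{a\}}};\,T_{\{a\}}<\zeta]/\mu$, which is \eqref{eq: LT of XiaX}. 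For $G_{\{a\}}(X)$, I would condition on $\kappa_1$ and use that $\tau_V$ is a subordinator with Laplace exponent $\psi_V(\lambda) = \vn[1-\e^{-\lambda \zeta};\,V]$ independent of $\kappa_1$:
\begin{align}
E\ebra{\e^{-\lambda G_{\{a\}}(X)}}
= E\ebra{\e^{-\kappa_1 \psi_V(\lambda)}}
= \frac{\mu}{\mu + \psi_V(\lambda)} ,
\end{align}
which is exactly \eqref{eq: LT of GaX}. Multiplying the two transforms and using the identity $\mu + \psi_V(\lambda) = \vn[1 - \e^{-\lambda\zeta}\cdot 1_{\{T_{\{a\}}>\zeta\}}]$ yields the final product formula for $T_{\{a\}}(X)$.

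It remains to address the {\ID} property of $G_{\{a\}}(X)$. I would note that $-\log E[\e^{-\lambda G_{\{a\}}(X)}] = \log(1+\psi_V(\lambda)/\mu)$; since $\psi_V$ is a Bernstein function vanishing at $0$, so is $\log(1+\psi_V/\mu)$ (its derivative is the product of the two completely monotone functions $\psi_V'/\mu$ and $(1+\psi_V/\mu)^{-1}$), so $G_{\{a\}}(X)$ is the value at time $1$ of some subordinator. More conceptually, $G_{\{a\}}(X)$ is a subordinator sampled at an independent exponential time, a construction that always produces an {\ID} law. The one technical point that needs care is the first identity $G_{\{a\}}(X) = \tau_V(\kappa_1-)$, which implicitly uses that the inverse local time has no drift, so that $\tau(\kappa_1-)$ coincides with the sum of the lifetimes of the preceding excursions; for the processes ultimately of interest here this is automatic, but it is the only genuine measure-theoretic delicacy in the argument.
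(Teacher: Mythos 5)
Your approach is exactly the paper's: split the excursion point process by whether the excursion reaches $a$, invoke Corollary~\ref{thm: fund thm of exc2}~(i) for the independence of $\vp_U$ and $\vp_V$ and (ii) for the exponential law of $\kappa_1$ and the normalized law of the mark $u_1$, and then read $G_{\{ a \}}(X)$ and $\Xi_{\{ a \}}(X)$ off as functionals of these independent ingredients. Your Laplace computations reproduce \eqref{eq: LT of GaX}--\eqref{eq: LT of XiaX}, and both of your arguments for infinite divisibility of $G_{\{ a \}}(X)$ are sound (indeed more explicit than the paper's one-line assertion, which merely says $G$ is a subordinator run up to an independent exponential time). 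Your remark on the implicit no-drift hypothesis needed for $G_{\{ a \}}(X) = \tau_V(\kappa_1-)$ is a genuine gain in precision: the paper silently writes $\tau_\bD = \tau$, which holds for the symmetric stable processes ultimately in play but is not automatic in the general strong-Markov setup of Section~\ref{sec: exc}; with a drift $d>0$ one would get an extra $d\kappa_1$ term and the formula \eqref{eq: LT of GaX} would acquire a $d\lambda$ in the denominator.

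The one place where your justification does not hold up as written is the finiteness of $\mu = \vn(U) = \vn(T_{\{ a \}} < \zeta)$. The fact that $\vn(\zeta>t)<\infty$ for every $t>0$ does not by itself bound $\vn(U)$: that inference would require $T_{\{ a \}}<\zeta$ to force $\zeta$ above some fixed $\delta>0$, which is not clear for jump processes that can leap directly to a neighbourhood of $a$. The paper instead argues by contradiction: were $\vn(U)$ infinite, the Poisson structure would make $(0,l]\cap D_U$ infinite almost surely for every $l>0$, producing real times $t_n \downarrow 0$ with $X(t_n)=a$, which contradicts $X(0+)=X(0)=0$. You should replace your parenthetical justification by an argument of this kind; everything else in your proof goes through.
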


\begin{proof}
Set 
\begin{align}
U_a = \kbra{ u \in \bD : T_{\{ a \}}(u) < \infty } . 
\label{}
\end{align}
By Corollary \ref{thm: fund thm of exc2} (i), we see that 
$ \vp_{U_a^c} $ and $ \vp_{U_a} $ are independent. 
We remark that $ \vn(U_a) < \infty $; 
in fact, if we supposed otherwise, 
then there would exist a sequence $ \{ t_n \} $ such that $ t_n \to 0 $ decreasingly 
and that $ X(t_n) =a $, 
which contradicts $ X(0+)=X(0)=0 $. 
Set 
\begin{align}
\kappa_a = \inf \{ l>0 : \vp(l) \in U_a \} . 
\label{}
\end{align}
Then, by Corollary \ref{thm: fund thm of exc2} (ii), we see that 
$ \kappa_a $ and $ \vp(\kappa_a) $ are independent. 
Since $ \kappa_a = \inf D_{U_a} $ and $ \vp(\kappa_a) = \vp_{U_a}(\kappa_a) $, 
they are measurable 
with respect to the $ \sigma $-field generated by $ \vp_{U_a} $. 
Hence we see that 
$ \{ \vp_{U_a^c}, \kappa_a , \vp(\kappa_a) \} $ are independent. 
Note that 
\begin{align}
G_{\{ a \}}(X) = \tau_{U_a^c}(\kappa_a) 
\qquad \text{and} \qquad 
\Xi_{\{ a \}}(X) = T_{\{ a \}}(\vp(\kappa_a)) . 
\label{}
\end{align}
Thus we conclude that $ G_{\{ a \}}(X) $ and $ \Xi_{\{ a \}}(X) $ are independent. 
Moreover, we see that the law of $ G_{\{ a \}}(X) $ is of {\ID} type; 
in fact, $ \tau_{U_a^c} $ is a subordinator with Laplace exponent $ \psi_{U_a^c}(\lambda) $ 
and $ \kappa_a $ is an exponential variable with mean $ 1/\vn(U_a) $ 
independent of $ \tau_{U_a^c} $. 
The law of $ \Xi_{\{ a \}}(X) $ is given by 
\begin{align}
P( \Xi_{\{ a \}}(X) \in \cdot ) = \frac{\vn(u \in \bD: T_{\{ a \}}(u) \in \cdot)}{\vn(U_a)} . 
\label{}
\end{align}
Now the proof is completed. 
\end{proof}

\subsection{Excursion durations}

Let us consider the excursion straddling $ t $. 
For a general study in the setup of linear diffusions, see 
\cite{MR1454113}. 

We define the last exit time from 0 before $ t $ 
and the first hitting time of point 0 after $ t $ 
as follows: 
\begin{align}
G_t(X) = \sup \{ s \le t: X(s)=0 \} 
\qquad \text{and} \qquad 
D_t(X) = \inf \{ s > t : X(s)=0 \} . 
\label{}
\end{align}
We define 
\begin{align}
\Xi_t(X) = t - G_t(X) 
\qquad \text{and} \qquad 
\Delta_t(X) = D_t(X) - G_t(X) . 
\label{}
\end{align}
We recall (see \eqref{eq: def of local time}) that 
$ L=(L(t):t \ge 0) $ denotes the local time at 0 of $ X $ 
and $ \tau=(\tau(l):l \ge 0) $ its right-continuous inverse. 
Then we have 
\begin{align}
G_t(X) = \tau(L(t)-) 
, \qquad 
D_t(X) = \tau(L(t)) 
\label{}
\end{align}
and 
\begin{align}
\Xi_t(X) = t - \tau(L(t)-) 
, \qquad 
\Delta_t(X) = \tau(L(t)) - \tau(L(t)-) . 
\label{}
\end{align}

If the local time process $ L $ 
has the self-similarity property with index $ \gamma $: 
\begin{align}
\cbra{ \frac{L(ct)}{c^{\gamma }}:t \ge 0 } \law (L(t):t \ge 0) 
, \qquad c>0 , 
\label{}
\end{align}
then we have 
\begin{align}
\kbra{ \cbra{ \frac{L(ct)}{c^{\gamma }} :t \ge 0 } , 
\cbra{ \frac{\tau(c^{\gamma }l)}{c}:l \ge 0 } } 
\law 
\kbra{ (L(t):t \ge 0) , (\tau(l):l \ge 0) } 
\label{eq: self-similarity}
\end{align}
for any $ c>0 $; 
in particular, 
$ \tau $ is a stable subordinator of index $ \gamma $. 
Hence the index $ \gamma $ must be in $ (0,1) $. 

We now state two explicit results, 
the proofs of which are postponed after commenting about these results. 

\begin{Thm} \label{thm: age and rem}
Suppose that the local time process 
has the self-similarity property of index $ 0<\gamma<1 $. Then 
\begin{align}
\cbra{ \Xi_1(X) , \Delta_1(X) } 
\law 
\cbra{ \cB_{1-\gamma,\gamma} , \frac{\cB_{1-\gamma,\gamma}}{\cU^{\frac{1}{\gamma}}} } 
\label{eq: age and rem}
\end{align}
where $ \cB_{1-\gamma,\gamma} $ is a beta variable of index $ (1-\gamma,\gamma) $ 
and $ \cU $ is an independent uniform variable on $ (0,1) $. 
\end{Thm}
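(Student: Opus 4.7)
The plan is to use \eqref{eq: self-similarity} to identify the inverse local time $\tau$ with a stable subordinator of index $\gamma\in(0,1)$, and then to read off the joint law of $(\Xi_1(X),\Delta_1(X))$ from the compensation (master) formula applied to the jumps of $\tau$.

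First I would observe that $\tau(L(1)-)=G_1(X)$ and $\tau(L(1))=D_1(X)$, so that $l^{\ast}:=L(1)$ is the a.s.\ unique $l>0$ with $\tau(l-)<1<\tau(l)$; in particular $\Xi_1(X)=1-\tau(l^{\ast}-)$ and $\Delta_1(X)=\Delta\tau(l^{\ast})$, reducing the task to finding the joint law of the starting point and size of the unique jump of $\tau$ straddling level $1$. Since this depends only on the range of $\tau$, I may normalise $\tau$ to have Laplace exponent $\lambda^{\gamma}$, with L\'evy measure $\nu(dx)=\frac{\gamma}{\Gamma(1-\gamma)}x^{-1-\gamma}dx$ and potential measure $U(ds)=\frac{1}{\Gamma(\gamma)}s^{\gamma-1}ds$.

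Next, I would apply the compensation formula for the Poisson point process of jumps of $\tau$: for any non-negative measurable $F$,
\[
E\!\left[\sum_{l>0} F(\tau(l-),\Delta\tau(l))\,\mathbf{1}_{\tau(l-)<1<\tau(l)}\right]=\int_0^1 U(ds)\int_{1-s}^{\infty}F(s,x)\,\nu(dx).
\]
Since the sum on the left-hand side has a.s.\ exactly one term, and using $\Gamma(\gamma)\Gamma(1-\gamma)=\pi/\sin(\pi\gamma)$, the joint density of $(G_1(X),\Delta_1(X))$ is
\[
p(s,x)=\frac{\gamma\sin(\pi\gamma)}{\pi}\,s^{\gamma-1}x^{-1-\gamma}\qquad\text{on}\quad\{0<s<1,\ x>1-s\};
\]
a short calculation verifies that this does integrate to $1$, which is a consistency check.

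Finally, I would change variables to $\xi=1-s$, $\ell=x$, so that the joint density of $(\Xi_1(X),\Delta_1(X))$ becomes $\frac{\gamma\sin(\pi\gamma)}{\pi}(1-\xi)^{\gamma-1}\ell^{-1-\gamma}$ on $\{0<\xi<1,\ \ell>\xi\}$. Integrating $\ell$ over $(\xi,\infty)$ identifies the marginal $\Xi_1(X)\law\cB_{1-\gamma,\gamma}$, while the conditional density of $\Delta_1(X)$ given $\Xi_1(X)=\xi$ is $\gamma\xi^{\gamma}\ell^{-1-\gamma}\mathbf{1}_{\ell>\xi}$, which is exactly the law of $\xi/\cU^{1/\gamma}$ for $\cU$ uniform on $(0,1)$. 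This yields \eqref{eq: age and rem}. The main conceptual point, rather than a computational obstacle, is setting up the compensation formula so that the straddling jump is selected cleanly; once that is in place, everything else is a routine beta--gamma calculation in the spirit of Section~\ref{sec: alpha Cauchy}.
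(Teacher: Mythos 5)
Your proof is correct, but it takes a genuinely different computational route from the paper. You reduce the claim to a statement about the subordinator $\tau$ alone, apply the compensation formula for its jump process at the fixed level $t=1$, and read off the joint density of $(G_1(X),\Delta_1(X))$ in closed form before changing variables. The paper instead proceeds through a Stieltjes transform: it evaluates $E\ebra{\int_0^\infty \e^{-pt-q\Xi_t-r\Delta_t}\,\d t}$ via the excursion master formula (Corollary~\ref{thm: fund thm of exc2}(iv)), rewrites the result as $C'\,E\ebra{\cbra{p+q\cB_{1-\gamma,\gamma}+r\cB_{1-\gamma,\gamma}\cU^{-1/\gamma}}^{-1}}$, then uses the scaling $(\Xi_t,\Delta_t)\law(t\Xi_1,t\Delta_1)$ to identify the left side with $E\ebra{(p+q\Xi_1+r\Delta_1)^{-1}}$ and concludes by uniqueness of the Stieltjes transform. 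At bottom both arguments invoke the same compensation formula for Poisson point processes---you apply it to the jumps of $\tau$, the paper to the excursion point process, and since only lifetimes enter, the L\'evy measure of $\tau$ is just the image of $\vn$ under $\zeta$---but your version is shorter and more transparent: it produces the joint density directly, avoiding both the auxiliary averaging over $t$ and the appeal to Stieltjes uniqueness. (The paper's transform method has the side benefit of determining the normalising constant $C'$ without ever computing $C$ or $\psi(1)$, which is why it can stay agnostic about the normalisation of local time; you instead fix the normalisation of $\tau$ explicitly, which is legitimate since $(\Xi_1,\Delta_1)$ depends only on the closed range of $\tau$.) One point worth spelling out: the indicator $\mathbf{1}_{\tau(l-)<1<\tau(l)}$ should be read as $\mathbf{1}_{\tau(l-)<1<\tau(l-)+\Delta\tau(l)}$, a function of the predictable quantity $\tau(l-)$ and the mark $\Delta\tau(l)$, so that the compensation formula applies; and that the sum has a.s.\ exactly one term uses that $1$ is a.s.\ not in the closed range of $\tau$, which holds since the L\'evy measure of $\tau$ is infinite (equivalently, $0$ is regular for itself).
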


The following is a special case of Winkel \cite[Cor.1]{MR2144899}: 

\begin{Thm}[\cite{MR2144899}] \label{thm: age and rem at exp time}
Suppose that the local time process 
has the self-similarity property of index $ 0<\gamma<1 $. 
Let $ \fe $ be an independent exponential time. Then 
\begin{align}
\cbra{ G_{\fe}(X) , \Xi_{\fe}(X) , \Delta_{\fe}(X) } 
\law 
\cbra{ \cG_{\gamma} , \hat{\cG}_{1-\gamma} , 
\frac{\hat{\cG}_{1-\gamma} }{\cU^{\frac{1}{\gamma}}} } 
\label{eq: age and rem at exp time}
\end{align}
where 
$ \cG_{\gamma} $ and $ \hat{\cG}_{1-\gamma} $, respectively, are independent gamma variables 
of indeces $ \gamma $ and $ 1-\gamma $, respectively, 
and $ \cU $ is an independent uniform variable. 
\end{Thm}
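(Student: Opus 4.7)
The plan is to apply It\^o's excursion theory (Corollary \ref{thm: fund thm of exc2}) together with the observation that, under the self-similarity hypothesis \eqref{eq: self-similarity}, $ \tau $ is a stable subordinator of index $ \gamma $. A short computation using the normalization \eqref{eq: def of local time} shows that the Laplace exponent of $ \tau $ is $ \psi(\lambda) = \lambda^{\gamma} $, so that the image of $ \vn $ under $ \zeta $ is the standard $ \gamma $-stable L\'evy measure $ \pi(\d \delta) = \frac{\gamma}{\Gamma(1-\gamma)} \delta^{-1-\gamma} \d \delta $; we may also take $ \fe $ to be a standard exponential. Almost surely $ \fe $ falls in the interior of a unique excursion interval, so that for $ l = L(\fe) $ one has $ G_{\fe}(X) = \tau(l-) $, $ \Delta_{\fe}(X) = \zeta(\vp(l)) $ and $ \Xi_{\fe}(X) = \fe - \tau(l-) $.

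First, for any non-negative measurable $ f $, this description together with the compensation formula of Corollary \ref{thm: fund thm of exc2}~(iv) (integrating $ \fe $ against $ \e^{-t} \d t $ and substituting $ s = t - \tau(l) $) gives
\begin{align*}
E \ebra{ f(G_{\fe}(X), \Xi_{\fe}(X), \Delta_{\fe}(X)) }
= \int_0^{\infty} \d l \int \vn(\d u) \int_0^{\zeta(u)} E \ebra{ \e^{-\tau(l)} f(\tau(l), s, \zeta(u)) } \e^{-s} \d s .
\end{align*}
Invoking the potential measure $ \int_0^{\infty} P(\tau(l) \in \d g) \, \d l = \frac{g^{\gamma-1}}{\Gamma(\gamma)} \d g $ (which follows from $ \int_0^{\infty} \e^{-l \lambda^{\gamma}} \d l = \lambda^{-\gamma} $) together with the identification of $ \vn \circ \zeta^{-1} $ with $ \pi $, the joint density of $ (G_{\fe}(X), \Xi_{\fe}(X), \Delta_{\fe}(X)) $ reduces to
\begin{align*}
\frac{g^{\gamma-1} \e^{-g}}{\Gamma(\gamma)} \cdot \e^{-s} \cdot \frac{\gamma \, \delta^{-1-\gamma}}{\Gamma(1-\gamma)} \cdot 1_{\{ 0 < s < \delta \}} .
\end{align*}

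This factorization will immediately show that $ G_{\fe}(X) $ is independent of $ (\Xi_{\fe}(X), \Delta_{\fe}(X)) $ and distributed as $ \cG_{\gamma} $. A short change of variables $ (s, \delta) \mapsto (s, (s/\delta)^{\gamma}) $, with Jacobian $ \gamma s^{\gamma}/\delta^{\gamma+1} $, then identifies the remaining joint density with that of $ (\hat{\cG}_{1-\gamma}, \hat{\cG}_{1-\gamma}/\cU^{1/\gamma}) $; the marginal of $ \Xi_{\fe}(X) $ comes out automatically as $ \hat{\cG}_{1-\gamma} $. I expect the only delicate point to be the correct use of the compensation formula in the presence of the straddling indicator: under the Palm transform, $ \tau(l-) $ must be replaced by an independent copy of $ \tau(l) $, independent of both the excursion $ u $ and of $ \fe $, which is precisely what Corollary \ref{thm: fund thm of exc2}~(iv) supplies.
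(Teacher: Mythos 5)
Your proof is correct, but it is a genuinely different argument from the one in the paper. The paper obtains Theorem \ref{thm: age and rem at exp time} in three lines as a corollary of Theorem \ref{thm: age and rem}: self-similarity gives $(G_{\fe},\Xi_{\fe},\Delta_{\fe}) \law (\fe G_1,\fe \Xi_1,\fe \Delta_1)$, and then the beta--gamma identity \eqref{eq: Za Zb} converts $(\fe(1-\cB_{1-\gamma,\gamma}),\fe\cB_{1-\gamma,\gamma})$ into $(\cG_\gamma,\hat\cG_{1-\gamma})$, using $G_1 = 1-\Xi_1$. You instead prove the exponential-time statement directly from the master formula, identifying the potential measure $\int_0^\infty P(\tau(l)\in\d g)\,\d l = \frac{g^{\gamma-1}}{\Gamma(\gamma)}\d g$ and the lifetime image $\vn\circ\zeta^{-1} = \frac{\gamma}{\Gamma(1-\gamma)}\delta^{-1-\gamma}\d\delta$, and then reading off the joint density of $(G_\fe,\Xi_\fe,\Delta_\fe)$. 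This is essentially the argument of Winkel's paper \cite{MR2144899}, which the theorem credits. Each approach buys something: your direct route is self-contained and exhibits the clean product form of the joint density (making the independence of $G_\fe$ from $(\Xi_\fe,\Delta_\fe)$ transparent, and also the conditional structure $\Delta_\fe\mid\Xi_\fe$), whereas the paper's route is shorter once Theorem \ref{thm: age and rem} is in hand and emphasizes the relation between the fixed-time and exponential-time pictures via beta--gamma algebra. One small bookkeeping point worth making explicit in your write-up: the normalization \eqref{eq: def of local time} forces $\psi(1)=1$, hence $\psi(\lambda)=\lambda^\gamma$ exactly, and this is what makes both your potential-measure density and the constant $\frac{\gamma}{\Gamma(1-\gamma)}$ in the L\'evy measure come out with no stray factor. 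Your Jacobian for $(s,\delta)\mapsto(s,(s/\delta)^\gamma)$ is also correct: $\abra{\partial u/\partial\delta} = \gamma s^\gamma \delta^{-\gamma-1}$, which turns $\frac{s^{-\gamma}\e^{-s}}{\Gamma(1-\gamma)}\cdot 1_{\{0<u<1\}}$ into $\frac{\gamma \e^{-s}\delta^{-1-\gamma}}{\Gamma(1-\gamma)}1_{\{0<s<\delta\}}$, matching the excursion computation.
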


Generalizing a self-decomposability result of Bondesson (see \cite[Ex.5.6.3]{MR1224674}), 
Bertoin--Fujita--Roynette--Yor \cite[Thm.1.1]{MR2325310} 
and Roynette--Vallois--Yor \cite[Thm.5]{RVY-GGC} 
have recently proved the following: 

\begin{Thm}[\cite{MR2325310} and \cite{RVY-GGC}]
For any $ \gamma \in (0,1) $, 
the laws 
\begin{align}
\frac{\cG_{1-\gamma} }{\cU^{\frac{1}{\gamma}}} 
\qquad \text{and} \qquad 
\cbra{ \frac{1}{\cU^{\frac{1}{\gamma}}}-1 } \cG_{1-\gamma} 
\label{}
\end{align}
are both of {\GGC} type with their Thorin measures having total mass $ 1-\gamma $. 
Here 
$ \cG_{1-\gamma} $ is a gamma variable of index $ 1-\gamma $ 
and $ \cU $ is an independent uniform variable. 
\end{Thm}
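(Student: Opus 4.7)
The plan is to verify the GGC property for each of the two variables by showing that the corresponding Laplace exponent is a Thorin--Bernstein function, and then to read off the total Thorin mass from the asymptotic behaviour of the density at the origin via Theorem~\ref{thm: GGC decomp}.

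First I would compute the two Laplace transforms explicitly. Conditioning on $\cU$, integrating out the gamma factor, and substituting $s=\cU^{1/\gamma}$ give
\begin{align*}
\phi_1(\lambda):=E\ebra{\e^{-\lambda \cG_{1-\gamma}/\cU^{1/\gamma}}}=\gamma\int_0^1(s+\lambda)^{\gamma-1}\,\d s=(1+\lambda)^\gamma-\lambda^\gamma,
\end{align*}
and similarly, after the change of variable $\cU=t^\gamma$,
\begin{align*}
\phi_2(\lambda):=E\ebra{\e^{-\lambda(\cU^{-1/\gamma}-1)\cG_{1-\gamma}}}=\gamma\int_0^1\cbra{t(1-\lambda)+\lambda}^{-(1-\gamma)}\d t=\frac{1-\lambda^\gamma}{1-\lambda}.
\end{align*}

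Second, the heart of the proof is to show that $\psi_i(\lambda):=-\log\phi_i(\lambda)$ is a Thorin--Bernstein function, equivalently that $\psi_i'(\lambda)=-\phi_i'(\lambda)/\phi_i(\lambda)$ is a Stieltjes function. I would extend $\psi_i'$ analytically to $\bC\setminus(-\infty,0]$, check that $\Im\psi_i'(z)\le 0$ for $\Im z>0$, and recover the Thorin measure by Stieltjes inversion
\begin{align*}
U_i(\d t)=-\frac{1}{\pi}\lim_{\eps\downarrow 0}\Im\psi_i'(-t+i\eps)\,\d t,\qquad t>0.
\end{align*}
For $\phi_1$ this forces a branch-cut computation involving $(-t+i0^+)^\gamma$ and $(1-t+i0^+)^\gamma$, whose jumps contribute the factor $\sin(\pi\gamma)$; for $\phi_2$ only $\lambda^\gamma$ carries a cut and the analysis is considerably lighter. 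I expect this step to be the main obstacle: the algebra leading to the expressions of $\phi_1$ and $\phi_2$ is deceptively clean, but the non-positivity of $\Im\psi_i'$ on the upper half-plane is precisely what forces the GGC conclusion and cannot be read off from the Laplace transforms by inspection.

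Finally, once the GGC property is established, the total Thorin mass follows at once from Theorem~\ref{thm: GGC decomp}. A short convolution gives
\begin{align*}
\rho_1(z)=\frac{\gamma}{\Gamma(1-\gamma)}\cdot\frac{1-\e^{-z}}{z^{1+\gamma}},\qquad \rho_2(z)\sim\frac{z^{-\gamma}}{\Gamma(1-\gamma)}\ \text{as}\ z\to 0+,
\end{align*}
so both densities are of order $z^{-\gamma}$ near the origin and formula \eqref{eq: expression of Thorin meas} yields $b_i=\sup\kbra{p\ge 0:\lim_{z\to 0+}\rho_i(z)/z^{p-1}=0}=1-\gamma$. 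As a sanity check, the obvious factorisations $\cG_{1-\gamma}/\cU^{1/\gamma}=\cG_{1-\gamma}\cdot\cU^{-1/\gamma}$ and $(\cU^{-1/\gamma}-1)\cG_{1-\gamma}=\cG_{1-\gamma}\cdot(\cU^{-1/\gamma}-1)$ place the infimum in \eqref{eq: another expression of Thorin measure} at $c\le 1-\gamma$, yielding the matching upper bound $b_i\le 1-\gamma$.
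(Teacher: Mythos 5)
The paper does not prove this theorem at all: it is stated as a citation of Bertoin--Fujita--Roynette--Yor \cite[Thm.1.1]{MR2325310} and Roynette--Vallois--Yor \cite[Thm.5]{RVY-GGC}, so there is no internal argument to compare your attempt against. Evaluating your proposal on its own terms, the parts you actually carry out are correct: the Laplace transforms
$\phi_1(\lambda)=(1+\lambda)^\gamma-\lambda^\gamma$ and $\phi_2(\lambda)=\tfrac{1-\lambda^\gamma}{1-\lambda}$ are right, and so are the density computations, in particular $\rho_1(z)=\tfrac{\gamma}{\Gamma(1-\gamma)}\tfrac{1-\e^{-z}}{z^{1+\gamma}}$ and $\rho_2(z)\sim z^{-\gamma}/\Gamma(1-\gamma)$ as $z\to 0+$, which indeed would yield total mass $1-\gamma$ from \eqref{eq: expression of Thorin meas}. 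The factorisation $X_i=\cG_{1-\gamma}\cdot Y_i$ and \eqref{eq: another expression of Thorin measure} is a correct cross-check.

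However, there is a genuine gap: the one step that actually establishes the \GGC\ property is left undone. You correctly identify that the heart of the matter is showing $\psi_i'=-\phi_i'/\phi_i$ is a Stieltjes function (equivalently, $\Im\psi_i'(z)\le0$ on the open upper half-plane, equivalently that the Stieltjes-inversion measure is nonnegative and satisfies the required integrability), but you only announce this step and flag it as ``the main obstacle'' without carrying it out. This cannot be skipped: Theorem~\ref{thm: GGC decomp} and formula \eqref{eq: expression of Thorin meas} presuppose that $X$ is already of \GGC\ type with $a=0$ and finite Thorin mass, so the density asymptotics and the factorisation argument give you the value of $b$ only conditionally on the \GGC\ property. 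Concretely, for $\phi_2$ you need to check that the rational pole of $1/(1-z)$ at $z=1$ is exactly cancelled by the corresponding singularity of $\gamma z^{\gamma-1}/(1-z^\gamma)$, and then that the boundary values on the cut satisfy
\begin{align*}
-\frac{1}{\pi}\,\Im\psi_2'(-t+i0^+)
=\frac{\gamma\sin(\pi\gamma)}{\pi}\cdot\frac{t^{\gamma-1}}{1-2t^\gamma\cos(\pi\gamma)+t^{2\gamma}}\ \ge\ 0 ,
\end{align*}
with total integral $1-\gamma$ (the substitution $s=t^\gamma$ reduces it to an elementary arctan integral). For $\phi_1$ a similar but slightly more involved two-cut analysis is needed. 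Until this nonnegativity is verified (for instance by producing the explicit Thorin density as above), the claim that the two laws are of \GGC\ type has not been proved, and the remaining steps of your argument rest on an unestablished hypothesis.
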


\begin{Ex}
For a symmetric stable L\'evy process of index $ \alpha $, 
it is well-known (see Kesten \cite{MR0272059} and Bretagnolle \cite{MR0368175}) that 
the origin is regular for itself if and only if $ 1 < \alpha \le 2 $. 
Let $ X_{\alpha } = (X_{\alpha }(t):t \ge 0) $ be the symmetric stable L\'evy process 
of index $ 1 < \alpha \le 2 $. 
Then its local time process is given as 
\begin{align}
L(t) = \lim_{\eps \to 0+} \frac{C}{2 \eps} \int_0^t 1_{\{ |X_{\alpha }(s)| < \eps \}} \d s 
\label{}
\end{align}
for some constant $ C $. 
Since $ X $ satisfies the self-similarity property with index $ 1/\alpha $, 
so does $ L $ with index $ 1-1/\alpha $, 
and hence Theorems \ref{thm: age and rem} and \ref{thm: age and rem at exp time} 
hold with $ \gamma = 1-1/\alpha $. 
\end{Ex}

\begin{Ex}
For a Bessel process of dimension $ d $, 
it is well-known that the origin is regular for itself 
if and only if $ 0<d<2 $. 
Let $ X=(X(t):t \ge 0) $ be a reflecting Bessel process starting from 0 
of dimension $ d = 2 - 2 \alpha $, $ 0<d<2 $ (or $ 0<\alpha <1 $) 
which is scaled so that 
it has natural scale and speed measure $ m(0,x)=x^{\frac{1}{\alpha }-1} $. 
Then its local time process is given as 
\begin{align}
L(t) = \lim_{\eps \to 0+} \frac{C}{m(0,\eps)} 
\int_0^t 1_{\{ |X(s)| < \eps \}} \d s 
\label{}
\end{align}
for some constant $ C $. 
Since $ X $ satisfies the self-similarity property with index $ \alpha $, 
so does $ L $ with the same index $ \alpha $, 
and hence Theorems \ref{thm: age and rem} and \ref{thm: age and rem at exp time} 
hold with $ \gamma = \alpha $. 
For the relations among several choices in the literature, 
see \cite{MR2417969}. 
\end{Ex}

\begin{Ex}
Let $ \alpha >0 $ and $ 0 < \beta < \min \{ 1,1/\alpha \} $ 
and consider the process $ X=X_{m^{(\alpha )},j^{(\beta)},0,0} $ 
given in \cite[Ex.2.4.(b)]{Y2}. 
Then $ X $ satisfies the self-similarity property with index $ \alpha $, 
but this property seems to have nothing to do with the local time $ L $. 
Since its inverse local time process $ \tau = \eta_{m^{(\alpha )},j^{(\beta)},0,0} $ 
satisfies the self-similarity property with index $ 1/(\alpha \beta) $, 
so does $ L $ with index $ \alpha \beta $, 
and hence Theorems \ref{thm: age and rem} and \ref{thm: age and rem at exp time} 
hold with $ \gamma = \alpha \beta $. 
\end{Ex}

\begin{Rem}
The identities in law 
$ \Xi_1(X_{\alpha }) \law \cB_{\gamma,1-\gamma} $ 
and 
$ D_1(X_{\alpha })-1 \law \cG_{\gamma} / \hat{\cG}_{1-\gamma} $ 
are found in Feller \cite[XIV.3]{MR0270403} 
as the long-time limit laws of similar random variables derived from random walks. 
\end{Rem}

Let us prove Theorems \ref{thm: age and rem} 
and \ref{thm: age and rem at exp time} 
for completeness of this paper. 

\begin{proof}[Proof of Theorem \ref{thm: age and rem}]
Since $ \tau(c^{\gamma }l) \law c \tau(l) $ for $ c,l>0 $, 
we have $ \psi(c \lambda) = c^{\gamma} \psi(\lambda) $. 
Hence we obtain 
\begin{align}
\vn(\zeta \in \d t) = C \frac{\d t}{t^{\gamma + 1}} 
\label{}
\end{align}
for some constant $ C $. 
For $ t>0 $, the excursion straddling time $ t $ is $ \vp(L(t)) $. 
Hence we have 
\begin{align}
G_t = \tau(L(t)-) 
, \qquad 
\Xi_t = t - \tau(L(t)-) 
, \qquad 
\Delta_t = \zeta(\vp(L(t))) . 
\label{}
\end{align}
Let $ p,q,r $ be positive constants. 
Then 
\begin{align}
E \ebra{ \int_0^{\infty } \e^{ - pt - q \Xi_t - r \Delta_t } \d t } 
=& E \ebra{ \sum_{l \in D} \int_{\tau(l-)}^{\tau(l)} \e^{ - pt - q \Xi_t - r \Delta_t } \d t } 
\\
=& E \ebra{ \sum_{l \in D} \e^{-p \tau(l-) - r \zeta(\vp(l))} \int_0^{\zeta(\vp(l))} 
\e^{ - pt - q t } \d t } 
\\
=& \int_0^{\infty } E \ebra{ \e^{-p \tau(l)} } \d l 
\int_{\bD} \vn(\d u) \e^{- r \zeta(u)} \int_0^{\zeta(u)} \e^{ - pt - q t } \d t 
\\
=& \int_0^{\infty } \e^{- l \psi(p)} \d l 
\int_0^{\infty } C \frac{\d s}{s^{\gamma+1}} \e^{-rs} \int_0^s \e^{ -pt-qt } \d t 
\\
=& \frac{C}{\psi(p)} 
\int_0^{\infty } \d t \e^{-pt-qt} \int_t^{\infty } \frac{\d s}{s^{\gamma+1}} \e^{-rs} . 
\label{eq: exc dur 1}
\end{align}
Note that 
\begin{align}
\frac{1}{\psi(p)} 
= \frac{1}{\psi(1) p^{\gamma}} 
= \frac{1}{\psi(1) \Gamma(\gamma)} 
\int_0^{\infty } t^{\gamma-1} \e^{-pt} \d t . 
\label{}
\end{align}
Hence we have 
\begin{align}
\text{\eqref{eq: exc dur 1}} 
=& \frac{C}{\psi(1) \Gamma(\gamma)} \int_0^{\infty } \d t \e^{-pt} 
\int_0^t \d v (t-v)^{\gamma-1} 
\e^{-qv} \int_v^{\infty } \frac{\d s}{s^{\gamma+1}} \e^{-rs} 
\\
=& \frac{C}{\psi(1) \Gamma(\gamma)} \int_0^{\infty } \d t \e^{-pt} 
\int_0^1 \d v v^{-\gamma} (1-v)^{\gamma-1} 
\e^{-qvt} \int_1^{\infty } \frac{\d s}{s^{\gamma+1}} \e^{-rsvt} 
\\
=& C' \int_0^{\infty } \d t \e^{-pt} 
E \ebra{ \exp \kbra{ -q \cB_{1-\gamma,\gamma} t 
- r \frac{\cB_{1-\gamma,\gamma}}{\cU^{\frac{1}{\gamma}}} t } } 
\\
=& C' 
E \ebra{ \frac{1}{p + q \cB_{1-\gamma,\gamma} + r \cB_{1-\gamma,\gamma} \cU^{-1/\gamma}} } 
\label{eq: exc dur 2}
\end{align}
for some constant $ C' $. 

On the other hand, 
by the self-similarity property \eqref{eq: self-similarity}, we have 
$ (\Xi_t,\Delta_t) \law (t \Xi_1,t \Delta_1) $ for fixed $ t>0 $, 
and hence we have 
\begin{align}
E \ebra{ \int_0^{\infty } \e^{ - pt - q \Xi_t - r \Delta_t } \d t } 
= E \ebra{ \frac{1}{p + q \Xi_1 + r \Delta_1} } . 
\label{eq: exc dur 3}
\end{align}
Letting $ q,r \to 0+ $ and comparing \eqref{eq: exc dur 2} and \eqref{eq: exc dur 3}, 
we have $ C'=1 $. 
Therefore we obtain the desired identity in law \eqref{eq: age and rem} 
by the uniqueness property of Stieltjes transform. 
\end{proof}

\begin{proof}[Proof of Theorem \ref{thm: age and rem at exp time}]
Note that 
$ (G_{\fe},\Xi_{\fe},\Delta_{\fe}) \law (\fe G_1,\fe \Xi_1,\fe \Delta_1) $ 
by the self-similarity property \eqref{eq: self-similarity}. 
We also note that 
$ (\fe (1-\cB_{1-\gamma,\gamma}),\fe \cB_{1-\gamma,\gamma}) 
\law (\cG_{\gamma},\hat{\cG}_{1-\gamma}) $ 
by the identity in law \eqref{eq: Za Zb}. 
Therefore we obtain the desired identity in law \eqref{eq: age and rem at exp time} 
as an immediate consequence of Theorem \ref{thm: age and rem}. 
\end{proof}

\section{Harmonic transforms of symmetric stable L\'evy processes}
\label{sec: harm}

We keep the notation 
$ X_{\alpha } = (X_{\alpha }(t):t \ge 0) $ 
for the symmetric stable L\'evy process of index $ \alpha $ 
such that 
\begin{align}
P[\e^{i \theta X_{\alpha }(t) }] = \e^{-t|\theta|^{\alpha }} 
, \qquad \theta \in \bR . 
\label{eq: 4.1}
\end{align}
Note that, with \eqref{eq: 4.1}, we have $ X_2(t) \law \sqrt{2} B(t) $. 
We have 
\begin{align}
P(X_{\alpha }(t) \in \d x) = p^{(\alpha )}_t(x) \d x 
\label{}
\end{align}
where 
\begin{align}
p^{(\alpha )}_t(x) 
= \frac{1}{2\pi} \int_{-\infty }^{\infty } 
\e^{-i x \xi } \e^{-t|\xi|^{\alpha }} \d \xi 
= \frac{1}{\pi} \int_0^{\infty } 
\cos( x \xi ) \e^{-t \xi^{\alpha }} \d \xi . 
\label{eq: p alpha}
\end{align}
We suppose that $ 1 < \alpha \le 2 $. Then the Laplace transform 
\begin{align}
u^{(\alpha )}_q(x) 
= \int_0^{\infty } \e^{-qt} p^{(\alpha )}_t(x) \d t 
= \frac{1}{\pi} \int_0^{\infty } 
\frac{\cos( x \xi ) }{q+\xi^{\alpha }} \d \xi 
\label{}
\end{align}
is finite. Define 
\begin{align}
h^{(\alpha )}_q(x) = u^{(\alpha )}_q(0) - u^{(\alpha )}_q(x) 
, \qquad q>0 , \ x \in \bR 
\label{}
\end{align}
and 
\begin{align}
h^{(\alpha )}(x) 
= \lim_{q \to 0+} h^{(\alpha )}_q(x) 
= \lim_{q \to 0+} \{ u^{(\alpha )}_q(0) - u^{(\alpha )}_q(x) \} 
, \qquad x \in \bR . 
\label{eq: hq limit}
\end{align}

\begin{Lem}[See also {\cite[Sec.4.2]{MR2250510}}] 
\label{lem: h-func}
Suppose that $ 1 < \alpha \le 2 $. Then the following assertions hold: 
\\ \quad 
{\rm (i)} 
$ \displaystyle u^{(\alpha )}_q(0) = u^{(\alpha )}_1(0) q^{\frac{1}{\alpha }-1} $ 
for any $ q>0 $ 
where $ u^{(\alpha )}_1(0) 
= \frac{1}{\alpha \pi} \Gamma(1-\frac{1}{\alpha }) \Gamma(\frac{1}{\alpha }) $; 
\\ \quad 
{\rm (ii)} 
$ \displaystyle 
h^{(\alpha )}(x) = h^{(\alpha )}(1) |x|^{\alpha -1} $ 
for any $ x \in \bR $ where 
$ h^{(\alpha )}(1) = \{ 2 \Gamma(\alpha ) \sin \frac{(\alpha -1)\pi}{2} \}^{-1} $; 
\\ \quad 
{\rm (iii)} 
$ \displaystyle \lim_{q \to 0+} \frac{u^{(\alpha )}_q(x)}{u^{(\alpha )}_q(0)} = 1 $ 
for any $ x \in \bR $. 
\end{Lem}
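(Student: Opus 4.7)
The plan is to handle the three parts in sequence: (i) by a clean scaling change of variables, (ii) by passing to the limit and evaluating a Mellin-type integral, and (iii) as an immediate consequence of the first two.

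For (i), I would substitute $\xi = q^{1/\alpha}\eta$ in $u^{(\alpha)}_q(0) = \frac{1}{\pi}\int_0^\infty \frac{\d\xi}{q+\xi^\alpha}$; the Jacobian combined with the factorization $q+\xi^\alpha = q(1+\eta^\alpha)$ pulls out $q^{1/\alpha-1}$ and reduces the claim to computing $u^{(\alpha)}_1(0) = \frac{1}{\pi}\int_0^\infty \frac{\d\eta}{1+\eta^\alpha}$. The further substitution $\eta^\alpha = t/(1-t)$ turns this into a beta integral and yields $\frac{1}{\alpha\pi}B(1/\alpha,\,1-1/\alpha) = \frac{1}{\alpha\pi}\Gamma(1/\alpha)\Gamma(1-1/\alpha)$.

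For (ii), I start from the identity
\[
h^{(\alpha)}_q(x) = \frac{1}{\pi}\int_0^\infty \frac{1-\cos(x\xi)}{q+\xi^\alpha}\,\d\xi,
\]
obtained by subtracting the defining integrals of $u^{(\alpha)}_q(x)$ and $u^{(\alpha)}_q(0)$. I pass to the limit $q\to 0+$ by monotone convergence; the limiting integrand is integrable because $1-\cos(x\xi)=O((x\xi)^2)$ near $\xi=0$ (so it is $O(\xi^{2-\alpha})$, integrable since $\alpha\le 2$) and it is $O(\xi^{-\alpha})$ at infinity (integrable since $\alpha>1$). The scaling $u=|x|\xi$ then extracts a factor $|x|^{\alpha-1}$, reducing the problem to evaluating $\int_0^\infty (1-\cos u)u^{-\alpha}\,\d u$. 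Integration by parts (the boundary terms vanish thanks to $1<\alpha\le 2$) turns this into $(\alpha-1)^{-1}\int_0^\infty u^{1-\alpha}\sin u\,\d u$; the classical Mellin identity $\int_0^\infty u^{s-1}\sin u\,\d u = \Gamma(s)\sin(\pi s/2)$ applied with $s=2-\alpha$ then gives
\[
\int_0^\infty \frac{1-\cos u}{u^\alpha}\,\d u = \frac{\Gamma(2-\alpha)\sin(\pi\alpha/2)}{\alpha-1} = -\Gamma(1-\alpha)\sin(\pi\alpha/2).
\]
A joint use of Euler's reflection $\Gamma(\alpha)\Gamma(1-\alpha)\sin(\pi\alpha)=\pi$, the double-angle identity $\sin(\pi\alpha)=2\sin(\pi\alpha/2)\cos(\pi\alpha/2)$, and the shift $\cos(\pi\alpha/2)=-\sin((\alpha-1)\pi/2)$ recasts the right-hand side as $\pi/\{2\Gamma(\alpha)\sin((\alpha-1)\pi/2)\}$, yielding the stated value of $h^{(\alpha)}(1)$.

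For (iii), I write $u^{(\alpha)}_q(x)/u^{(\alpha)}_q(0) = 1 - h^{(\alpha)}_q(x)/u^{(\alpha)}_q(0)$. By (ii) the numerator converges to the finite constant $h^{(\alpha)}(x)$, while by (i) the denominator equals $u^{(\alpha)}_1(0)\,q^{1/\alpha-1}$, which blows up as $q\to 0+$ because $\alpha>1$; hence the ratio tends to $1$. The main obstacle is the final trigonometric--gamma reduction in step (ii): the Mellin computation naturally produces $\Gamma(1-\alpha)\sin(\pi\alpha/2)$, and recasting this as $\{2\Gamma(\alpha)\sin((\alpha-1)\pi/2)\}^{-1}$ requires a careful joint application of the reflection and double-angle formulas with attention to signs --- note that $\Gamma(1-\alpha)<0$ for $\alpha\in(1,2)$, whereas the target expression is manifestly positive. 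Everything else is routine stable-process scaling.
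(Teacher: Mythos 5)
Your proof is correct. Parts (i) and (iii) are essentially what the paper does: the paper declares (i) ``obvious by definition'' (you fill in the scaling substitution and the beta integral for $u^{(\alpha)}_1(0)$, which is exactly what is implicit), and for (iii) the paper uses the identical rewriting $u^{(\alpha)}_q(x)/u^{(\alpha)}_q(0) = 1 - h^{(\alpha)}_q(x)/u^{(\alpha)}_q(0)$ with $u^{(\alpha)}_q(0)\to\infty$.

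For the constant in (ii) your route is genuinely different from the paper's. The paper defers the evaluation of $\frac{1}{\pi}\int_0^\infty (1-\cos\xi)\xi^{-\alpha}\,\d\xi$ to its Appendix (Proposition~\ref{prop: const}), where it starts from the gamma integral $\int_0^\infty x^{\gamma-1}\e^{-zx}\,\d x = \Gamma(\gamma)z^{-\gamma}$ with complex $z=\eps-i\lambda$, extends the range of $\alpha$ by analytic continuation, lets $\eps\to 0+$ via a contour, takes real parts, and finally applies the reflection formula. You instead integrate by parts to reduce to $(\alpha-1)^{-1}\int_0^\infty u^{1-\alpha}\sin u\,\d u$ and invoke the Mellin identity $\int_0^\infty u^{s-1}\sin u\,\d u = \Gamma(s)\sin(\pi s/2)$ with $s=2-\alpha$, then finish with the same reflection/double-angle algebra. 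Both are sound; your version is shorter if the Mellin transform of $\sin$ is taken as known (and the paper's argument is, in effect, a self-contained proof of exactly that identity). Two small points worth noting: the integrability near $\xi=0$ that you cite as ``$\alpha\le 2$'' really needs only $\alpha<3$; and at $\alpha=2$ the Mellin identity is of the form $0\cdot\infty$ --- the product $\Gamma(s)\sin(\pi s/2)\to\pi/2$ as $s\to 0$, so the formula extends by continuity and your derivation still gives $h^{(2)}(1)=1/2$, but it would be cleaner to flag this degenerate case explicitly. Your sign bookkeeping ($\Gamma(1-\alpha)<0$ on $(1,2)$, $\cos(\pi\alpha/2)=-\sin\frac{(\alpha-1)\pi}{2}$) is correct and matches the paper's final simplification.
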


\begin{proof}
The assertion (i) is obvious by definition. 
It is also obvious that 
\begin{align}
h^{(\alpha )}(x) 
= \frac{1}{\pi} \int_0^{\infty } \frac{1-\cos (x \xi)}{\xi^{\alpha }} \d \xi 
= h^{(\alpha )}(1) |x|^{\alpha -1} 
, \qquad x \in \bR . 
\label{}
\end{align}
For the computation: 
\begin{align}
h^{(\alpha )}(1) 
= \frac{1}{\pi} \int_0^{\infty } \frac{1-\cos \xi}{\xi^{\alpha }} \d \xi 
= \kbra{ 2 \Gamma(\alpha ) \sin \frac{(\alpha -1)\pi}{2} }^{-1} , 
\label{}
\end{align}
see Proposition \ref{prop: const} in the Appendix. 
Hence we obtain (ii). 
We obtain the assertion (iii) noting that 
\begin{align}
\frac{u^{(\alpha )}_q(x)}{u^{(\alpha )}_q(0)} 
= 1 - \frac{h^{(\alpha )}_q(x)}{u^{(\alpha )}_q(0)} 
\ \stackrel{q \to 0+}{\longrightarrow} \ 
1 . 
\label{}
\end{align}
\end{proof}

Let $ (L^{(\alpha )}(t):t \ge 0) $ be the unique local time process such that 
\begin{align}
L^{(\alpha )}(t) = \lim_{\eps \to 0+} \frac{1}{2 \eps} 
\int_0^t 1_{\{ |X_{\alpha }(s)| < \eps \}} \d s 
\qquad \text{a.s.} 
\label{eq: occ density}
\end{align}
Then it is well-known (see \cite[Lemma V.1.3]{MR1406564}) that 
\begin{align}
E \ebra { \int_0^{\infty } \e^{-t} \d L^{(\alpha )}(t) } = u^{(\alpha )}_1(0) . 
\label{}
\end{align}
Let $ \vn^{(\alpha )} $ denote It\^o's measure for the process $ X_{\alpha } $ 
corresponding to this normalisation of the local time $ (L^{(\alpha )}(t):t \ge 0) $. 
Remark that 
\begin{align}
L^{(\alpha )}(t) = u^{(\alpha )}_1(0) L(t) 
\quad (t \ge 0) 
, \qquad 
\vn^{(\alpha )} = \frac{1}{u^{(\alpha )}_1(0)} \vn 
\label{}
\end{align}
where $ (L(t):t \ge 0) $ and $ \vn $, respectively, are as defined 
by \eqref{eq: def of local time} and \eqref{eq: def of Ito measure}, respectively.

\begin{Thm}[\cite{YYY} and \cite{Y}] \label{thm: Y}
Suppose that $ 1<\alpha \le 2 $. Then 
\begin{align}
\vn^{(\alpha )}[h^{(\alpha )}(X(t)) ; \zeta>t] = 1 
, \qquad t>0 . 
\label{}
\end{align}
Consequently, 
there exists a unique probability measure $ P^{h^{(\alpha )}} $ on $ \bD $ such that 
\begin{align}
E^{h^{(\alpha )}}[Z_t] = \vn^{(\alpha )} [Z_t h^{(\alpha )}(X(t)) ; \zeta>t ] 
\label{}
\end{align}
for any $ t>0 $ 
and for any non-negative or bounded $ \cF_t $-measurable functional $ Z_t $. 
\end{Thm}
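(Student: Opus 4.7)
The plan is to establish the identity $\vn^{(\alpha)}[h^{(\alpha)}(X(t)); \zeta>t] = 1$ via Laplace transforms in $t$ and then to deduce the existence of $P^{h^{(\alpha)}}$ by a standard $h$-transform/Kolmogorov extension argument. With the normalisation $\vn^{(\alpha)} = \vn/u^{(\alpha)}_1(0)$ chosen just before the theorem, the inverse local time of $L^{(\alpha)}$ is a subordinator with Laplace exponent $\psi^{(\alpha)}(q) = 1/u^{(\alpha)}_q(0)$. Combining this with the compensation formula of Corollary \ref{thm: fund thm of exc2}(iii)--(iv), one obtains, for any non-negative Borel $f$,
\begin{equation*}
\int_0^{\infty} e^{-qt} \vn^{(\alpha)}[f(X(t)); \zeta>t]\, dt
= \frac{1}{u^{(\alpha)}_q(0)} \int_{\bR} f(y)\, u^{(\alpha)}_q(y)\, dy.
\end{equation*}
Applied with $f = h^{(\alpha)}$, the stated identity is therefore equivalent to showing
\begin{equation*}
\int_{\bR} h^{(\alpha)}(y)\, u^{(\alpha)}_q(y)\, dy = \frac{u^{(\alpha)}_q(0)}{q}
\qquad (q>0).
\end{equation*}

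The key integral I would evaluate by monotone approximation $h^{(\alpha)}(y) = \lim_{q' \to 0+} h^{(\alpha)}_{q'}(y)$: from $h^{(\alpha)}_{q'}(y) = \pi^{-1}\int_0^\infty (1-\cos(y\xi))/(q'+\xi^\alpha)\, d\xi$ the convergence is monotone increasing as $q' \downarrow 0$, so monotone convergence applies. For each $q'>0$, split
\begin{equation*}
\int h^{(\alpha)}_{q'}(y) u^{(\alpha)}_q(y)\, dy
= u^{(\alpha)}_{q'}(0) \int u^{(\alpha)}_q(y)\, dy
- \int u^{(\alpha)}_{q'}(y) u^{(\alpha)}_q(y)\, dy,
\end{equation*}
use $\int u^{(\alpha)}_q(y)\, dy = 1/q$, and invoke the resolvent equation evaluated at $0$ to get $\int u^{(\alpha)}_{q'}(y) u^{(\alpha)}_q(y)\, dy = (u^{(\alpha)}_{q'}(0) - u^{(\alpha)}_q(0))/(q-q')$. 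An algebraic rearrangement gives
\begin{equation*}
\int h^{(\alpha)}_{q'}(y) u^{(\alpha)}_q(y)\, dy = -\frac{q'\, u^{(\alpha)}_{q'}(0)}{q(q-q')} + \frac{u^{(\alpha)}_q(0)}{q-q'}.
\end{equation*}
By Lemma \ref{lem: h-func}(i), $q'\, u^{(\alpha)}_{q'}(0) = u^{(\alpha)}_1(0)(q')^{1/\alpha} \to 0$ as $q' \to 0+$, so the limit is exactly $u^{(\alpha)}_q(0)/q$, which is what we needed. Laplace inversion, together with right-continuity in $t$ of $\vn^{(\alpha)}[h^{(\alpha)}(X(t)); \zeta>t]$ (a standard consequence of the regularity of the entrance law $q_t(y)$ of $\vn^{(\alpha)}$), upgrades the identity from a.e.~$t>0$ to every $t>0$.

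For the \emph{consequently} statement, I would define, for each $t>0$, a probability measure on $\cF_t$ by $dP^{h^{(\alpha)}}|_{\cF_t} = h^{(\alpha)}(X(t))\, d\vn^{(\alpha)}|_{\{\zeta>t\}}$; the first part of the theorem says this has total mass one. Consistency across $t$ (needed for Kolmogorov's extension theorem and also for uniqueness) reduces, by the Markov property of $\vn^{(\alpha)}$ after time $0$, to the harmonicity identity $E_y[h^{(\alpha)}(X_\alpha(r)); T_{\{0\}}(X_\alpha) > r] = h^{(\alpha)}(y)$ for $y \ne 0$ and $r > 0$. This is the classical harmonicity of the Riesz kernel $|y|^{\alpha-1}$ for the symmetric $\alpha$-stable process killed at the origin, and it can also be read off a posteriori from the main identity by applying the Markov property of $\vn^{(\alpha)}$ to $\vn^{(\alpha)}[h^{(\alpha)}(X(s+r));\zeta>s+r] = 1 = \vn^{(\alpha)}[h^{(\alpha)}(X(s));\zeta>s]$ and using strict positivity of $q_s(y)$ on $\bR\setminus\{0\}$. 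The main obstacle in the whole argument is the evaluation of $\int h^{(\alpha)}(y) u^{(\alpha)}_q(y)\, dy$: the integrand grows like $|y|^{\alpha-1}$, so one must handle the divergence carefully, and the fortunate cancellation of the $u^{(\alpha)}_{q'}(0)$ term via the resolvent identity is what makes the whole scheme work.
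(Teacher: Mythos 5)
The paper does not actually prove Theorem \ref{thm: Y} here; it refers the reader to \cite[Thm.~4.7]{YYY} and \cite[Thm.~1.2]{Y}, so there is no in-paper proof to compare against. Read on its own merits, your blind argument is essentially correct and gives a clean, self-contained route. The backbone computation checks out: the excursion-theoretic identity
\begin{equation*}
\int_0^{\infty} e^{-qt}\, \vn^{(\alpha)}\bigl[f(X(t));\ \zeta>t\bigr]\, \mathrm{d}t
= \frac{1}{u^{(\alpha)}_q(0)} \int_{\bR} f(y)\, u^{(\alpha)}_q(y)\, \mathrm{d}y
\end{equation*}
does follow from Corollary \ref{thm: fund thm of exc2}(iv) applied to $\tau^{(\alpha)}$, using $E[\exp(-q\tau^{(\alpha)}(l))] = \exp(-l/u_q^{(\alpha)}(0))$, and your evaluation
\begin{equation*}
\int h_{q'}^{(\alpha)}(y)\, u_q^{(\alpha)}(y)\, \mathrm{d}y
= -\frac{q'\, u^{(\alpha)}_{q'}(0)}{q(q-q')} + \frac{u^{(\alpha)}_q(0)}{q-q'}
\ \xrightarrow[q'\to 0+]{}\ \frac{u^{(\alpha)}_q(0)}{q}
\end{equation*}
is correct (the resolvent equation in convolution form at the origin, $u^{(\alpha)}_q(0) - u^{(\alpha)}_{q'}(0) = (q'-q)\int u_q^{(\alpha)} u_{q'}^{(\alpha)}$, and $q' u^{(\alpha)}_{q'}(0) = u^{(\alpha)}_1(0)(q')^{1/\alpha}\to 0$ by Lemma \ref{lem: h-func}(i) both hold). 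Two steps are invoked rather than proved and would need a line or two of justification in a full write-up: \textbf{(a)} the regularity in $t$ that upgrades the Laplace-inverted identity from a.e.\ $t$ to all $t$ --- an equally good and perhaps cleaner alternative to entrance-law continuity is to observe that $h^{(\alpha)}$ is excessive for the killed process so that $t\mapsto\vn^{(\alpha)}[h^{(\alpha)}(X(t));\zeta>t]$ is non-increasing, and a non-increasing function equal to $1$ a.e.\ is identically $1$ on $(0,\infty)$; and \textbf{(b)} the harmonicity
$E_y[h^{(\alpha)}(X_\alpha(r));\ T_{\{0\}}>r] = h^{(\alpha)}(y)$, which you correctly identify as the crux of consistency and correctly note can be read off a posteriori from the identity already proved together with the Markov property of $\vn^{(\alpha)}$ and strict positivity of the entrance density. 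With those two points fleshed out, your argument is a valid proof.
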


The proof of Theorem \ref{thm: Y} 
can be found in \cite[Thm.4.7]{YYY}, 
so we omit it. 
See \cite[Thm.1.2]{Y} for the proof of Theorem \ref{thm: Y} 
for a fairly general class 
of one-dimensional symmetric L\'evy processes. 
Several aspects of the law of local time process 
will be discussed in Hayashi--K. Yano \cite{HY}.

\begin{Ex}
In the case where $ \alpha =2 $, we have $ X_2(t) = \sqrt{2} B(t) $, 
and we have the following formulae: 
\begin{align}
p^{(2)}_t(x) =& \frac{1}{2 \sqrt{\pi t}} \e^{ - \frac{x^2}{4t} } 
, \qquad t>0 , \ x \in \bR , 
\label{eq: p(2)} \\
u^{(2)}_q(x) =& \frac{1}{2 \sqrt{q}} \e^{ - \sqrt{q} |x| } 
, \qquad q>0 , \ x \in \bR , 
\label{eq: u(2)} \\
h^{(2)}(x) =& \frac{1}{2} |x| 
, \qquad x \in \bR . 
\label{eq: h(2)}
\end{align}
The process $ (\frac{1}{\sqrt{2}} X(t):t \ge 0) $ under $ P^{h^{(2)}} $ is 
nothing else but 
the symmetrised 3-dimensional Bessel process starting from the origin. 
\end{Ex}

\begin{Thm}[\cite{Y}] \label{thm: Y2}
Let $ q>0 $. Then the following assertions are valid: 
\\ 
\quad {\rm (i)} 
Suppose that $ 1<\alpha \le 2 $. 
Then it holds that 
\begin{align}
\lim_{x \to 0} \frac{h^{(\alpha )}_q(x)}{h^{(\alpha )}(x)} = 1 ; 
\label{eq: Y2 1}
\end{align}
\quad {\rm (iia)} 
Suppose that $ 1<\alpha < 2 $. Let $ a \neq 0 $. 
Then it holds that 
\begin{align}
\lim_{x \to 0} \frac{u^{(\alpha )}_q(a-x)-u^{(\alpha )}_q(a)}{h^{(\alpha )}(x)} = 0 ; 
\label{eq: Y2 2}
\end{align}
\quad {\rm (iib)} 
Suppose that $ \alpha = 2 $. Let $ a \neq 0 $. 
Then it holds that 
\begin{align}
\lim_{x \to \pm 0} \frac{u^{(2)}_q(a-x)-u^{(2)}_q(a)}{h^{(2)}(x)} = 
\pm \e^{-\sqrt{q}|a|} . 
\label{eq: Y2 3}
\end{align}
\end{Thm}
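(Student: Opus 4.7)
For (i), I start from the Fourier--cosine representation
$$h^{(\alpha)}_q(x) = u^{(\alpha)}_q(0) - u^{(\alpha)}_q(x) = \frac{1}{\pi}\int_0^\infty \frac{1-\cos(x\xi)}{q+\xi^\alpha}\,\d\xi$$
and perform the scaling substitution $u=|x|\xi$. This gives
$$h^{(\alpha)}_q(x) = \frac{|x|^{\alpha-1}}{\pi}\int_0^\infty \frac{1-\cos u}{q|x|^\alpha + u^\alpha}\,\d u,$$
while the same substitution applied to $h^{(\alpha)}(x)$ yields the analogous formula with $q|x|^\alpha$ replaced by $0$. Since the integrand is dominated by $(1-\cos u)/u^\alpha$, which lies in $L^1(0,\infty)$ whenever $1<\alpha<3$ (from $u^{2-\alpha}$ behaviour at $0$ and $u^{-\alpha}$ at infinity), dominated convergence together with $q|x|^\alpha \to 0$ delivers the limit $1$.

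For (iia), I exploit the smoothness of the resolvent density away from the origin. The representation $u^{(\alpha)}_q(x) = \int_0^\infty \e^{-qt} p^{(\alpha)}_t(x)\,\d t$, together with the smoothness and rapid decay of the symmetric $\alpha$-stable density $p^{(\alpha)}_t$ in $x$, shows that $u^{(\alpha)}_q$ is $C^\infty$ on $\bR\setminus\{0\}$. Alternatively, one integration by parts in the formal derivative $-\frac{1}{\pi}\int_0^\infty \xi\sin(x\xi)/(q+\xi^\alpha)\,\d\xi$ yields an $L^1$-integrand on any neighbourhood of $a\neq 0$ as soon as $\alpha>1$. Taylor expansion at $a$ then gives $u^{(\alpha)}_q(a-x)-u^{(\alpha)}_q(a) = O(|x|)$ as $x\to 0$. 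Dividing by $h^{(\alpha)}(x)=h^{(\alpha)}(1)|x|^{\alpha-1}$ and using $2-\alpha>0$, the ratio is $O(|x|^{2-\alpha})\to 0$, as claimed.

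For (iib), the explicit formulas $u^{(2)}_q(x) = \frac{1}{2\sqrt{q}}\e^{-\sqrt{q}|x|}$ and $h^{(2)}(x) = |x|/2$ reduce the assertion to a direct Taylor expansion. Assuming $a>0$ and $x\to 0^+$,
$$u^{(2)}_q(a-x) - u^{(2)}_q(a) = \frac{\e^{-\sqrt{q}a}}{2\sqrt{q}}\bigl(\e^{\sqrt{q}x}-1\bigr) \sim \frac{x}{2}\e^{-\sqrt{q}|a|},$$
so dividing by $h^{(2)}(x)=x/2$ gives the limit $\e^{-\sqrt{q}|a|}$; the expansion for $x\to 0^-$ produces an extra minus sign from $|x|/x = -1$, accounting for the $-\e^{-\sqrt{q}|a|}$, and the remaining sign combinations in $a$ are handled identically.

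The only real subtlety is the justification in (iia) that $u^{(\alpha)}_q$ is at least Lipschitz at a point $a\neq 0$; this is standard for resolvent densities of non-degenerate symmetric stable processes but does require either the transition-density integral representation or the integration by parts in the oscillatory Fourier integral alluded to above. Once that is in hand, the assumption $\alpha<2$ enters only through $\alpha-1<1$, which forces $h^{(\alpha)}(x)$ to dominate $|x|$ as $x\to 0$; the case $\alpha=2$ is genuinely different because then $h^{(2)}(x)$ is of the same linear order as the numerator, producing the nontrivial limit in (iib).
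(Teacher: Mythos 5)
Your proposal is correct, and for parts (iia) and (iib) it takes essentially the same route as the paper. For (iib), both you and the paper just plug in the explicit formulas $u^{(2)}_q(x) = \frac{1}{2\sqrt{q}}\e^{-\sqrt{q}|x|}$ and $h^{(2)}(x) = |x|/2$. For (iia), the paper proves Lemma \ref{lem: uq dif estim}, which yields a Lipschitz-type bound $|u^{(\alpha)}_q(a-x)-u^{(\alpha)}_q(a)| \le C^{(\alpha)}_q|x|/|a|$ by one integration by parts in the Fourier-cosine integral (reducing to the mean value theorem for $\varphi(y) = (\sin y)/y$) and then divides by $h^{(\alpha)}(x) \asymp |x|^{\alpha-1}$ to get $O(|x|^{2-\alpha}) \to 0$; your ``one integration by parts in the oscillatory Fourier integral, then Taylor'' is the same mechanism phrased as a differentiability claim rather than a direct Lipschitz estimate. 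The one substantive difference is part (i): the paper simply cites the companion paper [Y, Lem.~4.4] and omits the argument, whereas you supply a short self-contained proof via the scaling substitution $u = |x|\xi$ and dominated convergence against $(1-\cos u)/u^\alpha \in L^1(0,\infty)$. That is a genuine improvement in self-containedness and is the natural argument; it costs nothing beyond the observation that $q|x|^\alpha \to 0$ in the denominator. Your closing remark correctly identifies why the dichotomy between (iia) and (iib) occurs: for $\alpha<2$ the denominator $h^{(\alpha)}(x)\asymp |x|^{\alpha-1}$ dominates the numerator's linear order, while for $\alpha=2$ numerator and denominator are both of order $|x|$ and the one-sided derivative survives in the limit.
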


The proof of the claim (i) of Theorem \ref{thm: Y2} 
can be found in \cite[Lem.4.4]{Y}, so we omit it. 
The proof of the claim (iib) of Theorem \ref{thm: Y2} 
is immediate from the formulae \eqref{eq: u(2)} and \eqref{eq: h(2)}, 
so we omit it, too. 
The proof of the claim (iia) of Theorem \ref{thm: Y2} 
is immediate from the following estimate: 

\begin{Lem}[\cite{Y}] \label{lem: uq dif estim}
Suppose that $ 1<\alpha < 2 $. Let $ a,x \in \bR $ with $ 0<2|x|<|a| $. 
Then there exists a constant $ C^{(\alpha )}_q $ such that 
\begin{align}
| u^{(\alpha )}_q(a-x)-u^{(\alpha )}_q(a) | 
\le \frac{C^{(\alpha )}_q}{|a|} . 
\label{eq: uq difference estimate}
\end{align}
\end{Lem}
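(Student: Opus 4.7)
My plan is to bypass the difference entirely by establishing the sharper pointwise bound $|u^{(\alpha)}_q(a)|\le C^{(\alpha)}_q/|a|$ for every $a\neq 0$, and then to deduce the lemma by the triangle inequality, using the hypothesis $2|x|<|a|$ to control $|a-x|$ from below.

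First I would start from the Fourier representation
$$u^{(\alpha)}_q(a) = \frac{1}{\pi}\int_0^{\infty} \frac{\cos(a\xi)}{q+\xi^{\alpha}}\d\xi$$
and integrate by parts once, writing $\cos(a\xi)=\frac{1}{a}\frac{\d}{\d\xi}\sin(a\xi)$, to obtain
$$u^{(\alpha)}_q(a) = \frac{1}{\pi a}\int_0^{\infty} \sin(a\xi)\,\frac{\alpha\xi^{\alpha-1}}{(q+\xi^{\alpha})^2}\d\xi.$$
The boundary term at $\xi=0$ vanishes because $\sin(0)=0$, and the boundary term at $\xi=\infty$ vanishes because $1/(q+\xi^{\alpha})\to 0$ (here $\alpha>0$ suffices). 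Next I would check that the remaining integral converges absolutely: near $\xi=0$, the integrand $\alpha\xi^{\alpha-1}/(q+\xi^{\alpha})^2$ is of order $\xi^{\alpha-1}$, which is integrable since $\alpha>1$; near $\xi=\infty$, it is of order $\xi^{-\alpha-1}$, which is also integrable. In fact, the substitution $u=q+\xi^{\alpha}$ computes it exactly, giving
$$\int_0^{\infty} \frac{\alpha\xi^{\alpha-1}}{(q+\xi^{\alpha})^2}\d\xi \;=\; \frac{1}{q},$$
and therefore the explicit bound
$$|u^{(\alpha)}_q(a)| \;\le\; \frac{1}{\pi q\,|a|}, \qquad a\neq 0.$$

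To conclude, I would apply the triangle inequality: since the hypothesis $0<2|x|<|a|$ gives $|a-x|\ge |a|-|x|>|a|/2$, it follows that
$$|u^{(\alpha)}_q(a-x) - u^{(\alpha)}_q(a)| \;\le\; |u^{(\alpha)}_q(a-x)| + |u^{(\alpha)}_q(a)| \;\le\; \frac{1}{\pi q|a-x|} + \frac{1}{\pi q|a|} \;\le\; \frac{3}{\pi q\,|a|},$$
which is the desired estimate with $C^{(\alpha)}_q := 3/(\pi q)$. The only mildly delicate point is the integration by parts, which I would justify by truncating the $\xi$-integral to $[\eps,R]$ and letting $\eps\downarrow 0$ and $R\uparrow\infty$; the control of the two boundary terms is routine given the decay $1/(q+\xi^{\alpha})=O(\xi^{-\alpha})$ at infinity and the boundedness of $\xi/(q+\xi^{\alpha})$ near $0$. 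I do not expect any genuine obstacle: the content of the lemma is essentially just the Riemann--Lebesgue-type fact that one integration by parts produces a factor $1/|a|$, together with the observation that $1<\alpha<2$ is exactly what is needed to make the resulting integrand integrable at both endpoints.
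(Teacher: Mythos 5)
Your proof is mathematically correct as a proof of the estimate \emph{as stated}: the integration by parts is valid, the substitution $u=q+\xi^{\alpha}$ really does give $\int_0^{\infty}\alpha\xi^{\alpha-1}(q+\xi^{\alpha})^{-2}\,\d\xi = 1/q$, and the triangle inequality together with $|a-x|>|a|/2$ delivers $|u^{(\alpha)}_q(a-x)-u^{(\alpha)}_q(a)|\le 3/(\pi q|a|)$. However, it takes a genuinely different route from the paper, and the difference matters. The paper also integrates by parts (writing $\cos(a\xi)=\frac{\d}{\d\xi}\bigl(\xi\varphi(a\xi)\bigr)$ with $\varphi(y)=\sin y/y$), but then estimates the \emph{difference} $\varphi(a\xi)-\varphi((a-x)\xi)$ via $|\varphi'(y)|\le 2/|y|$ and a change of variables, obtaining $|\varphi(a\xi)-\varphi((a-x)\xi)|\le 4|x|/|a|$. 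This produces the strictly stronger bound $|u^{(\alpha)}_q(a-x)-u^{(\alpha)}_q(a)|\le C^{(\alpha)}_q|x|/|a|$, with an extra factor of $|x|$.

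That extra factor is not cosmetic: it is precisely what the lemma is used for. In the proof of Theorem \ref{thm: Y2}(iia) the paper writes
\begin{align}
\abra{\frac{u^{(\alpha)}_q(a-x)-u^{(\alpha)}_q(a)}{h^{(\alpha)}(x)}}
\le \frac{C^{(\alpha)}_q}{|a|}\cdot\frac{|x|^{2-\alpha}}{h^{(\alpha)}(1)},
\end{align}
and since $h^{(\alpha)}(x)=h^{(\alpha)}(1)|x|^{\alpha-1}$, this display only follows from the numerator bound $C^{(\alpha)}_q|x|/|a|$, not from $C^{(\alpha)}_q/|a|$. Indeed, with your weaker bound the quotient would be of order $|x|^{1-\alpha}/|a|$, which \emph{diverges} as $x\to 0$ because $\alpha>1$. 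So the lemma as printed appears to have dropped the factor $|x|$; the paper's own proof, and the use made of it, both supply it. Your triangle-inequality strategy cannot recover this factor: once you split into $|u^{(\alpha)}_q(a-x)|+|u^{(\alpha)}_q(a)|$, each term is of size $\asymp 1/|a|$ uniformly in $x$, and the cancellation as $x\to 0$ is lost. The lesson is that the difference on the left must be exploited inside the integral before taking absolute values, which is exactly what the paper's $\varphi$-increment estimate does.
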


The proof of Lemma \ref{lem: uq dif estim} 
can be found in \cite[Lem.6.2, (i)]{Y} in a rather general setting, 
but we give it for convenience of the reader. 

\begin{proof}[Proof of Lemma \ref{lem: uq dif estim}]
Integrating by parts, we have 
\begin{align}
u^{(\alpha )}_q(a-x)-u^{(\alpha )}_q(a) 
=& \frac{1}{\pi} \int_0^{\infty } \frac{\cos a\xi - \cos (a-x) \xi}{q+\xi^{\alpha }} \d \xi 
\\
=& \frac{1}{\pi} \int_0^{\infty } \kbra{ \varphi(a \xi) - \varphi((a-x) \xi) } 
\frac{\alpha \xi^{\alpha } \d \xi}{(q+\xi^{\alpha })^2} 
\label{}
\end{align}
where 
\begin{align}
\varphi(x) = \frac{\sin x}{x} 
\quad (x \neq 0) 
, \qquad \varphi(0) = 1 . 
\label{}
\end{align}
Since $ \varphi'(x) = \frac{\cos x}{x} - \frac{\sin x}{x^2} $ ($ x \neq 0 $), 
we have 
\begin{align}
|\varphi(a \xi) - \varphi((a-x) \xi)| 
\le& \abra{ \int_{(a-x) \xi}^{a \xi} |\varphi'(y)| \d y } 
\le \abra{ \int_{(a-x) \xi}^{a \xi} \frac{2}{|y|} \d y } . 
\label{}
\end{align}
We change variables: $ y=u \xi $, then we have 
\begin{align}
|\varphi(a \xi) - \varphi((a-x) \xi)| 
\le \abra{ \int_{a-x}^a \frac{2 }{|u|} \d u } 
\le \frac{4|x|}{|a|} . 
\label{}
\end{align}
Thus we have proved the estimate \eqref{eq: uq difference estimate}. 
\end{proof}

Let us prove the claim {\rm (iia)} of Theorem \ref{thm: Y2}.

\begin{proof}[Proof of the claim {\rm (iia)} of Theorem \ref{thm: Y2}]
Without loss of generality, we may suppose that $ 0<2|x|<|a| $. 
Using the estimate \eqref{eq: uq difference estimate}, we obtain 
\begin{align}
\abra{ \frac{u^{(\alpha )}_q(a-x)-u^{(\alpha )}_q(a)}{h^{(\alpha )}(x)} } 
\le \frac{C^{(\alpha )}_q}{|a|} \cdot \frac{|x|^{2-\alpha }}{h^{(\alpha )}(1)} , 
\label{}
\end{align}
which tends to zero since $ \alpha < 2 $. 
Now the proof is complete. 
\end{proof}

\section{First hitting time of a single point for $ X_{\alpha } $}
\label{sec: hitting}

\subsection{The case of one-dimensional Brownian motion}

Let $ B = (B(t):t \ge 0) $ denote the one-dimensional Brownian motion starting from 0. 
We consider the first hitting time of $ a \in \bR $ for $ B $: 
\begin{align}
T_{\{ a \}}(B) = \inf \{ t>0 : B(t) = a \} . 
\label{}
\end{align}
It is well-known (see, e.g., \cite[Prop.II.3.7]{MR1725357}) 
that the law of the hitting time is of {\SD} type 
where its Laplace transform is given as follows: 
\begin{align}
E \ebra{ \e^{i \theta \hat{B}(T_{\{ a \}}(B))} } 
= 
E \ebra{ \e^{- \frac{1}{2} \theta^2 T_{\{ a \}}(B)} } 
= 
\e^{ - | a \theta | } 
, \qquad \theta \in \bR 
\label{eq: Brownian identity in law 1} 
\end{align}
where $ \hat{B} = (\hat{B}(t):t \ge 0) $ 
stands for an independent copy of $ B $. 
The identity 
\eqref{eq: Brownian identity in law 1} 
can be expressed as 
\begin{align}
\hat{B}(T_{\{ a \}}(B)) \law |a| \cC 
, \qquad 
T_{\{ a \}}(B) \law 2 a^2 \cT_{\frac{1}{2}} . 
\label{eq: Brownian identity in law 1'}
\end{align}

Let $ a>0 $. 
Let us consider the random times $ G_{ \{ a \} }(B) $ and $ \Xi_{ \{ a \} }(B) $. 
The following path decomposition is due to Williams 
(see \cite{MR0258130} 
and \cite{MR0350881}; 
see also Prop.VII.4.8 and Thm.VII.4.9 of \cite{MR1725357}): 

\begin{Thm}[\cite{MR0258130} and \cite{MR0350881}] \label{thm: Williams}
The process $ (B(t):0 \le t \le T_{\{ a \}}(B)) $ is identical in law to 
the process $ (Y(t):0 \le t \le T') $ defined as follows: 
\begin{align}
Y(t) = 
\begin{cases}
B_1(t) 
& \text{for} \ 0 \le t < T_{\{ M \}}(B_1) ; \\
B_2(T-t) 
& \text{for} \ T_{\{ M \}}(B_1) \le t < T ; \\
R(T+t) 
& \text{for} \ T \le t \le T' 
\end{cases}
\label{}
\end{align}
where $ M $, $ B_1 $, $ B_2 $ and $ R $ are independent, 
$ M $ is a uniform variable on $ (0,a) $, 
$ B_1 $ and $ B_2 $ are both identical in law to $ B $, 
$ R $ is a 3-dimensional Bessel process starting at 0, 
and $ T $ and $ T' $ are random times defined as follows: 
\begin{align}
T = T_{\{ M \}}(B_1) + T_{\{ M \}}(B_2) 
, \qquad 
T' = T+T_{\{ a \}}(R) . 
\label{}
\end{align}
\end{Thm}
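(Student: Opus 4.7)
The plan combines the last-exit decomposition of Proposition~\ref{prop: decomp} applied at $G := G_{\{a\}}(B)$ with the $h$-transform characterization of Theorem~\ref{thm: Y} and a time-reversal argument for Brownian motion, treating the pre-$G$ and post-$G$ parts of the path separately.

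For the post-$G$ part $(B(G+s))_{0 \le s \le T_{\{a\}}(B)-G}$, Proposition~\ref{prop: decomp} gives independence from the pre-$G$ portion and identifies its law as that of the excursion straddling $T_{\{a\}}$ under $\vn^{(2)}(\,\cdot \mid T_{\{a\}} < \zeta)$, killed at its first hitting of $a$. Theorem~\ref{thm: Y}, together with the explicit formula $h^{(2)}(x) = |x|/2$ recorded in the preceding example, identifies this process as the Doob $h$-transform of Brownian motion by the linear harmonic function $x \mapsto x$, which classically coincides with the three-dimensional Bessel process started at $0$ and stopped at first hitting $a$. This produces the third piece $R$ of the claimed decomposition.

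For the pre-$G$ part $(B(t))_{0 \le t \le G}$, introduce $M := \sup_{0 \le t \le G} B(t)$ and $\sigma := \inf\{t \ge 0 : B(t) = M\}$, so that $0 < \sigma < G$ almost surely. First I would verify that $M$ is uniformly distributed on $(0,a)$: by It\^o's excursion theory, the pre-$G$ trajectory is the concatenation of excursions drawn from $\vn^{(2)}$ that fail to hit $a$, stopped at the local time instant of the first excursion that does hit $a$, so by Corollary~\ref{thm: fund thm of exc2} applied to the competing events $\{\sup |u| > m\}$ and $\{T_{\{a\}} < \zeta\}$ one computes $P(M \le m) = m/a$ for $0 < m < a$. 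Given $M$, the strong Markov property at $\sigma$ exhibits the pre-$\sigma$ piece as a standard Brownian motion from $0$ stopped at first hitting $M$, matching $(B_1(t))_{0 \le t \le T_{\{M\}}(B_1)}$; for the fragment on $[\sigma,G]$, which descends from $M$ to $0$ while strictly staying below $M$, Brownian time-reversal shows that its reverse has the law of a Brownian motion from $0$ stopped at first hitting $M$, yielding the description $(B_2(T-t))_{T_{\{M\}}(B_1) \le t \le T}$.

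The hardest step will be the excursion-theoretic identification of the law of $M$ as uniform on $(0,a)$, together with the delicate time-reversal argument: one must ensure no excursion upcrosses the level $M$ after $\sigma$, which is guaranteed by the very choice $M = \sup_{[0,G]} B$, so that the reversed fragment is genuinely an unconditioned Brownian motion absorbed at $M$. Once these facts are in hand, the independence statement of Proposition~\ref{prop: decomp} glues the three segments together to complete the proof.
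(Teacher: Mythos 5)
The paper itself states Theorem \ref{thm: Williams} without proof, attributing it to Williams \cite{MR0258130,MR0350881}, so there is no in-paper argument to compare against; your attempt is an independent sketch. It follows a recognisable excursion-theoretic route to Williams' decomposition, but several of the central steps are asserted rather than proved.

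The main gap is the sentence ``the strong Markov property at $\sigma$ exhibits the pre-$\sigma$ piece as a standard Brownian motion from $0$ stopped at first hitting $M$.'' The argmax time $\sigma$ is not a stopping time: it depends on the whole trajectory on $[0,G]$, and $G$ in turn depends on the whole path up to $T_{\{a\}}$. The strong Markov property therefore does not apply at $\sigma$, and the conclusion you want here is precisely the substance of Williams' theorem, not a corollary of Markovianity. Making this rigorous --- for instance by conditioning on $M=m$, noting that $T_{\{m\}}(B)$ is then a genuine stopping time, and then showing both that this conditioning leaves the pre-$T_{\{m\}}$ law undisturbed and that the reversed middle fragment becomes a Brownian motion run to $T_{\{m\}}$, with the two fragments conditionally independent given $M$ --- is exactly what a proof must supply; ``Brownian time-reversal shows\dots'' is a conclusion, not an argument, and the reversal theorems invoked here (reversal from a last-exit time, Nagasawa duality) have hypotheses that must be matched against the non-stopping times $\sigma$ and $G$. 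There is also a secondary gap on the post-$G$ side: Theorem \ref{thm: Y} characterises the harmonic transform $\vn^{(\alpha )}\ebra{Z_t\,h^{(\alpha )}(X(t));\,\zeta>t}$, which is not the same object as the conditional law $\vn^{(\alpha )}(\,\cdot\mid T_{\{a\}}<\zeta)$ killed at $T_{\{a\}}$. To pass from one to the other one must insert a Markov-property step under $\vn^{(\alpha )}$ involving the exit probability $x/a$ that a Brownian motion started at $x\in(0,a)$ reaches $a$ before $0$, and check that this factor, not $h^{(2)}$ itself, is what the conditioning produces; that step is absent from your outline. Your excursion computation of $P(M\le m)=m/a$ is essentially correct, though the competing event should be $\{\sup u>m\}$ rather than $\{\sup|u|>m\}$, since only positive excursions contribute to $M$.
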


From this path decomposition, we may compute 
the Laplace transform of $ G_{ \{ a \} }(B) $ as follows: 
\begin{align}
E \ebra{ \e^{ -q G_{ \{ a \} }(B) } } 
= \int_0^a \frac{\d m}{a} \cbra{ E \ebra{ \e^{ -q T_{ \{ m \} }(B) } } }^2 
= \frac{1-\e^{-2 \sqrt{2q} a}}{2 \sqrt{2q} a} 
, \qquad q>0 . 
\label{eq: LT of GaB}
\end{align}
We may also compute the Laplace transform of $ \Xi_{ \{ a \} }(B) $ as follows: 
\begin{align}
E \ebra{ \e^{ -q \Xi_{ \{ a \} }(B) } } 
= E \ebra{ \e^{ -q T_{ \{ a \} }(R) } } 
= \frac{\sqrt{2q} a}{\sinh (\sqrt{2q} a)} 
, \qquad q>0 . 
\label{eq: LT of XiaB}
\end{align}
In other words, we have 
\begin{align}
\Xi_{ \{ a \} }(B) \law T_{ \{ a \} }(R) \law a^2 S_1 . 
\label{}
\end{align}

\begin{Rem}
The laws of first hitting times are known to be of {\SD} type 
also for Bessel processes with drift (see Pitman--Yor \cite{MR620995}) 
and of {\ID} type 
for one-dimensional diffusion processes 
(see Yamazato \cite{MR2279151} and references therein). 
\end{Rem}

\subsection{The law of $ T_{\{ a \}}(X_{\alpha }) $}

Let us consider the first hitting time of point $ a \in \bR $ 
for $ X_{\alpha } $ of index $ 1<\alpha \le 2 $: 
\begin{align}
T_{\{ a \}}(X_{\alpha }) 
=& \inf \{ t>0 : X_{\alpha }(t) = a \} 
. 
\label{}
\end{align}
It is well-known (see, e.g., \cite[Cor.II.5.18]{MR1406564}) that 
\begin{align}
E[ \e^{ - q T_{\{ a \}}(X_{\alpha }) } ] 
= \frac{u^{(\alpha )}_q(a)}{u^{(\alpha )}_q(0)} 
, \qquad q>0. 
\label{eq: LT of FHT}
\end{align}

Let $ \hat{X}_{\alpha } = (\hat{X}_{\alpha }(t):t \ge 0) $ 
be an independent copy of $ X_{\alpha } $. 
The following is a generalization of the formulae 
\eqref{eq: Brownian identity in law 1} 
and \eqref{eq: Brownian identity in law 1'}. 

\begin{Thm} \label{thm: HSalpha}
Suppose that $ 1<\alpha \le 2 $. Let $ a \in \bR $. Then 
\begin{align}
E \ebra{ \e^{ i \theta \hat{X}_{\alpha }(T_{\{ a \}}(X_{\alpha})) } } 
= E \ebra{ \e^{ - |\theta|^{\alpha } T_{\{ a \}}(X_{\alpha}) } } 
= \frac{\sin (\pi/\alpha )}{2 \pi / \alpha } L_{\alpha }(a \theta) 
\label{eq: TaSalpha1}
\end{align}
and 
\begin{align}
\hat{X}_{\alpha }(T_{\{ a \}}(X_{\alpha})) 
\law 
|a| \cC_{\alpha } 
, \qquad 
T_{\{ a \}}(X_{\alpha }) 
\law \frac{|a|^{\alpha }}{(\cR_{\alpha })^{\alpha } \cB_{1-\gamma , \gamma } } 
\label{eq: TaSalpha2}
\end{align}
where $ \gamma = 1/\alpha $. 
\end{Thm}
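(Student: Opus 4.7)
The statement comprises two scalar equalities and two identifications in law, which I would treat in that order. The first equality is immediate: since $T_{\{a\}}(X_\alpha)$ is measurable with respect to the natural filtration of $X_\alpha$, it is independent of $\hat{X}_\alpha$, and conditioning on $T_{\{a\}}(X_\alpha)$ combined with \eqref{eq: char func of X alpha} applied to $\hat{X}_\alpha$ yields $E[\e^{i\theta\hat{X}_\alpha(T_{\{a\}}(X_\alpha))}] = E[\e^{-|\theta|^\alpha T_{\{a\}}(X_\alpha)}]$.

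For the second equality, the plan is to start from \eqref{eq: LT of FHT}, set $q = |\theta|^\alpha$, and scale. Changing variable $\xi = |\theta|\eta$ in the Fourier integral defining $u^{(\alpha)}_q(a)$ yields $u^{(\alpha)}_{|\theta|^\alpha}(a) = |\theta|^{1-\alpha}L_\alpha(a\theta)$, where the Linnik density is identified via the Fourier inversion formula $L_\alpha(x) = \pi^{-1}\int_0^\infty \cos(x\eta)(1+\eta^\alpha)^{-1}\d\eta$ obtained in the proof of the proposition relating $\cC_\alpha$ and $L_\alpha$. Combined with Lemma \ref{lem: h-func}(i), which gives $u^{(\alpha)}_{|\theta|^\alpha}(0) = |\theta|^{1-\alpha}u^{(\alpha)}_1(0)$, the ratio becomes the stated constant multiple of $L_\alpha(a\theta)$. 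The first identification in law then follows by comparing this characteristic function with that of $|a|\cC_\alpha$, computed in the same earlier proposition; uniqueness of characteristic functions yields $\hat{X}_\alpha(T_{\{a\}}(X_\alpha)) \law |a|\cC_\alpha$.

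The substantive step is the second identification in law, which I would verify by matching Laplace transforms. Writing $\rho = q|a|^\alpha/\cR_\alpha^\alpha$ and conditioning on $\cR_\alpha$ first, the change of variable $u = \rho/b$ in the Beta density, followed by the integral representation $(v+\rho)^{-1} = \int_0^\infty \e^{-t(v+\rho)}\d t$ and two Gamma integrals, produces $E[\exp(-\rho/\cB_{1-\gamma,\gamma})] = \Gamma(1-\gamma)^{-1}\int_\rho^\infty \e^{-u}u^{-\gamma}\d u$. On the other hand, using $L_\alpha(x) = \int_0^\infty \e^{-t}p^{(\alpha)}_t(x)\d t$ together with the self-similarity $p^{(\alpha)}_t(x) = t^{-\gamma}p^{(\alpha)}_1(xt^{-\gamma})$ and the definition \eqref{eq: def of cR alpha} of $\cR_\alpha$, a parallel change of variable and Fubini recast the Laplace transform $E[\e^{-qT_{\{a\}}(X_\alpha)}]$ (known from the previous step as a multiple of $L_\alpha(a\theta)$) in precisely the same form, the coefficients being matched by means of the reflection identity $\sin(\pi\gamma)\Gamma(\gamma)\Gamma(1-\gamma) = \pi$ and the explicit value \eqref{eq: p alpha 1 0} of $p^{(\alpha)}_1(0)$. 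The main obstacle is exactly this matching: careful bookkeeping of the change-of-variable Jacobians is required, and it is the survival-function description \eqref{eq: def of cR alpha} that allows $p^{(\alpha)}_1$ to be replaced by a probability to which Fubini can be applied — this is how $\cR_\alpha$ comes to appear in the final statement at all.
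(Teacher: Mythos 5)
Your argument is correct and follows essentially the same route as the paper, relying on the same ingredients: formula~\eqref{eq: LT of FHT}, the change of variables $\xi\mapsto|\theta|\eta$ giving the first two claims, and for the last claim the scaling $u^{(\alpha)}_q(0)=u^{(\alpha)}_1(0)\,q^{\gamma-1}$, the self-similarity $p^{(\alpha)}_t(x)=t^{-\gamma}p^{(\alpha)}_1(x t^{-\gamma})$, the survival-function description~\eqref{eq: def of cR alpha}, and the cancellation $\Gamma(1-\gamma)\,p^{(\alpha)}_1(0)/u^{(\alpha)}_1(0)=1$ coming from~\eqref{eq: p alpha 1 0} and Lemma~\ref{lem: h-func}(i). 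The only genuine difference is organizational: the paper inverts the Laplace transform directly (writing $u^{(\alpha)}_q(1)/(q\,u^{(\alpha)}_q(0))$ as a product of Laplace transforms and recovering the distribution function of $T_{\{1\}}(X_\alpha)$ as a convolution, which is then recast as a probability via the Beta density and~\eqref{eq: def of cR alpha}), whereas you match Laplace transforms on both sides and invoke uniqueness, routing the Beta computation through the intermediary identity $E[\e^{-\rho/\cB_{1-\gamma,\gamma}}]=P(\cG_{1-\gamma}>\rho)$, which follows from $\cG_{1-\gamma}\law\cB_{1-\gamma,\gamma}\,\fe$ (a special case of~\eqref{eq: Za Zb}) rather than the integral manipulation you sketch. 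Both readings are sound; the paper's yields the cdf of $T_{\{1\}}(X_\alpha)$ as a by-product, while yours is slightly tidier in avoiding an explicit inversion.
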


We can recover \eqref{eq: Brownian identity in law 1'} if we take $ \alpha =2 $, 
noting that 
\begin{align}
T_{\{ a \}}(B) 
\law T_{\{ \sqrt{2} a \}}(X_2) 
\law \frac{2 a^2}{(\cR_2)^2 \cB_{1/2,1/2}} 
\law \frac{a^2}{2 \fe \cB_{1/2,1/2}} 
\law \frac{a^2}{2 \cG_{1/2}} 
\law 2 a^2 \cT_{1/2} . 
\label{}
\end{align}

\begin{proof}[Proof of Theorem \ref{thm: HSalpha}]
If we take $ q=|\theta|^{\alpha } $, then 
\begin{align}
u^{(\alpha )}_q(x) 
= \frac{1}{\pi} \int_0^{\infty } 
\frac{\cos( x \xi ) }{|\theta|^{\alpha }+|\xi|^{\alpha }} \d \xi 
= \frac{|\theta|^{1-\alpha }}{\pi} \int_0^{\infty } 
\frac{\cos( \theta x \xi ) }{1+|\xi|^{\alpha }} \d \xi 
, \qquad x \in \bR . 
\label{}
\end{align}
Hence, by the formula \eqref{eq: LT of FHT}, we obtain 
\begin{align}
E[ \e^{ - |\theta|^{\alpha } T_{\{ a \}}(X_{\alpha }) } ] 
= E[ \cos (\theta |a| \cC_{\alpha }) ] 
= E[ \e^{ i \theta |a| \cC_{\alpha } } ] . 
\label{}
\end{align}
This shows \eqref{eq: TaSalpha1} and the first identity of \eqref{eq: TaSalpha2}. 

To prove the second identity of \eqref{eq: TaSalpha2}, 
it suffices to prove the claim when $ a=1 $; 
in fact, by the self-similarity property, we have 
\begin{align}
T_{\{ a \}}(X_{\alpha }) 
\law |a|^{\alpha } 
T_{\{ 1 \}}(X_{\alpha }) . 
\label{}
\end{align}
Note that 
\begin{align}
\int_0^{\infty } \e^{-qt} P( T_{\{ 1 \}}(X_{\alpha }) < t ) \d t 
= \frac{1}{q} E[ \e^{-q T_{\{ 1 \}}(X_{\alpha })} ] 
= \frac{u^{(\alpha )}_q(1)}{qu^{(\alpha )}_q(0)} . 
\label{}
\end{align}
Since $ u^{(\alpha )}_q(0) = u^{(\alpha )}_1(0) q^{\gamma -1 } $ 
where $ \gamma = \frac{1}{\alpha } $, 
we have 
\begin{align}
\frac{1}{qu^{(\alpha )}_q(0)} 
= \frac{1}{u^{(\alpha )}_1(0)} q^{-\gamma} 
= \frac{1}{u^{(\alpha )}_1(0) \Gamma(\gamma)} \int_0^{\infty } y^{\gamma -1} \e^{-qy} \d y . 
\label{}
\end{align}
Hence, by Laplace inversion, we obtain 
\begin{align}
P( T_{\{ 1 \}}(X_{\alpha }) < t ) 
= \frac{1}{u^{(\alpha )}_1(0) \Gamma(\gamma)} 
\int_0^t (t-s)^{\gamma -1} p^{(\alpha )}_s(1) \d s. 
\label{}
\end{align}
By the scaling property 
\begin{align}
p^{(\alpha )}_s(x) = \frac{1}{s^{\gamma}} p^{(\alpha )}_1 \cbra{ \frac{x}{s^{\gamma}} } 
, \qquad s>0 , 
\label{}
\end{align}
we obtain 
\begin{align}
P( T_{\{ 1 \}}(X_{\alpha }) < t ) 
=& \frac{\Gamma(1-\gamma) p^{(\alpha )}_1(0) }{u^{(\alpha )}_1(0) } 
\int_0^t \frac{(t-s)^{\gamma -1} s^{-\gamma}}{\Gamma(\gamma) \Gamma(1-\gamma)} 
\cdot \frac{p^{(\alpha )}_1 (1/s^{\gamma}) }{p^{(\alpha )}_1(0) } \d s 
\\
=& 
\int_0^1 \frac{(1-s)^{\gamma -1} s^{-\gamma}}{\Gamma(\gamma) \Gamma(1-\gamma)} 
\cdot \frac{p^{(\alpha )}_1 (1/(ts)^{\gamma}) }{p^{(\alpha )}_1(0) } \d s 
\\
& \qquad \text{(from \eqref{eq: p alpha 1 0} and (i) of Lemma \ref{lem: h-func})}
\n \\
=& 
P \cbra{ \cR_{\alpha } > \frac{1}{(t \cB_{1-\gamma , \gamma })^{\gamma}} } 
\qquad \text{(from \eqref{eq: def of cR alpha})}
\\
=& 
P \cbra{ \frac{1}{(\cR_{\alpha })^{\alpha } \cB_{1-\gamma , \gamma } } < t } . 
\label{}
\end{align}
Now the proof is complete. 
\end{proof}

\subsection{Laplace transform formula for first hitting time of two points}

For later use, we prepare several important formulae 
concerning Laplace transforms for first hitting time of two points. 

Let us denote the symmetric $ \alpha $-stable process starting from $ x \in \bR $ 
by $ X^x_{\alpha }(t) = x + X_{\alpha }(t) $. 
Suppose that $ 1<\alpha \le 2 $. 
Recall that the Laplace transform of first hitting time of a single point 
is given by (see \eqref{eq: LT of FHT}) 
\begin{align}
\varphi^q_{x \to a} 
:= E \ebra{ \e^{ - q T_{\{ a \}}(X^x_{\alpha }) } } 
= 
\frac{u^{(\alpha )}_q(x-a)}{u^{(\alpha )}_q(0)} . 
\label{eq: LT of hitting time}
\end{align}
The Laplace transform of first hitting time of two points, 
i.e., $ T_{\{ a \}}(X^x_{\alpha }) \wedge T_{\{ b \}}(X^x_{\alpha }) $, 
is given by the following formula: 

\begin{Prop} \label{prop: hitting time of a and b}
Suppose that $ 1<\alpha \le 2 $. Let $ x,a,b \in \bR $. 
Then 
\begin{align}
\varphi^q_{x \to a,b} 
:= E \ebra{ \e^{ - q T_{\{ a \}}(X^x_{\alpha }) \wedge T_{\{ b \}}(X^x_{\alpha }) } } 
= \frac{u^{(\alpha )}_q(x-a) + u^{(\alpha )}_q(x-b)}{u^{(\alpha )}_q(0) + u^{(\alpha )}_q(a-b)} . 
\label{}
\end{align}
\end{Prop}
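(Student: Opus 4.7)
My plan is the classical strong Markov / linear system approach, using the two single-point hitting time formulas together with the symmetry of $u^{(\alpha)}_q$.

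Write $T_a = T_{\{a\}}(X^x_\alpha)$, $T_b = T_{\{b\}}(X^x_\alpha)$, and $T = T_a\wedge T_b$. Since $X_\alpha$ is c\`adl\`ag and hits points (as $1<\alpha\leq 2$), on $\{T_a<T_b\}$ one has $X^x_\alpha(T)=a$ and on $\{T_b<T_a\}$ one has $X^x_\alpha(T)=b$ (the event $\{T_a=T_b\}$ has probability zero by regularity). Set
\begin{equation*}
p := E\bigl[\e^{-qT};\ X^x_\alpha(T)=a\bigr],\qquad r := E\bigl[\e^{-qT};\ X^x_\alpha(T)=b\bigr].
\end{equation*}
Then $\varphi^q_{x\to a,b} = p+r$, which is what I want to identify.

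Next, I would apply the strong Markov property at $T$ to express $\varphi^q_{x\to a}$ and $\varphi^q_{x\to b}$ in terms of $p,r$ and of the single-point hitting transforms from $a$ or $b$. Using the translation invariance of $X_\alpha$ and the evenness of $u^{(\alpha)}_q$ (consequence of symmetry of $X_\alpha$ and formula \eqref{eq: LT of FHT}), one gets $\varphi^q_{a\to b}=\varphi^q_{b\to a}=:\rho$, where $\rho = u^{(\alpha)}_q(a-b)/u^{(\alpha)}_q(0)$. The system reads
\begin{align*}
\varphi^q_{x\to a} &= p + r\rho,\\
\varphi^q_{x\to b} &= p\rho + r.
\end{align*}
Adding the two equations and dividing by $1+\rho$ gives
\begin{equation*}
p+r = \frac{\varphi^q_{x\to a}+\varphi^q_{x\to b}}{1+\rho},
\end{equation*}
and then substituting the explicit expressions from \eqref{eq: LT of hitting time} and the formula for $\rho$ yields exactly the claimed ratio.

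The only point requiring any care is the justification that $\{X^x_\alpha(T)=a\}\cup\{X^x_\alpha(T)=b\}$ has full probability on $\{T<\infty\}$, i.e.\ that $X^x_\alpha$ truly reaches $\{a,b\}$ by hitting one of the two points rather than by some pathological limit behaviour; but this follows from quasi-left-continuity of $X_\alpha$ together with the regularity of points for $1<\alpha\leq 2$ (Kesten–Bretagnolle). The strong Markov step itself is routine once this is settled, and no further delicate estimate (of the type used in Lemma \ref{lem: uq dif estim}) is required.
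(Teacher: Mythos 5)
Your proof is correct, and it takes a genuinely different route from the paper. The paper proves this proposition via the capacitary measure $\mu^q_F$ and Theorem II.2.7 of Bertoin's book, which produces the representation
\begin{align}
E\ebra{\e^{-qT^x_F}} = C^q_{a\prec b}\,u^{(\alpha)}_q(x-a) + C^q_{b\prec a}\,u^{(\alpha)}_q(x-b)
\end{align}
with coefficients determined by plugging in $x=a$ and $x=b$; the linear system there is in the two unknown coefficients $C^q_{a\prec b},C^q_{b\prec a}$. You instead apply the strong Markov property directly at $T=T_a\wedge T_b$ and set up a linear system in the two quantities $p=E[\e^{-qT};X^x_\alpha(T)=a]$ and $r=E[\e^{-qT};X^x_\alpha(T)=b]$, using only the single-point formula \eqref{eq: LT of FHT}, translation invariance, and symmetry. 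Your route is more elementary — it bypasses capacitary measures entirely — and, incidentally, it is essentially the same decomposition that the paper later uses to prove Proposition \ref{prop: hitting time of a before b} (compare your system $\varphi^q_{x\to a}=p+r\rho$ with the paper's \eqref{eq: varphi x to a}); you simply run that argument one step earlier. What the paper's capacitary approach buys is a more systematic template that scales verbatim to the three-point computation in Proposition \ref{prop: hitting time of 0,a,-a}; your $2\times 2$ system would grow to a $3\times 3$ one there, which is fine but a bit more bookkeeping. One small remark: the claim that $X^x_\alpha(T)\in\{a,b\}$ on $\{T<\infty\}$ needs only right-continuity of paths (the infimum defining $T_a$ is attained, and at it $X$ equals $a$ by taking limits from the right); you do not need to invoke quasi-left-continuity for this.
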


\begin{proof}
For any closed set $ F $, we denote 
\begin{align}
T^x_F = T_F(X^x_{\alpha }) = \inf \{ t>0: X^x_{\alpha }(t) \in F \} . 
\label{}
\end{align}
Following \cite[p.49]{MR1406564}, we introduce the {\em capacitary measure} as 
\begin{align}
\mu^q_F(A) = q \int E \ebra{ \e^{-qT^z_F}; X^z_{\alpha }(T^z_F) \in A } \d z 
, \qquad 
A \in \cB(\bR) . 
\label{}
\end{align}
Now we apply Theorem II.2.7 of \cite{MR1406564} and obtain 
\begin{align}
\int \mu^q_F(\d y) \int_A u^{(\alpha )}_q(x-y) \d x 
= \int_A E \ebra{ \e^{-qT^x_F} } \d x 
, \qquad 
A \in \cB(\bR) , 
\label{}
\end{align}
where we have used the fact that the process considered is symmetric. 
This implies that 
\begin{align}
E \ebra{ \e^{-qT^x_F} } 
= \int u^{(\alpha )}_q(x-y) \mu^q_F(\d y) . 
\label{}
\end{align}
By definition of $ \mu^q_F $, we obtain 
\begin{align}
E \ebra{ \e^{-qT^x_F} } 
= q \int E \ebra{ \e^{-qT^z_F} u^{(\alpha )}_q(x-X^z_{\alpha }(T^z_F)) } \d z . 
\label{eq: LT of TxF formula}
\end{align}
Now we let $ F = \{ a,b \} $. 
Then we have $ T^x_F = T^x_{\{ a \}} \wedge T^x_{\{ b \}} $. 
Noting that $ X^z_{\alpha }(T^z_F) = a $ or $ b $ almost surely, we have 
\begin{align}
E \ebra{ \e^{-qT^x_F} } 
= C^q_{a \prec b} u^{(\alpha )}_q(x-a) + C^q_{b \prec a} u^{(\alpha )}_q(x-b) 
\label{eq: LT of TxF}
\end{align}
where 
\begin{align}
C^q_{a \prec b} = 
q \int E \ebra{ \e^{-qT^z_F}; T^z_{\{ a \}} < T^z_{\{ b \}} } \d z . 
\label{}
\end{align}
Since $ T^a_F = T^b_F = 0 $ almost surely, we have 
\begin{align}
1 =& C^q_{a \prec b} u^{(\alpha )}_q(0) + C^q_{b \prec a} u^{(\alpha )}_q(a-b) , 
\\
1 =& C^q_{a \prec b} u^{(\alpha )}_q(b-a) + C^q_{b \prec a} u^{(\alpha )}_q(0) . 
\label{}
\end{align}
Hence we obtain 
\begin{align}
C^q_{a \prec b} = C^q_{b \prec a} = 
\frac{1}{u^{(\alpha )}_q(0) + u^{(\alpha )}_q(a-b)} . 
\label{}
\end{align}
Combining this with \eqref{eq: LT of TxF}, we obtain the desired result. 
\end{proof}

The Laplace transform of first hitting time of point $ a $ before hitting $ b $ 
is given by the following formula: 

\begin{Prop} \label{prop: hitting time of a before b}
Suppose that $ 1<\alpha \le 2 $. Let $ x,a,b \in \bR $ with $ a \neq b $. 
Then 
\begin{align}
\varphi^q_{x \to a \prec b} 
:=& E \ebra{ \e^{ - q T_{\{ a \}}(X^x_{\alpha }) } 
; T_{\{ a \}}(X^x_{\alpha }) < T_{\{ b \}}(X^x_{\alpha }) } 
\\
=& 
\frac{u^{(\alpha )}_q(0) u^{(\alpha )}_q(x-a) - u^{(\alpha )}_q(a-b) u^{(\alpha )}_q(x-b)}
{ \{ u^{(\alpha )}_q(0) \}^2 - \{ u^{(\alpha )}_q(a-b) \}^2 } . 
\label{eq: hitting time of a before b}
\end{align}
\end{Prop}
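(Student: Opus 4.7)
The plan is to reduce the computation of $\varphi^q_{x \to a \prec b}$ to the already established formula \eqref{eq: LT of hitting time} via the strong Markov property, obtaining a $2\times 2$ linear system that can be solved by Cramer's rule.

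First, I would decompose $T_{\{a\}}(X^x_{\alpha})$ according to whether $a$ or $b$ is hit first. On the event $\{T^x_{\{b\}} < T^x_{\{a\}}\}$, the strong Markov property at $T^x_{\{b\}}$ shows that the post-$T^x_{\{b\}}$ process is an independent copy of $X_\alpha$ starting from $b$, and its first hitting time of $a$ is distributed as $T_{\{a\}}(X^b_\alpha)$. This gives
\begin{equation*}
\varphi^q_{x \to a} \;=\; \varphi^q_{x \to a \prec b} \;+\; \varphi^q_{x \to b \prec a}\cdot \varphi^q_{b \to a},
\end{equation*}
and by the symmetric argument,
\begin{equation*}
\varphi^q_{x \to b} \;=\; \varphi^q_{x \to b \prec a} \;+\; \varphi^q_{x \to a \prec b}\cdot \varphi^q_{a \to b}.
\end{equation*}

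Next, I would use \eqref{eq: LT of hitting time} together with the fact that $u^{(\alpha)}_q$ is an even function (since $X_\alpha$ is symmetric, apparent also from \eqref{eq: p alpha}) to write $\varphi^q_{b\to a} = \varphi^q_{a\to b} = u^{(\alpha)}_q(a-b)/u^{(\alpha)}_q(0)$. Multiplying both displays by $u^{(\alpha)}_q(0)$ produces the linear system
\begin{align*}
u^{(\alpha)}_q(0)\,\varphi^q_{x \to a \prec b} + u^{(\alpha)}_q(a-b)\,\varphi^q_{x \to b \prec a} &= u^{(\alpha)}_q(x-a), \\
u^{(\alpha)}_q(a-b)\,\varphi^q_{x \to a \prec b} + u^{(\alpha)}_q(0)\,\varphi^q_{x \to b \prec a} &= u^{(\alpha)}_q(x-b).
\end{align*}
The determinant equals $\{u^{(\alpha)}_q(0)\}^2 - \{u^{(\alpha)}_q(a-b)\}^2$, which is nonzero because $u^{(\alpha)}_q(0)$ is the strict maximum of $u^{(\alpha)}_q$ (an easy consequence of \eqref{eq: p alpha}). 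Solving by Cramer's rule yields exactly the right hand side of \eqref{eq: hitting time of a before b}.

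There is no serious obstacle here; the only points to check carefully are the legitimacy of the strong Markov argument at $T^x_{\{a\}}$ and $T^x_{\{b\}}$ (both are stopping times, and since the origin is regular for itself when $1<\alpha\le 2$, so is every singleton by translation invariance, so these hitting times are finite a.s.\ and the post-stopping process has the expected law), and the invertibility of the coefficient matrix. Everything else is bookkeeping.
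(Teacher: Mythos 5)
Your proof is correct, and it takes a route that is similar in spirit (strong Markov plus linear algebra) but differs in a useful way from the paper's. The paper also writes the strong Markov identity
\begin{equation*}
\varphi^q_{x \to a} = \varphi^q_{x \to a \prec b} + \varphi^q_{x \to b \prec a}\,\varphi^q_{b \to a},
\end{equation*}
but for the second equation it uses the trivial decomposition $\varphi^q_{x \to a,b} = \varphi^q_{x \to a \prec b} + \varphi^q_{x \to b \prec a}$ together with the closed form of $\varphi^q_{x\to a,b}$ from Proposition~\ref{prop: hitting time of a and b}, which was proved via capacitary measures. You instead obtain a second equation by swapping the roles of $a$ and $b$ in the strong Markov argument, producing a self-contained $2\times 2$ system that needs only the one-point formula \eqref{eq: LT of hitting time} and the evenness of $u^{(\alpha)}_q$. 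A pleasant by-product is that adding your two Cramer's-rule solutions re-derives the formula of Proposition~\ref{prop: hitting time of a and b} without any appeal to capacitary measures, since the common factor $u^{(\alpha)}_q(0)-u^{(\alpha)}_q(a-b)$ cancels. Your justification of invertibility via $u^{(\alpha)}_q(0)>u^{(\alpha)}_q(y)$ for $y\neq 0$ (read off from \eqref{eq: p alpha}) is correct, as is the observation that for $1<\alpha\le 2$ singletons are regular and hit in finite time, which makes the strong Markov step and the shift identity $T^x_{\{a\}} = T^x_{\{b\}} + T^x_{\{a\}}\circ\theta_{T^x_{\{b\}}}$ on $\{T^x_{\{b\}}<T^x_{\{a\}}\}$ unproblematic.
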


\begin{proof}
Let us keep the notations in the proof of Proposition \ref{prop: hitting time of a and b}. 
Noting that 
\begin{align}
T^x_{\{ a \}} 
= T^x_{\{ b \}} 
+ T^x_{\{ a \}} \circ \theta_{T^x_{\{ b \}}} 
\qquad \text{on $ \{ T^x_{\{ a \}} > T^x_{\{ b \}} \} $}, 
\label{}
\end{align}
we see, by the strong Markov property, that 
\begin{align}
E \ebra{ \e^{ - q T^x_{\{ a \}} } 
; T^x_{\{ a \}} > T^x_{\{ b \}} } 
= \varphi^q_{x \to b \prec a} \varphi^q_{b \to a} . 
\label{}
\end{align}
Thus we have 
\begin{align}
\varphi^q_{x \to a} 
= \varphi^q_{x \to a \prec b} + \varphi^q_{x \to b \prec a} \varphi^q_{b \to a} . 
\label{eq: varphi x to a}
\end{align}
Combining this with the trivial identity 
\begin{align}
\varphi^q_{x \to a,b} = 
\varphi^q_{x \to a \prec b} + \varphi^q_{x \to b \prec a} , 
\label{}
\end{align}
we obtain 
\begin{align}
\varphi^q_{x \to a \prec b} 
= \frac{\varphi^q_{x \to a} - \varphi^q_{b \to a} \varphi^q_{x \to a,b}}{1-\varphi^q_{b \to a}} . 
\label{}
\end{align}
This proves the desired result. 
\end{proof}

\begin{Rem}
The formula \eqref{eq: hitting time of a before b} can be written as 
\begin{align}
& E \ebra{ \e^{ - q T_{\{ a \}}(X^x_{\alpha }) } 
; T_{\{ a \}}(X^x_{\alpha }) < T_{\{ b \}}(X^x_{\alpha }) } 
\\
=& 
\frac{ u^{(\alpha )}_q(x-b) h^{(\alpha )}_q(a-b) 
+ u^{(\alpha )}_q(0) \{ h^{(\alpha )}_q(x-b) - h^{(\alpha )}_q(x-a) \} }
{ \{ u^{(\alpha )}_q(0) + u^{(\alpha )}_q(a-b) \} h^{(\alpha )}_q(a-b) } . 
\label{}
\end{align}
Letting $ q \to 0+ $, we obtain 
\begin{align}
P \cbra{ T_{\{ a \}}(X^x_{\alpha }) < T_{\{ b \}}(X^x_{\alpha }) } 
= \frac{1}{2} \kbra{ 1 + \frac{ |x-b|^{\alpha -1} - |x-a|^{\alpha -1} }{ |a-b|^{\alpha -1} } }, 
\label{}
\end{align}
which is a special case of Getoor's formula \cite[Thm.6.5]{MR0185663}. 
See also \cite[Thm.6.1]{Y} for its application to It\^o's measure 
for symmetric L\'evy processes. 
\end{Rem}

\begin{Rem}
Let $ a<x<b $. 
Then, as corollaries 
of Propositions \ref{prop: hitting time of a and b} and \ref{prop: hitting time of a before b}, 
we recover the following well-known formulae 
(see, e.g., \cite[Problem 1.7.6]{MR0345224}) 
for the Brownian motion $ (B^x(t) = x + B(t):t \ge 0) $ starting from $ x $: 
\begin{align}
E \ebra{ \e^{ - q T_{\{ a \}}(B^x) \wedge T_{\{ b \}}(B^x) } } 
= \frac{\cosh \cbra{ \sqrt{2q} \cbra{ x-\frac{b+a}{2} } } }
{\cosh \cbra{ \sqrt{2q} \cdot \frac{b-a}{2} } } 
\label{}
\end{align}
and 
\begin{align}
E \ebra{ \e^{ - q T_{\{ a \}}(B^x) } 
; T_{\{ a \}}(B^x) < T_{\{ b \}}(B^x) } 
= 
\frac{\sinh \sqrt{2q} (b-x) }
{ \sinh \sqrt{2q} (b-a) } . 
\label{}
\end{align}
\end{Rem}

\subsection{The Laplace transforms of 
$ G_{\{ a \}}(X_{\alpha }) $ and $ \Xi_{\{ a \}}(X_{\alpha }) $} 
\label{sec: LT of Ga and Xia for Salpha}

The following theorem generalises 
the formulae \eqref{eq: LT of GaB} and \eqref{eq: LT of XiaB}: 

\begin{Thm} \label{thm: LT of Ga and Xia of Salpha}
Suppose that $ 1<\alpha \le 2 $. Let $ a \neq 0 $. Then it holds that 
\begin{align}
E \ebra{ \e^{ - q G_{\{ a \}}(X_{\alpha }) } } 
= \frac{ \{ u^{(\alpha )}_q(0) \}^2 - \{ u^{(\alpha )}_q(a) \}^2}
{2 h^{(\alpha )}(a) u^{(\alpha )}_q(0)} 
\label{eq: LT of Ga Salpha}
\end{align}
and that 
\begin{align}
E \ebra{ \e^{ -q \Xi_{\{ a \}}(X_{\alpha }) } } 
= \frac{u^{(\alpha )}_q(a)}{u^{(\alpha )}_q(0)} 
\cdot \frac{2 h^{(\alpha )}(a) u^{(\alpha )}_q(0)}
{ \{ u^{(\alpha )}_q(0) \}^2 - \{ u^{(\alpha )}_q(a) \}^2} . 
\label{eq: LT of Xia Salpha}
\end{align}
\end{Thm}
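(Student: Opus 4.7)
The plan is to apply Proposition \ref{prop: decomp} and then to compute the excursion-theoretic quantities that appear on the right-hand sides in terms of the resolvent densities $u^{(\alpha)}_q$. Introduce the abbreviations
\begin{align}
A_q := \vn^{(\alpha)}\ebra{\e^{-qT_{\{a\}}};\, T_{\{a\}}<\zeta},
\qquad
B := \vn^{(\alpha)}(T_{\{a\}}<\zeta),
\qquad
C_q := \vn^{(\alpha)}\ebra{1-\e^{-q\zeta};\, T_{\{a\}}>\zeta}.
\label{}
\end{align}
Proposition \ref{prop: decomp} then gives
$E[\e^{-qG_{\{a\}}(X_{\alpha})}]=B/(B+C_q)$ and $E[\e^{-q\Xi_{\{a\}}(X_{\alpha})}]=A_q/B$,
so the task reduces to identifying $A_q$, $B$ and $B+C_q$.

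First I would apply the strong Markov property under $\vn^{(\alpha)}$ at the stopping time $T_{\{a\}}$, using that after hitting $a$ the remaining lifetime is (in law) the first hitting time of $0$ by $X_{\alpha}$ started at $a$, whose Laplace transform equals $u^{(\alpha)}_q(a)/u^{(\alpha)}_q(0)$ by \eqref{eq: LT of FHT} and the symmetry of $X_{\alpha}$. This yields
\begin{align}
\vn^{(\alpha)}\ebra{\e^{-q\zeta};\, T_{\{a\}}<\zeta}
=\frac{u^{(\alpha)}_q(a)}{u^{(\alpha)}_q(0)}\,A_q,
\label{}
\end{align}
and hence, by splitting $\vn^{(\alpha)}[1-\e^{-q\zeta}]=\psi(q)$ according to $\{T_{\{a\}}\lessgtr\zeta\}$,
\begin{align}
\psi(q)=C_q+B-\frac{u^{(\alpha)}_q(a)}{u^{(\alpha)}_q(0)}\,A_q.
\label{}
\end{align}
The exponent $\psi(q)$ of the inverse local time subordinator can be computed from the normalisation \eqref{eq: occ density}: since $E[\int_0^{\infty}\e^{-qt}\d L^{(\alpha)}(t)]=u^{(\alpha)}_q(0)$ and this equals $1/\psi(q)$ by the standard duality between $L^{(\alpha)}$ and $\tau$, one obtains $\psi(q)=1/u^{(\alpha)}_q(0)$.

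Next I would combine the above relation with the known identity
\begin{align}
\frac{A_q}{B+C_q}=E\ebra{\e^{-qT_{\{a\}}(X_{\alpha})}}=\frac{u^{(\alpha)}_q(a)}{u^{(\alpha)}_q(0)},
\label{}
\end{align}
which expresses $B+C_q=A_q u^{(\alpha)}_q(0)/u^{(\alpha)}_q(a)$. Substituting this in the displayed equation for $\psi(q)$ leads to the linear equation
\begin{align}
\frac{1}{u^{(\alpha)}_q(0)}
=A_q\cdot\frac{u^{(\alpha)}_q(0)^{2}-u^{(\alpha)}_q(a)^{2}}{u^{(\alpha)}_q(0)\,u^{(\alpha)}_q(a)},
\label{}
\end{align}
whence $A_q=u^{(\alpha)}_q(a)/(u^{(\alpha)}_q(0)^{2}-u^{(\alpha)}_q(a)^{2})$.

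The remaining ingredient, and the step I expect to be slightly delicate, is the identification of the constant $B=\vn^{(\alpha)}(T_{\{a\}}<\zeta)$. I would obtain it by monotone convergence as $q\downarrow 0+$: writing
\begin{align}
A_q=\frac{u^{(\alpha)}_q(a)/u^{(\alpha)}_q(0)}{h^{(\alpha)}_q(a)\cbra{1+u^{(\alpha)}_q(a)/u^{(\alpha)}_q(0)}},
\label{}
\end{align}
and invoking Lemma \ref{lem: h-func} ((iii) gives $u^{(\alpha)}_q(a)/u^{(\alpha)}_q(0)\to1$ and (ii) gives $h^{(\alpha)}_q(a)\to h^{(\alpha)}(a)$), one obtains $B=A_{0+}=1/(2h^{(\alpha)}(a))$. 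Plugging the explicit formulas for $A_q$, $B$ and $B+C_q$ into $A_q/B$ and $B/(B+C_q)$ respectively yields \eqref{eq: LT of Xia Salpha} and \eqref{eq: LT of Ga Salpha} after elementary simplification.
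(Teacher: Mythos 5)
Your proof is correct, and it follows a genuinely different and more elementary route than the paper. The paper obtains the key quantity $\vn^{(\alpha)}[\e^{-qT_{\{a\}}};T_{\{a\}}<\zeta]$ from Proposition \ref{prop: vn laplace transform 1}, which is proved by applying the Markov property at a small deterministic time $\eps$, rewriting under the harmonic transform $P^{h^{(\alpha)}}$ of Theorem \ref{thm: Y}, and then letting $\eps\to 0+$ using the delicate boundary limits of Theorem \ref{thm: Y2} (with the case split $1<\alpha<2$ versus $\alpha=2$). You bypass all of that machinery: applying the strong Markov property of $\vn^{(\alpha)}$ directly at the stopping time $T_{\{a\}}$ gives the factorization $\vn^{(\alpha)}[\e^{-q\zeta};T_{\{a\}}<\zeta]=\frac{u^{(\alpha)}_q(a)}{u^{(\alpha)}_q(0)}A_q$, and pairing this with the normalisation identity $\vn^{(\alpha)}[1-\e^{-q\zeta}]=1/u^{(\alpha)}_q(0)$ and the already-known formula $E[\e^{-qT_{\{a\}}(X_\alpha)}]=u^{(\alpha)}_q(a)/u^{(\alpha)}_q(0)$ yields a linear equation that determines $A_q$, after which $B$ follows by letting $q\downarrow 0$. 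This is a real simplification: no harmonic transform, no entrance-law limit, no case split at $\alpha=2$. Two small remarks. First, the strong Markov property of the excursion measure at the stopping time $T_{\{a\}}$ on $\{T_{\{a\}}<\zeta\}$ is a substantial fact that is not stated in this paper (which invokes the Markov property of $\vn^{(\alpha)}$ only at fixed times $\eps$); you should either cite it or give a short justification. Second, your argument recovers only the quantities $A_q$ and $B$, i.e.\ the specialisations $r=0$ and $q=0$ of Proposition \ref{prop: vn laplace transform 1}; that is all the theorem needs, but if one wants the full two-parameter formula \eqref{eq: vn laplace transform 1} one should note that your Markov-property factorisation combined with the expression for $A_q$ yields it immediately.
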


\begin{Rem}
The left hand sides of \eqref{eq: LT of Ga Salpha} and \eqref{eq: LT of Xia Salpha} 
are functions of $ q |a|^{\alpha } $ 
since 
\begin{align}
G_{\{ a \}}(X_{\alpha }) \law |a|^{\alpha } G_{\{ 1 \}}(X_{\alpha }) 
\qquad \text{and} \qquad 
\Xi_{\{ a \}}(X_{\alpha }) \law |a|^{\alpha } \Xi_{\{ 1 \}}(X_{\alpha }) . 
\label{}
\end{align}
We may check that so are the right hand sides 
by the following formulae: 
\begin{align}
u^{(\alpha )}_q(0) = |a|^{\alpha -1} u^{(\alpha )}_{q |a|^{\alpha }}(0) 
, \ 
u^{(\alpha )}_q(a) = |a|^{\alpha -1} u^{(\alpha )}_{q |a|^{\alpha }}(1) 
\ \text{and} \ 
h^{(\alpha )}(a) = |a|^{\alpha -1} h^{(\alpha )}(1) . 
\label{}
\end{align}
\end{Rem}

For the proof of Theorem \ref{thm: LT of Ga and Xia of Salpha}, 
we need the following proposition. 

\begin{Prop} \label{prop: vn laplace transform 1}
Suppose that $ X=X_{\alpha } $ with $ 1<\alpha \le 2 $. 
Let $ a \neq 0 $ and $ q,r>0 $. Then 
\begin{align}
\vn^{(\alpha )} \ebra{ \e^{ - q T_{\{ a \}} - r (\zeta-T_{\{ a \}}) } ; T_{\{ a \}} < \zeta } 
= \frac{u^{(\alpha )}_r(a)}{u^{(\alpha )}_r(0)} 
\cdot \frac{u^{(\alpha )}_q(a)}
{ \{ u^{(\alpha )}_q(0) \}^2 - \{ u^{(\alpha )}_q(a) \}^2 } . 
\label{eq: vn laplace transform 1}
\end{align}
Consequently, it holds that 
\begin{align}
\vn^{(\alpha )} \ebra{ \e^{ - q T_{\{ a \}} } ; T_{\{ a \}} < \zeta } 
= 
\frac{u^{(\alpha )}_q(a)}
{ \{ u^{(\alpha )}_q(0) \}^2 - \{ u^{(\alpha )}_q(a) \}^2 } 
\label{eq: vn laplace transform 2}
\end{align}
and that 
\begin{align}
\vn^{(\alpha )} ( T_{\{ a \}} < \zeta ) 
= \frac{ 1 }{2 h^{(\alpha )}(a)} . 
\label{eq: vn laplace transform 3}
\end{align}
\end{Prop}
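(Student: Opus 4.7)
The plan is to reduce \eqref{eq: vn laplace transform 1} to \eqref{eq: vn laplace transform 2} via the strong Markov property of $X_{\alpha }$ under $\vn^{(\alpha )}$, to obtain \eqref{eq: vn laplace transform 2} by matching it against the excursion decomposition of $T_{\{a\}}(X_{\alpha })$ supplied by Proposition \ref{prop: decomp}, and finally to deduce \eqref{eq: vn laplace transform 3} by letting $q\to 0+$.

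First I would observe that on the event $\{T_{\{a\}}<\zeta\}$ the excursion reaches $a$ before returning to the origin. Applying the strong Markov property at $T_{\{a\}}$ (which is legitimate under $\vn^{(\alpha )}$ because the restriction to $\{T_{\{a\}}<\zeta\}$ has finite total mass and may be normalised to a probability), the post-$T_{\{a\}}$ piece of the excursion has the law of $X_{\alpha }$ started at $a$ and killed at its first visit to $0$. Thus $\zeta-T_{\{a\}}$, conditionally on the past and on $\{T_{\{a\}}<\zeta\}$, is distributed as $T_{\{0\}}$ for $X_{\alpha }$ started at $a$, which by translation, symmetry and \eqref{eq: LT of FHT} has Laplace transform $u^{(\alpha )}_r(a)/u^{(\alpha )}_r(0)$. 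This yields the factorisation
\begin{equation*}
\vn^{(\alpha )}\bigl[\e^{-qT_{\{a\}}-r(\zeta-T_{\{a\}})};\,T_{\{a\}}<\zeta\bigr]
=\vn^{(\alpha )}\bigl[\e^{-qT_{\{a\}}};\,T_{\{a\}}<\zeta\bigr]\cdot\frac{u^{(\alpha )}_r(a)}{u^{(\alpha )}_r(0)},
\end{equation*}
so \eqref{eq: vn laplace transform 1} will follow as soon as \eqref{eq: vn laplace transform 2} is proved.

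Next I would multiply \eqref{eq: LT of GaX} and \eqref{eq: LT of XiaX} written for $\vn^{(\alpha )}$: after rewriting the denominator as $\psi^{(\alpha )}(q)+\vn^{(\alpha )}[\e^{-q\zeta};T_{\{a\}}<\zeta]$, with $\psi^{(\alpha )}(q):=\vn^{(\alpha )}[1-\e^{-q\zeta}]$ the Laplace exponent of the inverse local time $\tau^{(\alpha )}$, this gives
\begin{equation*}
E\bigl[\e^{-qT_{\{a\}}(X_{\alpha })}\bigr]
=\frac{\vn^{(\alpha )}[\e^{-qT_{\{a\}}};\,T_{\{a\}}<\zeta]}{\psi^{(\alpha )}(q)+\vn^{(\alpha )}[\e^{-q\zeta};\,T_{\{a\}}<\zeta]}.
\end{equation*}
The occupation-density formula $E\int_0^{\infty }\e^{-qt}\,\d L^{(\alpha )}(t)=u^{(\alpha )}_q(0)$ together with the time-change identity $E\int_0^{\infty }\e^{-qt}\,\d L^{(\alpha )}(t)=\int_0^{\infty }\e^{-l\psi^{(\alpha )}(q)}\,\d l=1/\psi^{(\alpha )}(q)$ gives $\psi^{(\alpha )}(q)=1/u^{(\alpha )}_q(0)$. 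Setting $r=q$ in the first display expresses $\vn^{(\alpha )}[\e^{-q\zeta};T_{\{a\}}<\zeta]$ in terms of $A:=\vn^{(\alpha )}[\e^{-qT_{\{a\}}};T_{\{a\}}<\zeta]$; writing also $u:=u^{(\alpha )}_q(0)$, $v:=u^{(\alpha )}_q(a)$ and using $E[\e^{-qT_{\{a\}}(X_{\alpha })}]=v/u$ from \eqref{eq: LT of FHT}, the second display collapses to the single equation $v/u=A/(1/u+Av/u)$, whose unique solution $A=v/(u^2-v^2)$ is precisely \eqref{eq: vn laplace transform 2}; the first display then gives \eqref{eq: vn laplace transform 1}.

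For \eqref{eq: vn laplace transform 3}, I would factor the denominator of \eqref{eq: vn laplace transform 2} as $h^{(\alpha )}_q(a)\bigl(u^{(\alpha )}_q(0)+u^{(\alpha )}_q(a)\bigr)$ and let $q\to 0+$; since $h^{(\alpha )}_q(a)\to h^{(\alpha )}(a)$ by \eqref{eq: hq limit} and $u^{(\alpha )}_q(a)/u^{(\alpha )}_q(0)\to 1$ by Lemma \ref{lem: h-func}(iii), the ratio tends to $1/(2h^{(\alpha )}(a))$, as required. The only delicate point is the strong Markov property under the $\sigma$-finite measure $\vn^{(\alpha )}$ at the stopping time $T_{\{a\}}$, but since $\vn^{(\alpha )}(T_{\{a\}}<\zeta)<\infty$ this reduces to It\^o's theorem applied to a finite measure, and the remaining manipulations are routine algebra.
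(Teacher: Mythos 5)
Your proof is correct, but it takes a genuinely different route from the paper's. The paper derives \eqref{eq: vn laplace transform 1} directly from the excursion measure by cutting at a small time $\eps$, passing to the harmonic transform $P^{h^{(\alpha)}}$ via Theorem~\ref{thm: Y}, using the boundary limits of Theorem~\ref{thm: Y2} to identify $\lim_{x\to 0}\varphi^q_{x\to a\prec 0}/h^{(\alpha)}(x)$, and then letting $\eps\to 0+$ by dominated convergence. You instead avoid the $h$-transform entirely: you first split off the post-$T_{\{a\}}$ factor $u^{(\alpha)}_r(a)/u^{(\alpha)}_r(0)$ using the Markov property of the excursion measure (which, as you note, follows from the construction of the excursion point process and the strong Markov property of $X_\alpha$ itself once one normalises the finite measure $\vn^{(\alpha)}(\cdot;T_{\{a\}}<\zeta)$), and then you \emph{invert} the decomposition of Proposition~\ref{prop: decomp}: combining \eqref{eq: LT of GaX}, \eqref{eq: LT of XiaX}, the identity $\psi^{(\alpha)}(q)=1/u^{(\alpha)}_q(0)$, and the classical formula $E[\e^{-qT_{\{a\}}(X_\alpha)}]=u^{(\alpha)}_q(a)/u^{(\alpha)}_q(0)$, you obtain a single linear equation for the unknown $A=\vn^{(\alpha)}[\e^{-qT_{\{a\}}};T_{\{a\}}<\zeta]$ and solve it. Note that in the paper the logical flow is the reverse: Proposition~\ref{prop: vn laplace transform 1} is established first (via $h$-transforms) and then plugged into Proposition~\ref{prop: decomp} to prove Theorem~\ref{thm: LT of Ga and Xia of Salpha}; you run that argument backwards, which is perfectly legitimate because the identity in Proposition~\ref{prop: decomp} determines $A$ once $E[\e^{-qT_{\{a\}}}]$ and the factorisation of $\zeta-T_{\{a\}}$ are known. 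Your approach is more elementary in that it sidesteps the delicate limits of Theorem~\ref{thm: Y2} and never touches $P^{h^{(\alpha)}}$, at the cost of having to appeal to the Markov property of $\vn^{(\alpha)}$ at the stopping time $T_{\{a\}}$ — a standard fact, but one the paper does not state explicitly. The derivation of \eqref{eq: vn laplace transform 3} by factoring the denominator as $h^{(\alpha)}_q(a)\{u^{(\alpha)}_q(0)+u^{(\alpha)}_q(a)\}$ and using Lemma~\ref{lem: h-func}(iii) is the same as in the paper.
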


\begin{proof}
Let us only prove the formula \eqref{eq: vn laplace transform 1}; 
in fact, from this formula we can obtain 
the formulae \eqref{eq: vn laplace transform 2} and \eqref{eq: vn laplace transform 3} 
immediately by the limit \eqref{eq: hq limit} and Lemma \ref{lem: h-func}. 

Let $ \eps>0 $. By the strong Markov property of $ \vn^{(\alpha )} $, we have 
\begin{align}
& \vn^{(\alpha )} \ebra{ \e^{ - q T_{\{ a \}} - r (\zeta-T_{\{ a \}}) } ; \ \eps < T_{\{ a \}} < \zeta } 
\label{eq: Ehalpha-} \\
=& \e^{-q \eps} \vn^{(\alpha )} \ebra{ 
\left. \cbra{ \varphi^q_{x \to a \prec 0} } \right|_{x=X(\eps)} 
\varphi^r_{0 \to a} 
; \ \eps < T_{\{ a \}} \wedge \zeta } 
\\
=& \e^{-q \eps} \varphi^r_{0 \to a} 
E^{h^{(\alpha )}} \ebra{ 
\left. \cbra{ \frac{\varphi^q_{x \to a \prec 0}}{h^{(\alpha )}(x)} } \right|_{x=X(\eps)} 
; \ \eps < T_{\{ a \}} } . 
\label{eq: Ehalpha}
\end{align}
Here we used Theorem \ref{thm: Y}. Note that 
\begin{align}
\begin{split}
& \frac{\varphi^q_{x \to a \prec 0}}{h^{(\alpha )}(x)} 
\cdot \kbra{ \{ u^{(\alpha )}_q(0) \}^2 - \{ u^{(\alpha )}_q(a) \}^2 } 
\\
=& 
u^{(\alpha )}_q(a) \cdot 
\frac{h^{(\alpha )}_q(x)}{h^{(\alpha )}(x) } 
- u^{(\alpha )}_q(0) \cdot 
\frac{u^{(\alpha )}_q(x-a) - u^{(\alpha )}_q(a)}{h^{(\alpha )}(x) } . 
\end{split}
\label{eq: varphiq xtoaprec0/hx}
\end{align}

Suppose that $ 1<\alpha <2 $. Then we see that 
the right hand side of \eqref{eq: varphiq xtoaprec0/hx} 
converges to $ u^{(\alpha )}_q(a) $ as $ x \to 0 $ 
by Theorem \ref{thm: Y2}. 
Letting $ \eps \to 0+ $ in the identity \eqref{eq: Ehalpha-}-\eqref{eq: Ehalpha}, 
we obtain the formula \eqref{eq: vn laplace transform 1} 
by the dominated convergence theorem. 

Suppose that $ \alpha =2 $. 
We may assume without loss of generality that $ a>0 $. 
Then we see that 
the quantity \eqref{eq: Ehalpha} is equal to 
\begin{align}
\frac{1}{2} \e^{-q \eps} \varphi^r_{0 \to a} 
E^{h^{(2)}} \ebra{ 
\left. \cbra{ \frac{\varphi^q_{x \to a \prec 0}}{h^{(2)}(x)} } \right|_{x=X(\eps)} 
; \ \eps < T_{\{ a \}} , \ X(\eps)>0 } ; 
\label{eq: Ehalpha+}
\end{align}
In fact, $ P^{h^{(2)}} $ is nothing else but the law of the symmetrisation of 
the 3-dimensional Bessel process starting from the origin. 
We also see that the right hand side of \eqref{eq: varphiq xtoaprec0/hx} 
converges to $ 2 u^{(\alpha )}_q(a) $ as $ x \to +0 $ 
by Theorem \ref{thm: Y2}. 
Hence, 
letting $ \eps \to 0+ $ in the identity \eqref{eq: Ehalpha-}-\eqref{eq: Ehalpha+}, 
we obtain the formula \eqref{eq: vn laplace transform 1} 
by the dominated convergence theorem. 
Now the proof is complete. 
\end{proof}

Now we proceed to prove Theorem \ref{thm: LT of Ga and Xia of Salpha}. 

\begin{proof}[Proof of Theorem \ref{thm: LT of Ga and Xia of Salpha}.]
Using the formulae \eqref{eq: vn laplace transform 3} 
and \eqref{eq: vn laplace transform 1} (with $ r=q $), 
we have 
\begin{align}
& \vn^{(\alpha )} \ebra{ 1-\e^{-q \zeta} ; \ T_{\{ a \}} > \zeta } 
\\
=& \vn^{(\alpha )} \ebra{ 1-\e^{-q \zeta} } 
- \vn^{(\alpha )}( T_{\{ a \}} < \zeta ) 
+ \vn^{(\alpha )} \ebra{ \e^{-q \zeta} ; \ T_{\{ a \}} < \zeta } 
\\
=& 
\frac{1}{u^{(\alpha )}_q(0)} 
- \frac{1}{2 h^{(\alpha )}(a)} 
+ \frac{u^{(\alpha )}_q(a)}{u^{(\alpha )}_q(0)} 
\cdot \frac{u^{(\alpha )}_q(a)}{ \{ u^{(\alpha )}_q(0) \}^2 - \{ u^{(\alpha )}_q(a) \}^2 } 
. 
\label{}
\end{align}
Hence we obtain 
\begin{align}
\frac{ \vn^{(\alpha )} \ebra{ 1-\e^{-q \zeta} ; \ T_{\{ a \}} > \zeta } }{\vn^{(\alpha )}(T_{\{ a \}} < \zeta)} 
= \frac{2 h^{(\alpha )}(a) u^{(\alpha )}_q(0)}{ \{ u^{(\alpha )}_q(0) \}^2 - \{ u^{(\alpha )}_q(a) \}^2} - 1 . 
\label{}
\end{align}
Combining this with the formula \eqref{eq: LT of GaX}, 
we obtain \eqref{eq: LT of Ga Salpha}. 

Using the formulae \eqref{eq: vn laplace transform 2} 
and \eqref{eq: vn laplace transform 3}, 
we have 
\begin{align}
\frac{\vn^{(\alpha )} \ebra{ \e^{-q T_{\{ a \}}} ; T_{\{ a \}} < \zeta } }{\vn^{(\alpha )}( T_{\{ a \}} < \zeta )} 
= \frac{2 h^{(\alpha )}(a) u^{(\alpha )}_q(a)}{ \{ u^{(\alpha )}_q(0) \}^2 - \{ u^{(\alpha )}_q(a) \}^2} . 
\label{}
\end{align}
Combining this with the formula \eqref{eq: LT of XiaX}, 
we obtain \eqref{eq: LT of Xia Salpha}. 
\end{proof}

\subsection{Overshoots at the first passage time of a level} 

For comparison with the description of the law of a first hitting time, 
we recall the law of the overshoot at the first passage time of a level. 
Let $ X_{\alpha } = (X_{\alpha }(t):t \ge 0) $ denote 
the symmetric stable L\'evy process of index $ 0 < \alpha \le 2 $ 
starting from the origin 
such that 
$ E[\e^{i \lambda X_{\alpha }(t)}] = \e^{-t|\lambda|^{\alpha }} $. 

Let us consider the {\em first passage time of level} $ a>0 $ for $ X_{\alpha } $: 
\begin{align}
T_{[a,\infty )}(X_{\alpha }) = \inf \{ t>0 : X_{\alpha }(t) \ge a \} . 
\label{}
\end{align}
The variable $ X_{\alpha }(T_{[a,\infty )}(X_{\alpha })) - a $ 
is the overshoot at the first hitting time of level $ a $. 

The following theorem is due to Ray \cite{MR0105178}, 
although he does not express his result like this: 

\begin{Thm}[\cite{MR0105178}]
Suppose that $ 0<\alpha \le 2 $. 
Let $ a>0 $. Then 
\begin{align}
X_{\alpha }(T_{[a,\infty )}(X_{\alpha })) -a 
\law a \frac{\cG_{1-\frac{\alpha }{2}}}{\hat{\cG}_{\frac{\alpha }{2}}} 
\label{}
\end{align}
where $ \cG_{1-\frac{\alpha }{2}} $ and $ \hat{\cG}_{\frac{\alpha }{2}} $ 
are independent gamma variables of indices $ 1-\frac{\alpha }{2} $ and $ \frac{\alpha }{2} $, 
respectively. 
\end{Thm}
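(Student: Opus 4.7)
The plan is to reduce to $a=1$ by self-similarity, use the Wiener--Hopf factorization of $X_\alpha$ to identify its ascending ladder height process with a stable subordinator of index $\alpha/2$, and finally compute the first-passage overshoot of such a subordinator directly.

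First, the scaling property $(c^{-1/\alpha} X_\alpha(c\,\cdot)) \law X_\alpha$ gives $X_\alpha(T_{[a,\infty)}(X_\alpha)) - a \law a\bigl( X_\alpha(T_{[1,\infty)}(X_\alpha)) - 1 \bigr)$, so only $a=1$ needs treatment. The case $\alpha=2$ is trivial (the overshoot vanishes, as $X_2$ is continuous, and $\cG_0 \equiv 0$), so assume $0<\alpha<2$. At time $T_{[1,\infty)}$ the process jumps strictly over level $1$, hence the overshoot coincides with the first-passage overshoot of the ascending ladder height process. By the Wiener--Hopf factorization for the symmetric stable L\'evy process, this ladder height process is, up to a deterministic time-change, a stable subordinator $Y$ of index $\beta = \alpha/2$.

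Next, I would compute the first-passage overshoot of $Y$. Its renewal measure has density $v^{\beta-1}/\Gamma(\beta)$ on $(0,\infty)$ and its L\'evy measure is $\beta\, \Gamma(1-\beta)^{-1} x^{-1-\beta}\,dx$. The standard subordinator formula (overshoot built from the undershoot distributed by the potential density, plus the jump distributed by the L\'evy measure) yields
\begin{align*}
P\bigl( Y(T_{[1,\infty)}(Y)) - 1 \in dy \bigr)
= \frac{\beta}{\Gamma(\beta)\Gamma(1-\beta)} \left[ \int_0^1 u^{\beta-1}(1-u+y)^{-1-\beta}\, du \right] dy.
\end{align*}
The inner integral evaluates to $y^{-\beta}/[\beta(1+y)]$ (which one checks either by the substitution $u = (1+y)s$ followed by a tangent-half-angle argument, or simply by differentiating the candidate primitive in $y$). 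Using Euler's reflection formula $\Gamma(\beta)\Gamma(1-\beta) = \pi/\sin(\pi \beta)$, the overshoot density simplifies to
\begin{align*}
\frac{\sin(\pi \beta)}{\pi} \cdot \frac{y^{-\beta}}{1+y}, \qquad y>0.
\end{align*}

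Finally, by the explicit density for ratios of gammas recalled in Section \ref{sec: alpha Cauchy}, taking parameters $1-\beta$ and $\beta$ (so their sum is $1$), the density of $\cG_{1-\beta}/\hat{\cG}_\beta$ is exactly $[B(1-\beta,\beta)]^{-1} y^{-\beta}/(1+y) = \sin(\pi\beta)/\pi \cdot y^{-\beta}/(1+y)$. Setting $\beta = \alpha/2$ and restoring the scaling factor $a$ completes the proof. The main obstacle is the appeal to Wiener--Hopf in the second paragraph, which converts the two-sided problem for $X_\alpha$ into the one-sided problem for a stable subordinator of index $\alpha/2$; an alternative, more self-contained route would be to apply the Pecherskii--Rogozin identity directly to $X_\alpha$ using the resolvent $u^{(\alpha)}_q$ of Section \ref{sec: harm}, although this is computationally longer.
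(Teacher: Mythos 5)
Your argument is correct. The paper itself does not prove this theorem: it is stated with a citation to Ray's 1958 paper and the remark that Ray ``does not express his result like this'', with a further pointer to Blumenthal--Getoor--Ray for the multidimensional analogue. So there is no internal proof to compare against, and your derivation fills a gap the authors deliberately left.

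Your route is the now-standard one: the ascending ladder height of a symmetric $\alpha$-stable process is, up to a linear time-change, a stable subordinator of index $\alpha\rho = \alpha/2$ (since the positivity parameter is $\rho = 1/2$ by symmetry), first passage over a level does not creep when $\alpha<2$ (the ladder subordinator has no drift), hence the overshoot of $X_\alpha$ over $1$ coincides in law with the overshoot of the ladder subordinator; the L\'evy-system / renewal-measure formula then gives the density, the inner integral closes via the substitution $u = (1+y)v$ together with the antiderivative $\tfrac{1}{\beta}\bigl(\tfrac{v}{1-v}\bigr)^\beta$ of $v^{\beta-1}(1-v)^{-1-\beta}$, and Euler's reflection identity matches the result to the $\cG_{1-\beta}/\hat{\cG}_{\beta}$ density from Section~\ref{sec: alpha Cauchy}. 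Each of these steps checks out; in particular the overshoot law is indeed insensitive to the normalization of the ladder subordinator, since rescaling its Laplace exponent (or time-changing) does not alter its range, so the ``up to a deterministic time-change'' caveat is harmless. Ray's original argument, by contrast, is potential-theoretic: he works with Riesz kernels and harmonic measure for the stable process killed outside an interval, which is the same circle of ideas as the two-point Laplace transform computations in Section~\ref{sec: hitting} via $u^{(\alpha)}_q$ and the capacitary measure. Your suggested Pecherskii--Rogozin alternative would recast this in the same resolvent language used elsewhere in the paper, at the cost of a longer Laplace inversion. One minor presentational point: when handling $\alpha=2$ you should interpret $\cG_0$ as the weak limit $\cG_a \to 0$ as $a\to 0+$ (the gamma density is not defined at parameter $0$), which is the obvious degenerate reading and consistent with the vanishing overshoot of Brownian motion.
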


For its multidimensional analogue, 
see Blumenthal--Getoor--Ray \cite{MR0126885}.

\section{First hitting time of a single point for $ |X_{\alpha }| $}
\label{sec: hitting2}

\subsection{The case of one-dimensional reflecting Brownian motion}

We consider the first hitting time of $ a>0 $ 
for the reflecting Brownian motion $ |B|=(|B|(t)) $: 
\begin{align}
T_{\{ a \}}(|B|) =& \inf \{ t>0 : |B(t)| = a \} 
= T_{\{ a \}}(B) \wedge T_{\{ -a \}}(B) 
. 
\label{}
\end{align}
It is well-known (see, e.g., \cite[Prop.II.3.7]{MR1725357}) 
that the law of the hitting time is of {\SD} type 
where its Laplace transforms is given as follows: 
\begin{align}
E \ebra{ \e^{i \theta \hat{B}(T_{\{ a \}}(|B|))} } 
=& 
E \ebra{ \e^{- \frac{1}{2} \theta^2 T_{\{ a \}}(|B|)} } 
= 
\frac{1}{\cosh (a \theta)} 
, \qquad \theta \in \bR . 
\label{eq: Brownian identity in law 2}
\end{align}
The identity \eqref{eq: Brownian identity in law 2} can be expressed as 
\begin{align}
\hat{B}(T_{\{ a \}}(|B|)) \law a \bC_1 \law 2 a \cM_0 
, \qquad 
T_{\{ a \}}(|B|) \law a^2 C_1 . 
\label{eq: Brownian identity in law 2'}
\end{align}
Noting that 
\begin{align}
\frac{1}{\cosh (a \theta)} 
= \frac{2 \e^{-a |\theta|}}{1+\e^{-2 a |\theta|}} 
= 2 \e^{-a |\theta|} \sum_{n=0}^{\infty } (-1)^n \e^{ -2na |\theta| } , 
\label{}
\end{align}
we have the following expansion: 
\begin{align}
E \ebra{ \e^{- q T_{\{ a \}}(|B|)} } 
= 2 \sum_{n=0}^{\infty } (-1)^n E \ebra{ \e^{- q T_{\{ (2n+1)a \}}(B)} } 
, \qquad q>0 . 
\label{eq: expans for BM}
\end{align}

Let us consider the random times $ G_{\{ a \}}(|B|) $ and $ \Xi_{\{ a \}}(|B|) $. 
By means of random time-change, 
Williams' path decomposition (Theorem \ref{thm: Williams}) 
is also valid for the reflecting Brownian motion $ |B| $ instead of $ B $. 
Hence we may compute the Laplace transforms of these variables as follows: 
\begin{align}
E \ebra{ \e^{ -q G_{\{ a \}}(|B|) } } 
= \int_0^a \frac{\d m}{a} \cbra{ E \ebra{ \e^{- q T_{\{ m \}}(|B|)} } }^2 
= \frac{\tanh (\sqrt{2 q} a)}{\sqrt{2 q} a} 
, \qquad q>0 
\label{}
\end{align}
and 
\begin{align}
E \ebra{ \e^{ -q \Xi_{\{ a \}}(|B|) } } 
= E \ebra{ \e^{ -q T_{\{ a \}}(R) } } 
= \frac{\sqrt{2 q} |a|}{\sinh (\sqrt{2 q} |a|)} 
, \qquad q>0 . 
\label{}
\end{align}
In other words, we have 
\begin{align}
G_{\{ a \}}(|B|) \law a^2 T_1 
, \qquad 
\Xi_{\{ a \}}(|B|) 
\law T_{\{ a \}}(R) 
\law a^2 S_1 . 
\label{}
\end{align}

\subsection{Discussions about the Laplace transform of $ T_{\{ a \}}(|X_{\alpha }|) $}

Let us consider the first hitting time of point $ a>0 $ 
for $ |X_{\alpha }| $ of index $ 1<\alpha \le 2 $: 
\begin{align}
T_{\{ a \}}(|X_{\alpha }|) 
=& \inf \{ t>0 : |X_{\alpha }(t)| = a \} 
= T_{\{ a \}}(X_{\alpha }) \wedge T_{\{ -a \}}(X_{\alpha }) 
. 
\label{}
\end{align}

The following theorem generalises 
the Laplace transform formula \eqref{eq: Brownian identity in law 2} 
and the expansion \eqref{eq: expans for BM}. 

\begin{Thm}
Suppose that $ 1 < \alpha \le 2 $. Let $ a \in \bR $. Then 
\begin{align}
E \ebra{ \e^{ -q T_{\{ a \}}(|X_{\alpha }|) } } 
=& \frac{2 u^{(\alpha )}_q(a)}{u^{(\alpha )}_q(0) + u^{(\alpha )}_q(2a)} 
\label{eq: LT Ta |Salpha|} \\
=& 2 \sum_{n=0}^{\infty } (-1)^n 
E \ebra{ \e^{ -q \kbra{ T_{\{ a \}}(X_{\alpha }) 
+ T_{\{ 2a \}}(X_{\alpha }^{(1)}) + \cdots + T_{\{ 2a \}}(X_{\alpha }^{(n)}) } } } 
\label{eq: LT Ta |Salpha|2}
\end{align}
where $ X_{\alpha }^{(1)},\ldots,X_{\alpha }^{(n)},\ldots $ are independent copies 
of $ X_{\alpha } $. 
\end{Thm}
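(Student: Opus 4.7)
The plan is to derive both formulas by direct specialisation and manipulation of the results already established in Section \ref{sec: hitting}.

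For the first identity \eqref{eq: LT Ta |Salpha|}, I would observe that
\begin{align}
T_{\{ a \}}(|X_{\alpha}|) = T_{\{ a \}}(X_{\alpha}) \wedge T_{\{ -a \}}(X_{\alpha}), \notag
\end{align}
so this is precisely the first hitting time of the two-point set $\{a,-a\}$ for the process $X_\alpha$ starting from the origin. Applying Proposition \ref{prop: hitting time of a and b} with $x=0$, $b=-a$, and using the symmetry $u^{(\alpha)}_q(-a) = u^{(\alpha)}_q(a)$ (immediate from the integral representation \eqref{eq: p alpha} involving $\cos(x\xi)$), one gets
\begin{align}
E \ebra{ \e^{-qT_{\{a\}}(|X_{\alpha}|)} } = \varphi^q_{0 \to a, -a} = \frac{u^{(\alpha)}_q(a) + u^{(\alpha)}_q(-a)}{u^{(\alpha)}_q(0) + u^{(\alpha)}_q(2a)} = \frac{2 u^{(\alpha)}_q(a)}{u^{(\alpha)}_q(0) + u^{(\alpha)}_q(2a)}. \notag
\end{align}

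For the alternating series expansion \eqref{eq: LT Ta |Salpha|2}, I would rewrite
\begin{align}
\frac{2 u^{(\alpha)}_q(a)}{u^{(\alpha)}_q(0) + u^{(\alpha)}_q(2a)} = 2 \cdot \frac{u^{(\alpha)}_q(a)}{u^{(\alpha)}_q(0)} \cdot \frac{1}{1+u^{(\alpha)}_q(2a)/u^{(\alpha)}_q(0)} \notag
\end{align}
and expand the last factor as a geometric series. Since $T_{\{2a\}}(X_\alpha)>0$ almost surely, the Laplace transform formula \eqref{eq: LT of FHT} gives $u^{(\alpha)}_q(2a)/u^{(\alpha)}_q(0) = E[\e^{-qT_{\{2a\}}(X_\alpha)}] \in (0,1)$ for $q>0$, so the geometric series converges absolutely. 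Combining with the analogous identity $u^{(\alpha)}_q(a)/u^{(\alpha)}_q(0) = E[\e^{-qT_{\{a\}}(X_\alpha)}]$ and using independence of the copies $X_\alpha, X_\alpha^{(1)}, X_\alpha^{(2)}, \ldots$ to turn products of Laplace transforms into Laplace transforms of sums yields \eqref{eq: LT Ta |Salpha|2} term by term.

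There is no real obstacle here: both steps are straightforward consequences of Proposition \ref{prop: hitting time of a and b} and \eqref{eq: LT of FHT}, together with symmetry of $u^{(\alpha)}_q$ and a convergent geometric expansion. The only point worth verifying carefully is the convergence of the series, which is guaranteed by $u^{(\alpha)}_q(2a) < u^{(\alpha)}_q(0)$ for $q>0$; this can also be seen directly from the integral formula for $u^{(\alpha)}_q$ since $\cos(2a\xi) < 1$ on a set of positive measure.
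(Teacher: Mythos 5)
Your argument is correct and matches the paper's own proof essentially line for line: the first identity is Proposition \ref{prop: hitting time of a and b} at $x=0$, $b=-a$ (with symmetry of $u^{(\alpha)}_q$), and the second comes from the geometric expansion in $u^{(\alpha)}_q(2a)/u^{(\alpha)}_q(0)$ together with \eqref{eq: LT of FHT} and the product-to-convolution dictionary for independent Laplace transforms. Your added remark on convergence, via $u^{(\alpha)}_q(2a)<u^{(\alpha)}_q(0)$ for $q>0$ and $a\neq0$, is a sound justification that the paper leaves implicit.
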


\begin{proof}
Applying Proposition \ref{prop: hitting time of a and b} 
with $ x=0 $ and $ b=-a $, we obtain the first identity \eqref{eq: LT Ta |Salpha|}. 
Expanding the right hand side, we have 
\begin{align}
E \ebra{ \e^{-q T_{\{ a \}}(|X_{\alpha }|)} } 
= \frac{2 u^{(\alpha )}_q(a)}{u^{(\alpha )}_q(0)} 
\sum_{n=0}^{\infty } (-1)^n \kbra{ \frac{u^{(\alpha )}_q(2a)}{u^{(\alpha )}_q(0)} }^n . 
\label{}
\end{align}
Using the formula \eqref{eq: LT of hitting time}, we may rewrite the identity as 
\begin{align}
E \ebra{ \e^{-q T_{\{ a \}}(|X_{\alpha }|)} } 
= 2 E \ebra{ \e^{-q T_{\{ a \}}(X_{\alpha }) } } 
\sum_{n=0}^{\infty } (-1)^n \kbra{ E \ebra{ \e^{-q T_{\{ 2a \}}(X_{\alpha }) } } }^n . 
\label{}
\end{align}
This is nothing else but the second identity \eqref{eq: LT Ta |Salpha|2}. 
\end{proof}

In the case of Brownian motion $ B=(B(t)) $ on one hand, we have 
\begin{align}
T_{\{ a \}}(B) 
+ T_{\{ 2a \}}(B^{(1)}) + \cdots + T_{\{ 2a \}}(B^{(n)}) 
\law 
T_{\{ (2n+1)a \}}(B) 
\label{}
\end{align}
where $ B^{(1)},\ldots,B^{(n)} $ are independent copies of $ B $. 
In the case of symmetric $ \alpha $-stable process $ X_{\alpha } = (X_{\alpha }(t)) $ 
for $ 1<\alpha <2 $ on the other hand, however, 
the law of the sum 
\begin{align}
T_{\{ a \}}(X_{\alpha }) 
+ T_{\{ 2a \}}(X_{\alpha }^{(1)}) + \cdots + T_{\{ 2a \}}(X_{\alpha }^{(n)}) 
\label{}
\end{align}
differs from that of $ T_{\{ (2n+1)a \}}(X_{\alpha }) $. 
In fact, we have the following theorem. 

\begin{Thm}
Suppose that $ 1 < \alpha \le 2 $. Let $ a \in \bR $. 
Then, for any $ q>0 $ and $ n \ge 1 $, 
\begin{align}
E \ebra{ \e^{ -q \kbra{ T_{\{ a \}}(X_{\alpha }) 
+ T_{\{ 2a \}}(X_{\alpha }^{(1)}) + \cdots + T_{\{ 2a \}}(X_{\alpha }^{(n)}) } } } 
< E \ebra{ \e^{ -q T_{\{ (2n+1)a \}}(X_{\alpha }) } } . 
\label{}
\end{align}
\end{Thm}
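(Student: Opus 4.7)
The strategy is to reduce the claim to a one-step comparison and then to telescope. Assuming without loss of generality $a>0$ (the case $a<0$ follows from the symmetry $X_\alpha \law -X_\alpha$), factor the left-hand side by independence as
\begin{align*}
E \ebra{ \e^{-qT_{\{a\}}(X_\alpha)} } \cdot \cbra{ E \ebra{ \e^{-qT_{\{2a\}}(X_\alpha)} } }^n.
\end{align*}
It then suffices to prove the one-step lemma
\begin{align*}
E \ebra{ \e^{-qT_{\{c\}}(X_\alpha)} } \cdot E \ebra{ \e^{-qT_{\{d\}}(X_\alpha)} } < E \ebra{ \e^{-qT_{\{c+d\}}(X_\alpha)} } \qquad (c,d>0,\ 1<\alpha<2)
\end{align*}
and to iterate it for the successive pairs $(c,d)=(a,2a),(3a,2a),\ldots,((2n{-}1)a,2a)$, which telescopes to $E[\e^{-qT_{\{(2n+1)a\}}(X_\alpha)}]$. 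Since each $E[\e^{-qT_{\{ka\}}}]$ is strictly positive, even a single strict step is inherited by the chain.

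The one-step lemma itself would be established via the strong Markov property at $T_{\{c\}}(X_\alpha)$. An independent copy of $T_{\{d\}}(X_\alpha)$ has the same distribution as $T_{\{c+d\}}(X_\alpha^{(T_{\{c\}})})$, where $X_\alpha^{(T_{\{c\}})}$ denotes the post-$T_{\{c\}}$ process (which starts from $c$ and is independent of $\cF_{T_{\{c\}}}$), so the product of the Laplace transforms realises on a single path as
\begin{align*}
E\ebra{\e^{-q\cbra{T_{\{c\}}(X_\alpha)+T_{\{c+d\}}(X_\alpha^{(T_{\{c\}})})}}}.
\end{align*}
Compare this sum pathwise with $T_{\{c+d\}}(X_\alpha)$: on $\{T_{\{c\}}(X_\alpha)\le T_{\{c+d\}}(X_\alpha)\}$ the strong Markov property forces equality, whereas on $\{T_{\{c+d\}}(X_\alpha)<T_{\{c\}}(X_\alpha)\}$ the sum strictly exceeds $T_{\{c+d\}}(X_\alpha)$ because $T_{\{c+d\}}(X_\alpha^{(T_{\{c\}})})>0$ almost surely. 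Consequently the product of Laplace transforms is $\le E[\e^{-qT_{\{c+d\}}(X_\alpha)}]$, with strict inequality as soon as $P(T_{\{c+d\}}(X_\alpha)<T_{\{c\}}(X_\alpha))>0$.

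The main obstacle, and the last step, is to verify this strict positivity. Applying Getoor's formula (recalled in the remark following Proposition \ref{prop: hitting time of a before b}) with starting point $x=0$ yields
\begin{align*}
P(T_{\{c+d\}}(X_\alpha)<T_{\{c\}}(X_\alpha)) = \frac{1}{2}\kbra{1+\frac{c^{\alpha-1}-(c+d)^{\alpha-1}}{d^{\alpha-1}}},
\end{align*}
so the positivity is equivalent to the strict subadditivity $(c+d)^{\alpha-1}<c^{\alpha-1}+d^{\alpha-1}$. For $1<\alpha<2$ the exponent $\alpha-1$ lies in $(0,1)$, so $x\mapsto x^{\alpha-1}$ is strictly concave on $[0,\infty)$ and vanishes at the origin; hence the subadditivity holds strictly for all $c,d>0$. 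At $\alpha=2$ the subadditivity degenerates to equality, reflecting the fact that continuous Brownian paths must pass through every intermediate level, so the strict inequality in the theorem is genuine precisely in the range $1<\alpha<2$.
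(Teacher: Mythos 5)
Your proof is correct and takes a genuinely different route from the paper's. The paper proves the strict inequality by setting $D_n = \varphi^q_{0\to(2n+1)a} - \varphi^q_{0\to(2n-1)a}\varphi^q_{0\to 2a}$ and algebraically manipulating the decomposition $\varphi^q_{x\to a} = \varphi^q_{x\to a\prec b} + \varphi^q_{x\to b\prec a}\varphi^q_{b\to a}$ (with translation invariance and symmetry) to obtain the factorisation $D_n = \varphi^q_{0\to(2n+1)a\prec(2n-1)a}\bigl(1-(\varphi^q_{0\to 2a})^2\bigr)$, whose two factors are both strictly positive. You instead isolate a one-step lemma $E[\e^{-qT_{\{c\}}}]E[\e^{-qT_{\{d\}}}] < E[\e^{-qT_{\{c+d\}}}]$, telescope it, and prove the lemma by a pathwise coupling via the strong Markov property at $T_{\{c\}}$ together with Getoor's $q\to 0$ formula, so that the strict inequality is ultimately seen as strict subadditivity of $x\mapsto x^{\alpha-1}$. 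Both routes hinge on the same underlying phenomenon — that for $\alpha<2$ the process can jump past an intermediate level, i.e.\ $P(T_{\{(2n+1)a\}}<T_{\{(2n-1)a\}})>0$ from the origin — but yours makes the mechanism explicit probabilistically, while the paper's keeps the argument purely at the level of the $q$-resolvent formulae. Your approach also makes transparent exactly where the inequality degenerates to equality; note that the paper's theorem as stated allows $\alpha=2$ and $a\in\bR$, yet its own proof requires $\varphi^q_{0\to(2n+1)a\prec(2n-1)a}>0$, which fails for $\alpha=2$ (and trivially for $a=0$), so the strict version really holds only for $1<\alpha<2$ and $a\neq 0$, exactly as you observe and as the paper's own preamble to the theorem indicates.
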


\begin{proof}
Set 
\begin{align}
D_n = E \ebra{ \e^{ -q T_{\{ (2n+1)a \}}(X_{\alpha }) } } 
- E \ebra{ \e^{ -q T_{\{ (2n-1)a \}}(X_{\alpha }) } } 
E \ebra{ \e^{ -q T_{\{ 2a \}}(X_{\alpha }) } } . 
\label{}
\end{align}
Then it suffices to prove that $ D_n>0 $ for all $ n \ge 1 $. 

Let us keep the notations in the proof of Proposition \ref{prop: hitting time of a and b}. 
Note that 
\begin{align}
D_n 
= \varphi^q_{0 \to (2n+1)a} 
- \varphi^q_{0 \to (2n-1)a} \varphi^q_{0 \to 2a} . 
\label{eq: Dn 1}
\end{align}
Using the formula \eqref{eq: varphi x to a} and the translation invariance, we have 
\begin{align}
\varphi^q_{0 \to (2n+1)a} 
=& \varphi^q_{0 \to (2n+1)a \prec (2n-1)a} 
+ \varphi^q_{0 \to (2n-1)a \prec (2n+1)a} \varphi^q_{(2n-1)a \to (2n+1)a} 
\\
=& \varphi^q_{0 \to (2n+1)a \prec (2n-1)a} 
+ \varphi^q_{0 \to (2n-1)a \prec (2n+1)a} \varphi^q_{0 \to 2a} . 
\label{}
\end{align}
Using the formula \eqref{eq: varphi x to a}, the translation invariance, and the symmetry, 
we have 
\begin{align}
\varphi^q_{0 \to (2n-1)a} 
= \varphi^q_{0 \to (2n-1)a \prec (2n+1)a} 
+ \varphi^q_{0 \to (2n+1)a \prec (2n-1)a} \varphi^q_{0 \to 2a} . 
\label{}
\end{align}
Hence we obtain 
\begin{align}
D_n 
= \varphi^q_{0 \to (2n+1)a \prec (2n-1)a} \kbra{ 1 - \cbra{\varphi^q_{0 \to 2a}}^2 } , 
\label{eq: Dn 2}
\end{align}
which turns out to be positive 
because both $ \varphi^q_{0 \to (2n+1)a \prec (2n-1)a} $ and 
$ \varphi^q_{0 \to 2a} $ are positive and less than 1. 
Now the proof is complete. 
\end{proof}

\begin{Rem}
The consistency of the two formulae \eqref{eq: Dn 1} and \eqref{eq: Dn 2} 
can be confirmed by the formulae 
\eqref{eq: LT of hitting time} and \eqref{eq: hitting time of a before b} as follows: 
\begin{align}
& \varphi^q_{0 \to (2n+1)a \prec (2n-1)a} \kbra{ 1 - \cbra{\varphi^q_{0 \to 2a}}^2 } 
\\
=& \frac{u^{(\alpha )}_q(0) u^{(\alpha )}_q((2n+1)a) 
- u^{(\alpha )}_q(2a) u^{(\alpha )}_q((2n-1)a)}
{ \{ u^{(\alpha )}_q(0) \}^2 - \{ u^{(\alpha )}_q(2a) \}^2 } 
\cdot \kbra{ 1- \cbra{ \frac{u^{(\alpha )}_q(2a)}{u^{(\alpha )}_q(0)} }^2 } 
\\
=& \frac{u^{(\alpha )}_q((2n+1)a)}{u^{(\alpha )}_q(0)} 
- \frac{u^{(\alpha )}_q((2n-1)a)}{u^{(\alpha )}_q(0)} 
\cdot \frac{u^{(\alpha )}_q(2a)}{u^{(\alpha )}_q(0)} 
\\
=& \varphi^q_{0 \to (2n+1)a} 
- \varphi^q_{0 \to (2n-1)a} \varphi^q_{0 \to 2a} . 
\label{}
\end{align}
\end{Rem}

\subsection{The Laplace transforms of 
$ G_{\{ a \}}(|X_{\alpha }|) $ and $ \Xi_{\{ a \}}(|X_{\alpha }|) $}

Since $ |X_{\alpha }|=(|X_{\alpha }(t)|:t \ge 0) $ is a strong Markov process, 
the arguments of Section \ref{sec: Decomp at LET} 
are valid for $ X=|X_{\alpha }| $. 
Let us compute the Laplace transforms of 
$ G_{\{ a \}}(|X_{\alpha }|) $ and $ \Xi_{\{ a \}}(|X_{\alpha }|) $. 

\begin{Thm} \label{thm: LT of Ga and Xia of |Salpha|}
Suppose that $ 1<\alpha \le 2 $. Let $ a > 0 $. Then it holds that 
\begin{align}
E \ebra{ \e^{ - q G_{\{ a \}}(|X_{\alpha }|) } } 
= \frac{ 2 V^{(\alpha )}_q(a) }{ \{ u^{(\alpha )}_q(0) + u^{(\alpha )}_q(2a) \} 
\{ 4h^{(\alpha )}(a)-h^{(\alpha )}(2a) \} } 
\label{eq: LT of Ga |Salpha|}
\end{align}
and that 
\begin{align}
E \ebra{ \e^{ -q \Xi_{\{ a \}}(|X_{\alpha }|) } } 
= \frac{ u^{(\alpha )}_q(a) \{ 4h^{(\alpha )}(a)-h^{(\alpha )}(2a) \} }{ V^{(\alpha )}_q(a) } 
\label{eq: LT of Xia |Salpha|}
\end{align}
where 
\begin{align}
V^{(\alpha )}_q(a) := 
\{ u^{(\alpha )}_q(0) \}^2 
+ u^{(\alpha )}_q(0) u^{(\alpha )}_q(2a) - 2 \{ u^{(\alpha )}_q(a) \}^2 . 
\label{}
\end{align}
\end{Thm}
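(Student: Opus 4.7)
The plan is to apply Proposition~\ref{prop: decomp} with $X=|X_\alpha|$, whose excursions away from $0$ are in bijection with those of $X_\alpha$; under this identification It\^o's measure is still $\vn^{(\alpha )}$ and the event $\{T_{\{a\}}(X) < \zeta\}$ becomes $\{T_{\{a,-a\}}(X_\alpha) < \zeta\}$. Thus formulas \eqref{eq: LT of Ga |Salpha|} and \eqref{eq: LT of Xia |Salpha|}, via \eqref{eq: LT of GaX}--\eqref{eq: LT of XiaX}, reduce to computing the three It\^o quantities
\begin{align*}
\vn^{(\alpha )}(T_{\{a,-a\}} < \zeta),\quad \vn^{(\alpha )}\bigl[e^{-qT_{\{a,-a\}}}; T_{\{a,-a\}} < \zeta\bigr],\quad \vn^{(\alpha )}\bigl[1-e^{-q\zeta}; T_{\{a,-a\}} > \zeta\bigr].
\end{align*}

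The main step will be to prove the two-point analog of Proposition~\ref{prop: vn laplace transform 1}:
\begin{align*}
\vn^{(\alpha )}\bigl[e^{-qT_{\{a,-a\}} - r(\zeta - T_{\{a,-a\}})}; T_{\{a,-a\}} < \zeta\bigr] = \frac{u^{(\alpha )}_r(a)}{u^{(\alpha )}_r(0)} \cdot \frac{2u^{(\alpha )}_q(a)}{V^{(\alpha )}_q(a)}.
\end{align*}
Following the pattern of Proposition~\ref{prop: vn laplace transform 1}, the strong Markov property at $T_{\{a,-a\}}$ extracts a factor $u^{(\alpha )}_r(a)/u^{(\alpha )}_r(0)$ regardless of whether $X$ hits $+a$ or $-a$ (by the symmetry $\varphi^r_{a \to 0} = \varphi^r_{-a \to 0}$); the strong Markov property at an auxiliary time $\eps>0$ together with the $h$-transform of Theorem~\ref{thm: Y} then reduces matters to computing $\lim_{x \to 0} \varphi^q_{x \to \{a,-a\} \prec 0}/h^{(\alpha )}(x)$.

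To derive an explicit formula for $\varphi^q_{x \to \{a,-a\} \prec 0}$, I will mimic the proof of Proposition~\ref{prop: hitting time of a before b}, using
\begin{align*}
\varphi^q_{x \to \{a,-a\} \prec 0} = \frac{\varphi^q_{x \to \{a,-a\}} - \varphi^q_{0 \to \{a,-a\}}\,\varphi^q_{x \to \{a,-a,0\}}}{1 - \varphi^q_{0 \to \{a,-a\}}},
\end{align*}
with Proposition~\ref{prop: hitting time of a and b} for the two-point transforms and the symmetric three-point capacitary system for $\varphi^q_{x \to \{a,-a,0\}} = C^q[u^{(\alpha )}_q(x-a)+u^{(\alpha )}_q(x+a)] + C^q_0 u^{(\alpha )}_q(x)$; the quantity $V^{(\alpha )}_q(a)$ appears as minus the determinant of that $2\times 2$ system. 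A short rearrangement then yields
\begin{align*}
V^{(\alpha )}_q(a)\, \varphi^q_{x \to \{a,-a\} \prec 0} = 2u^{(\alpha )}_q(a)\, h^{(\alpha )}_q(x) - u^{(\alpha )}_q(0)\bigl\{[u^{(\alpha )}_q(a) - u^{(\alpha )}_q(x-a)] + [u^{(\alpha )}_q(a) - u^{(\alpha )}_q(x+a)]\bigr\}.
\end{align*}
Dividing by $h^{(\alpha )}(x)$ and letting $x \to 0$: Theorem~\ref{thm: Y2}(i) handles the first term, and Theorem~\ref{thm: Y2}(iia) kills each bracketed difference for $1<\alpha<2$; for $\alpha = 2$ the two bracketed terms do not vanish individually but their one-sided limits cancel by Theorem~\ref{thm: Y2}(iib), so the limit is $2u^{(\alpha )}_q(a)/V^{(\alpha )}_q(a)$ in all cases.

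Finally, taking the appropriate specializations ($r\to 0$ and $q\to 0$) and using Lemma~\ref{lem: h-func}(i),(iii) together with the elementary identities
\begin{align*}
V^{(\alpha )}_q(a) = u^{(\alpha )}_q(0)[4h^{(\alpha )}_q(a) - h^{(\alpha )}_q(2a)] - 2\{h^{(\alpha )}_q(a)\}^2, \qquad V^{(\alpha )}_q(a) + 2\{u^{(\alpha )}_q(a)\}^2 = u^{(\alpha )}_q(0)[u^{(\alpha )}_q(0)+u^{(\alpha )}_q(2a)],
\end{align*}
gives $\vn^{(\alpha )}(T_{\{a,-a\}} < \zeta) = 2/[4h^{(\alpha )}(a) - h^{(\alpha )}(2a)]$ together with the other two It\^o quantities; substitution into \eqref{eq: LT of GaX}--\eqref{eq: LT of XiaX} then produces the stated formulas. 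The main obstacle will be the $\alpha=2$ case of the limit: the individual terms $[u^{(2)}_q(a) - u^{(2)}_q(a \pm x)]/h^{(2)}(x)$ have opposite, nonzero one-sided limits, and the cancellation must be justified by splitting the expectation on $\{X(\eps)>0\}$ and $\{X(\eps)<0\}$ and invoking the Bessel-process representation of $P^{h^{(2)}}$, exactly as in the $\alpha=2$ branch of the proof of Proposition~\ref{prop: vn laplace transform 1}.
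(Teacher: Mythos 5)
Your plan follows the paper's route essentially step by step: reduce via Proposition~\ref{prop: decomp} applied to $|X_\alpha|$ to the three It\^o quantities, observe that $\vm^{(\alpha)}[\,\cdot\,;T_{\{a\}}<\zeta]$ can be read off from $\vn^{(\alpha)}[\,\cdot\,;T_{\{a,-a\}}<\zeta]$, compute $\varphi^q_{x\to \{a,-a\}\prec 0}$ from the two- and three-point capacitary systems (giving $V^{(\alpha)}_q(a)$ as the determinant), normalize by $h^{(\alpha)}(x)$, and pass to the limit using Theorem~\ref{thm: Y2}. This is exactly Proposition~\ref{prop: vn laplace transform v2} in the paper, after which the theorem itself is read off as for Theorem~\ref{thm: LT of Ga and Xia of Salpha}. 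Your algebraic identities for $V^{(\alpha)}_q(a)$ and the $q\to 0+$ limit are correct.

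The one place where you overestimate the difficulty is the $\alpha=2$ case. You correctly observe that the one-sided limits of $[u^{(2)}_q(a)-u^{(2)}_q(a\mp x)]/h^{(2)}(x)$ from the two terms cancel, giving the two-sided limit $0$ for the symmetric combination $[u^{(\alpha)}_q(a-x)+u^{(\alpha)}_q(a+x)-2u^{(\alpha)}_q(a)]/h^{(\alpha)}(x)$. But then you claim one still needs the $\{X(\eps)>0\}/\{X(\eps)<0\}$ split and the Bessel representation of $P^{h^{(2)}}$, as in the one-point case. This is unnecessary: precisely because of the cancellation, the \emph{two-sided} limit of the deterministic function $\varphi^q_{x\to\{a,-a\}\prec 0}/h^{(\alpha)}(x)$ exists and equals $2u^{(\alpha)}_q(a)/V^{(\alpha)}_q(a)$ for all $1<\alpha\le 2$, so dominated convergence applies directly. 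The split was needed in Proposition~\ref{prop: vn laplace transform 1} only because the one-sided limits there ($\pm\e^{-\sqrt{q}|a|}$) differ, which is not the case here; the paper notes this explicitly (``in whichever case where $1<\alpha<2$ or $\alpha=2$''). So your proof is correct, with redundant extra work in that one step.
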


For the proof of Theorem \ref{thm: LT of Ga and Xia of |Salpha|}, 
we need a certain Laplace transform formula 
for first hitting time of three points. 
Avoiding unnecessary generality, 
we are satisfied with the following special case: 

\begin{Prop} \label{prop: hitting time of 0,a,-a}
Suppose that $ 1<\alpha \le 2 $. Let $ x,a \in \bR $. 
Then 
\begin{align}
\varphi^q_{x \to 0,a,-a} 
:=& E \ebra{ \e^{ - q T_{\{ 0,a,-a \}}(X^x_{\alpha }) } } 
\\
=& C^q_{0 \prec a,-a} u_q(x) + C^q_{a \prec 0,-a} u_q(x-a) + C^q_{-a \prec 0,a} u_q(x+a) 
\label{eq: hitting time of 0,a,-a 1}
\end{align}
where 
\begin{align}
C^q_{0 \prec a,-a} = \frac{ u^{(\alpha )}_q(0) + u^{(\alpha )}_q(2a) - 2 u^{(\alpha )}_q(a) }
{V^{(\alpha )}_q(a)} 
\label{eq: hitting time of 0,a,-a 2}
\end{align}
and 
\begin{align}
C^q_{a \prec 0,-a} = 
C^q_{-a \prec 0,a} = 
\frac{ u^{(\alpha )}_q(0) - u^{(\alpha )}_q(a) }
{V^{(\alpha )}_q(a)} . 
\label{eq: hitting time of 0,a,-a 3}
\end{align}
\end{Prop}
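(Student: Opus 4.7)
The plan is to imitate the proof of Proposition \ref{prop: hitting time of a and b}, extending the capacitary-measure argument from two points to the three-point set $F = \{0,a,-a\}$, and then to determine the three resulting coefficients by a small linear system derived from the boundary condition $T^x_F = 0$ a.s.\ for $x \in F$.

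First I would apply the formula \eqref{eq: LT of TxF formula}, which is valid for any closed set $F$, and note that $X^z_{\alpha}(T^z_F) \in \{0,a,-a\}$ almost surely. This produces the representation \eqref{eq: hitting time of 0,a,-a 1}, i.e.\
\begin{align}
\varphi^q_{x \to 0,a,-a} = C^q_{0 \prec a,-a} u^{(\alpha)}_q(x) + C^q_{a \prec 0,-a} u^{(\alpha)}_q(x-a) + C^q_{-a \prec 0,a} u^{(\alpha)}_q(x+a),
\end{align}
where the coefficients are (as in the two-point case) given by integrals against the capacitary measure, in particular positive and independent of $x$.

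Next I would pin down the three coefficients. Evaluating at the three points of $F$ and using the fact that $T^x_F=0$ a.s.\ when $x \in F$, together with the symmetry $u^{(\alpha)}_q(-y)=u^{(\alpha)}_q(y)$ coming from \eqref{eq: p alpha}, gives the three identities
\begin{align}
1 &= C^q_{0 \prec a,-a}\,u^{(\alpha)}_q(0) + (C^q_{a \prec 0,-a}+C^q_{-a \prec 0,a})\,u^{(\alpha)}_q(a), \\
1 &= C^q_{0 \prec a,-a}\,u^{(\alpha)}_q(a) + C^q_{a \prec 0,-a}\,u^{(\alpha)}_q(0) + C^q_{-a \prec 0,a}\,u^{(\alpha)}_q(2a), \\
1 &= C^q_{0 \prec a,-a}\,u^{(\alpha)}_q(a) + C^q_{a \prec 0,-a}\,u^{(\alpha)}_q(2a) + C^q_{-a \prec 0,a}\,u^{(\alpha)}_q(0).
\end{align}
Subtracting the last two forces $C^q_{a \prec 0,-a} = C^q_{-a \prec 0,a}$ (which is also clear by the symmetry of the law of $X_{\alpha}$ under $x \mapsto -x$).

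The remaining two equations are a $2 \times 2$ linear system in $C^q_{0 \prec a,-a}$ and the common value $C^q_{\pm a}$. Solving it, the coefficient of $C^q_{\pm a}$ after elimination becomes precisely
\begin{align}
u^{(\alpha)}_q(0)\bigl(u^{(\alpha)}_q(0)+u^{(\alpha)}_q(2a)\bigr) - 2\bigl(u^{(\alpha)}_q(a)\bigr)^2 = u^{(\alpha)}_q(0)\,V^{(\alpha)}_q(a),
\end{align}
which yields formula \eqref{eq: hitting time of 0,a,-a 3}; substituting back then produces $C^q_{0 \prec a,-a}=(u^{(\alpha)}_q(0)+u^{(\alpha)}_q(2a)-2u^{(\alpha)}_q(a))/V^{(\alpha)}_q(a)$, i.e.\ formula \eqref{eq: hitting time of 0,a,-a 2}.

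The only potentially delicate point is to verify that $V^{(\alpha)}_q(a)\ne 0$, so that the system is non-degenerate; this is not a genuine obstacle, since the representation \eqref{eq: hitting time of 0,a,-a 1} combined with uniqueness of the capacitary measure forces the $2\times 2$ coefficient matrix of the reduced system to be invertible. The rest is purely algebraic bookkeeping.
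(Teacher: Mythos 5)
Your approach is exactly the one the paper intends (and explicitly indicates but omits): apply the capacitary-measure identity \eqref{eq: LT of TxF formula} with $F=\{0,a,-a\}$, note that the capacitary measure is supported on the three points so that \eqref{eq: hitting time of 0,a,-a 1} follows with non-negative coefficients, and then determine the coefficients from the boundary conditions $\varphi^q_{x\to 0,a,-a}=1$ at $x\in F$ together with the symmetry $u^{(\alpha)}_q(-y)=u^{(\alpha)}_q(y)$. Your three equations and the reduction to a $2\times 2$ system via $C^q_{a\prec 0,-a}=C^q_{-a\prec 0,a}$ are correct. One small slip: after elimination the coefficient of the common value $C^q_{\pm a}$ equals
\begin{align}
u^{(\alpha)}_q(0)\bigl(u^{(\alpha)}_q(0)+u^{(\alpha)}_q(2a)\bigr)-2\bigl(u^{(\alpha)}_q(a)\bigr)^2 = V^{(\alpha)}_q(a),
\end{align}
not $u^{(\alpha)}_q(0)\,V^{(\alpha)}_q(a)$ as you wrote; your final formulas \eqref{eq: hitting time of 0,a,-a 2}--\eqref{eq: hitting time of 0,a,-a 3} are nevertheless the correct solutions of the system (the $2\times2$ determinant is exactly $V^{(\alpha)}_q(a)$). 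On non-degeneracy, rather than the slightly circular appeal to uniqueness of the capacitary measure, it is cleaner to observe that $u^{(\alpha)}_q$ is a strictly positive-definite function (Fourier transform of the strictly positive density $(q+|\xi|^{\alpha})^{-1}$), so the Gram matrix $\bigl(u^{(\alpha)}_q(x_i-x_j)\bigr)$ on $\{0,a,-a\}$ is positive definite; its determinant equals $\{u^{(\alpha)}_q(0)-u^{(\alpha)}_q(2a)\}\,V^{(\alpha)}_q(a)$, whence $V^{(\alpha)}_q(a)>0$ for $a\neq 0$.
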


The proof of 
Proposition \ref{prop: hitting time of 0,a,-a} 
is similar to that of 
Proposition \ref{prop: hitting time of a and b} 
based on the identity \eqref{eq: LT of TxF formula} 
with $ F = \{ 0,a,-a \} $, 
so we omit it. 

\begin{Prop} \label{prop: hitting time of a,-a before 0}
Suppose that $ 1<\alpha \le 2 $. Let $ x,a \in \bR $ with $ a \neq 0 $. 
Then 
\begin{align}
\varphi^q_{x \to a,-a \prec 0} 
:=& E \ebra{ \e^{ - q T_{\{ a,-a \}}(X^x_{\alpha }) } 
; T_{\{ a,-a \}}(X^x_{\alpha }) < T_{\{ 0 \}}(X^x_{\alpha }) } 
\\
=& 
\frac{ \varphi^q_{x \to a,-a} - \varphi^q_{0 \to a,-a} \varphi^q_{x \to 0,a,-a} }
{ 1 - \varphi^q_{0 \to a,-a} } 
\\
=& 
\frac{ 
u^{(\alpha )}_q(0) \kbra{ u^{(\alpha )}_q(x-a) + u^{(\alpha )}_q(x+a) } 
- 2 u^{(\alpha )}_q(a) u^{(\alpha )}_q(x) }
{V^{(\alpha )}_q(a)}. 
\label{eq: hitting time of a before 0,-a}
\end{align}
\end{Prop}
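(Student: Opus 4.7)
The plan is to follow exactly the same strategy as in the proof of Proposition \ref{prop: hitting time of a before b} (where the first hitting of two points was split at the intermediate point), adapted to the three-point hitting set $\{0,a,-a\}$ with $\{a,-a\}$ playing the role of the target.

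First, on the event $\{T^x_{\{a,-a\}} > T^x_{\{0\}}\}$, the strong Markov property applied at the stopping time $T^x_{\{0\}}$ gives the decomposition
\begin{align}
T^x_{\{a,-a\}} = T^x_{\{0\}} + T^x_{\{a,-a\}} \circ \theta_{T^x_{\{0\}}},
\n
\end{align}
so that
\begin{align}
E\ebra{ \e^{-qT^x_{\{a,-a\}}}; T^x_{\{a,-a\}} > T^x_{\{0\}} } = \varphi^q_{x \to 0 \prec a,-a} \cdot \varphi^q_{0 \to a,-a}.
\n
\end{align}
Combining this with the obvious identities
\begin{align}
\varphi^q_{x \to a,-a} = \varphi^q_{x \to a,-a \prec 0} + \varphi^q_{x \to 0 \prec a,-a}\, \varphi^q_{0 \to a,-a},
\qquad
\varphi^q_{x \to 0,a,-a} = \varphi^q_{x \to a,-a \prec 0} + \varphi^q_{x \to 0 \prec a,-a},
\n
\end{align}
I eliminate $\varphi^q_{x \to 0 \prec a,-a}$ between the two equations and solve for $\varphi^q_{x \to a,-a \prec 0}$, which yields the first displayed expression in \eqref{eq: hitting time of a before 0,-a}.

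Next I substitute the explicit formulas: $\varphi^q_{x \to a,-a}$ and $\varphi^q_{0 \to a,-a} = 2u^{(\alpha)}_q(a)/[u^{(\alpha)}_q(0)+u^{(\alpha)}_q(2a)]$ from Proposition \ref{prop: hitting time of a and b} (taking $b=-a$ and using $u^{(\alpha)}_q(-2a) = u^{(\alpha)}_q(2a)$ by symmetry), and $\varphi^q_{x \to 0,a,-a}$ from Proposition \ref{prop: hitting time of 0,a,-a}. Writing $S = u^{(\alpha)}_q(0)+u^{(\alpha)}_q(2a)$ and $T = S - 2u^{(\alpha)}_q(a)$, the factor $1 - \varphi^q_{0 \to a,-a} = T/S$ cancels neatly after multiplying through by $V^{(\alpha)}_q(a)$.

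The main obstacle is purely the algebraic simplification at the end, and it hinges on the identity
\begin{align}
V^{(\alpha)}_q(a) - 2u^{(\alpha)}_q(a)\bigl( u^{(\alpha)}_q(0) - u^{(\alpha)}_q(a) \bigr) = u^{(\alpha)}_q(0)\cdot T,
\n
\end{align}
which factors out the common $T$ in numerator and denominator and produces precisely $u^{(\alpha)}_q(0)\{u^{(\alpha)}_q(x-a)+u^{(\alpha)}_q(x+a)\} - 2u^{(\alpha)}_q(a) u^{(\alpha)}_q(x)$ over $V^{(\alpha)}_q(a)$. Once this factorization is verified, the desired form \eqref{eq: hitting time of a before 0,-a} follows immediately.
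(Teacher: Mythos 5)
Your proof is correct and follows exactly the approach the paper intends: the paper omits the proof, stating only that it is ``similar to that of Proposition~\ref{prop: hitting time of a before b},'' and your argument is precisely the natural adaptation, with the two-point set $\{a,-a\}$ playing the role of the target point and $\{0\}$ playing the role of the obstructing point. The strong Markov decomposition at $T^x_{\{0\}}$, the elimination of $\varphi^q_{x\to 0\prec a,-a}$ from the two identities, and the algebraic factorization $V^{(\alpha)}_q(a) - 2u^{(\alpha)}_q(a)\bigl(u^{(\alpha)}_q(0)-u^{(\alpha)}_q(a)\bigr) = u^{(\alpha)}_q(0)\bigl(u^{(\alpha)}_q(0)+u^{(\alpha)}_q(2a)-2u^{(\alpha)}_q(a)\bigr)$ all check out.
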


The proof of 
Proposition \ref{prop: hitting time of a,-a before 0} 
is similar to that of 
Proposition \ref{prop: hitting time of a before b}, 
so we omit it.

Let $ \vm^{(\alpha )} $ denote It\^o's measure for $ |X_{\alpha }| $ 
corresponding to the local time satisfying \eqref{eq: occ density}. 
The following proposition is crucial 
to the proof of Theorem \ref{thm: LT of Ga and Xia of |Salpha|}. 

\begin{Prop} \label{prop: vn laplace transform v2}
Suppose that $ 1<\alpha \le 2 $. 
Let $ a > 0 $ and $ q,r>0 $. Then 
\begin{align}
\vm^{(\alpha )} \ebra{ \e^{ - q T_{\{ a \}} - r (\zeta-T_{\{ a \}}) } ; T_{\{ a \}} < \zeta } 
= \frac{ u^{(\alpha )}_r(a) }{ u^{(\alpha )}_r(0) } 
\cdot \frac{ 2 u^{(\alpha )}_q(a) }{ V^{(\alpha )}_q(a) } . 
\label{eq: vn laplace transform v2-1}
\end{align}
Consequently, it holds that 
\begin{align}
\vm^{(\alpha )} \ebra{ \e^{ - q T_{\{ a \}} } ; T_{\{ a \}} < \zeta } 
= \frac{ 2 u^{(\alpha )}_q(a) }{ V^{(\alpha )}_q(a) } 
\label{eq: vn laplace transform v2-2}
\end{align}
and that 
\begin{align}
\vm^{(\alpha )} ( T_{\{ a \}} < \zeta ) 
= \frac{ 2 }{ 4h^{(\alpha )}(a)-h^{(\alpha )}(2a) } . 
\label{eq: vn laplace transform v2-3}
\end{align}
\end{Prop}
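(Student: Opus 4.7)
The plan is to follow the template of the proof of Proposition \ref{prop: vn laplace transform 1}, with Proposition \ref{prop: hitting time of a,-a before 0} playing the role that Proposition \ref{prop: hitting time of a before b} played there. The starting observation is that each excursion of $|X_{\alpha}|$ away from $0$ is the absolute value of an excursion of $X_{\alpha}$ away from $0$, and both processes carry the same local time as normalised in \eqref{eq: occ density}; hence $\vm^{(\alpha)}$ is the pushforward of $\vn^{(\alpha)}$ under the absolute value map. Since $T_{\{a\}}$ for $|X_{\alpha}|$ is the same random time as $T_{\{a,-a\}}$ for $X_{\alpha}$, the identity \eqref{eq: vn laplace transform v2-1} is equivalent to
\begin{align*}
\vn^{(\alpha)}\ebra{\e^{-qT_{\{a,-a\}}-r(\zeta-T_{\{a,-a\}})};T_{\{a,-a\}}<\zeta}
= \frac{u^{(\alpha)}_r(a)}{u^{(\alpha)}_r(0)}\cdot\frac{2u^{(\alpha)}_q(a)}{V^{(\alpha)}_q(a)}.
\end{align*}

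For $\eps>0$, I would apply the strong Markov property of $\vn^{(\alpha)}$ at time $\eps$. On $\{\eps<T_{\{a,-a\}}<\zeta\}$ the path after $\eps$ is distributed as $X_{\alpha}$ started at $X(\eps)$; invoking the strong Markov property once more at $T_{\{a,-a\}}$, together with the symmetry $u^{(\alpha)}_r(a)=u^{(\alpha)}_r(-a)$, extracts a factor $u^{(\alpha)}_r(a)/u^{(\alpha)}_r(0)$ from the residual lifetime and leaves
\begin{align*}
\e^{-q\eps}\frac{u^{(\alpha)}_r(a)}{u^{(\alpha)}_r(0)}\vn^{(\alpha)}\ebra{\left.\varphi^q_{x\to a,-a\prec 0}\right|_{x=X(\eps)};\eps<T_{\{a,-a\}}\wedge\zeta}.
\end{align*}
Theorem \ref{thm: Y} then rewrites the $\vn^{(\alpha)}$-expectation as an $E^{h^{(\alpha)}}$-expectation of $\varphi^q_{x\to a,-a\prec 0}/h^{(\alpha)}(x)$ at $x=X(\eps)$, restricted to $\eps<T_{\{a,-a\}}$.

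The heart of the argument is the pointwise limit $\lim_{x\to 0}\varphi^q_{x\to a,-a\prec 0}/h^{(\alpha)}(x) = 2u^{(\alpha)}_q(a)/V^{(\alpha)}_q(a)$. Using \eqref{eq: hitting time of a before 0,-a} together with the evenness of $u^{(\alpha)}_q$, I decompose
\begin{align*}
\frac{\varphi^q_{x\to a,-a\prec 0}}{h^{(\alpha)}(x)}
= \frac{u^{(\alpha)}_q(0)}{V^{(\alpha)}_q(a)}\cdot\frac{[u^{(\alpha)}_q(a-x)-u^{(\alpha)}_q(a)]+[u^{(\alpha)}_q(a+x)-u^{(\alpha)}_q(a)]}{h^{(\alpha)}(x)}
+ \frac{2u^{(\alpha)}_q(a)}{V^{(\alpha)}_q(a)}\cdot\frac{h^{(\alpha)}_q(x)}{h^{(\alpha)}(x)}.
\end{align*}
For $1<\alpha<2$, Theorem \ref{thm: Y2} (iia) sends both difference quotients in the bracket to $0$, while Theorem \ref{thm: Y2} (i) sends the last ratio to $1$. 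For $\alpha=2$, the one-sided limits furnished by Theorem \ref{thm: Y2} (iib) are $\pm\e^{-\sqrt{q}|a|}$ for the first summand and $\mp\e^{-\sqrt{q}|a|}$ for the second, so they cancel pairwise from either side; the last ratio still tends to $1$. Then sending $\eps\to 0+$ and applying dominated convergence, with uniform control provided by Lemma \ref{lem: uq dif estim} (for $1<\alpha<2$) or by the explicit formulae \eqref{eq: u(2)}, \eqref{eq: h(2)} (for $\alpha=2$), yields \eqref{eq: vn laplace transform v2-1}.

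The identities \eqref{eq: vn laplace transform v2-2} and \eqref{eq: vn laplace transform v2-3} then follow at once. Sending $r\to 0+$ in \eqref{eq: vn laplace transform v2-1} and using Lemma \ref{lem: h-func} (iii) gives \eqref{eq: vn laplace transform v2-2}. For \eqref{eq: vn laplace transform v2-3}, substituting $u^{(\alpha)}_q(y)=u^{(\alpha)}_q(0)-h^{(\alpha)}_q(y)$ rewrites
\begin{align*}
V^{(\alpha)}_q(a) = u^{(\alpha)}_q(0)[4h^{(\alpha)}_q(a)-h^{(\alpha)}_q(2a)]-2[h^{(\alpha)}_q(a)]^2;
\end{align*}
dividing numerator and denominator of $2u^{(\alpha)}_q(a)/V^{(\alpha)}_q(a)$ by $u^{(\alpha)}_q(0)$ and letting $q\to 0+$ produces the limit $2/(4h^{(\alpha)}(a)-h^{(\alpha)}(2a))$, since $u^{(\alpha)}_q(0)\to\infty$ by Lemma \ref{lem: h-func} (i) whereas $h^{(\alpha)}_q\to h^{(\alpha)}$ pointwise by \eqref{eq: hq limit}. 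The principal technical point is the $\alpha=2$ case of the key limit, where the two non-trivial one-sided limits must be tracked carefully so that their cancellation is visible from both $x\to 0+$ and $x\to 0-$, analogous to the restriction to $\{X(\eps)>0\}$ needed in the proof of Proposition \ref{prop: vn laplace transform 1}.
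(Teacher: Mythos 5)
Your proof is correct and follows essentially the same route as the paper: reduce $\vm^{(\alpha)}$ to $\vn^{(\alpha)}$ via $T_{\{a\}}\mapsto T_{\{a,-a\}}$, apply the strong Markov property at $\eps$, rewrite via the $h^{(\alpha)}$-transform (Theorem \ref{thm: Y}), compute $\lim_{x\to 0}\varphi^q_{x\to a,-a\prec 0}/h^{(\alpha)}(x)=2u^{(\alpha)}_q(a)/V^{(\alpha)}_q(a)$ using Proposition \ref{prop: hitting time of a,-a before 0} and Theorem \ref{thm: Y2}, and finish by letting $\eps\to 0+$, $r\to 0+$, $q\to 0+$. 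One small remark on your last sentence: the symmetric combination $u^{(\alpha)}_q(a-x)+u^{(\alpha)}_q(a+x)-2u^{(\alpha)}_q(a)$ is what makes the one-sided contributions cancel for $\alpha=2$, so \emph{no} restriction of the type $\{X(\eps)>0\}$ is actually needed here; this is a simplification relative to the proof of Proposition \ref{prop: vn laplace transform 1}, not an analogue of it.
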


\begin{proof}[Proof of Proposition \ref{prop: vn laplace transform v2}]
By definitions of $ \vn^{(\alpha )} $ and $ \vm^{(\alpha )} $, we have 
\begin{align}
\vm^{(\alpha )} \ebra{ \e^{ - q T_{\{ a \}} - r (\zeta-T_{\{ a \}}) } ; T_{\{ a \}} < \zeta } 
= \vn^{(\alpha )} \ebra{ \e^{ - q T_{\{ a,-a \}} - r (\zeta-T_{\{ a,-a \}}) } 
; T_{\{ a,-a \}} < \zeta } . 
\label{eq: vn laplace transform v2 PF1}
\end{align}
Let $ \eps>0 $. 
Then we have 
\begin{align}
& \vn^{(\alpha )} \ebra{ \e^{ - q T_{\{ a,-a \}} - r (\zeta-T_{\{ a,-a \}}) } 
; \eps < T_{\{ a,-a \}} < \zeta } 
\\
=& \e^{-q \eps} 
\vn^{(\alpha )} \ebra{ \left. \cbra{ \varphi^q_{x \to a,-a \prec 0} } \right|_{x=X(\eps)} 
\cdot \varphi^r_{a \to 0} 
; \eps < T_{\{ a,-a \}} \wedge \zeta } 
\\
=& \e^{-q \eps} \varphi^r_{a \to 0} 
E^{h^{(\alpha )}} \ebra{ \left. \cbra{ 
\frac{\varphi^q_{x \to a,-a \prec 0}}{h^{(\alpha )}(x)} } \right|_{x=X(\eps)} 
; \eps < T_{\{ a,-a \}} } . 
\label{eq: Ehalpha2}
\end{align}
Here we utilised Theorem \ref{thm: Y}. 
Noting that, by Theorem \ref{thm: Y2}, we have 
\begin{align}
\lim_{x \to 0} 
\frac{u^{(\alpha )}_q(a-x) + u^{(\alpha )}_q(a+x) - 2 u^{(\alpha )}_q(a) }{h^{(\alpha )}(x)} 
= 0 
\label{}
\end{align}
in whichever case where $ 1<\alpha <2 $ or $ \alpha =2 $. 
Hence, 
we utilise Proposition \ref{prop: hitting time of a,-a before 0} 
and obtain 
\begin{align}
\lim_{x \to 0} \frac{\varphi^q_{x \to a,-a \prec 0}}{h^{(\alpha )}(x)} 
= \frac{ 2 u^{(\alpha )}_q(a) }{ V^{(\alpha )}_q(a) } . 
\label{}
\end{align}

Thus, letting $ \eps \to 0+ $ in the formula \eqref{eq: Ehalpha2}, 
we obtain \eqref{eq: vn laplace transform v2-1} by the dominated convergence theorem. 
By letting $ r \to 0+ $ in the formula \eqref{eq: vn laplace transform v2-1}, 
we obtain \eqref{eq: vn laplace transform v2-2}. 
Noting that 
\begin{align}
V^{(\alpha )}_q(a) = 
2 \kbra{ u^{(\alpha )}_q(0) + u^{(\alpha )}_q(a) } 
h^{(\alpha )}_q(a) 
- u^{(\alpha )}_q(0) h^{(\alpha )}_q(2a) , 
\label{}
\end{align}
we have 
\begin{align}
\lim_{q \to 0+} \frac{ 2 u^{(\alpha )}_q(a) }{ V^{(\alpha )}_q(a) } 
= \frac{2}{4h^{(\alpha )}(a)-h^{(\alpha )}(2a)} . 
\label{}
\end{align}
Hence, by letting $ q \to 0+ $ in the formula \eqref{eq: vn laplace transform v2-2}, 
we obtain \eqref{eq: vn laplace transform v2-3}. 
Now the proof is complete. 
\end{proof}

The proof of Theorem \ref{thm: LT of Ga and Xia of |Salpha|} 
is now completely parallel to 
that of Theorem \ref{thm: LT of Ga and Xia of Salpha}. 
Thus we omit the proof of Theorem \ref{thm: LT of Ga and Xia of |Salpha|}.

\section{Appendix: Computation of the constant $ h^{(\alpha )}(1) $}

\begin{Prop} \label{prop: const}
For $ 1 < \alpha <3 $, it holds that 
\begin{align}
\frac{1}{\pi} \int_0^{\infty } \frac{1-\cos x}{x^{\alpha }} \d x 
= \frac{1}{2 \Gamma (\alpha ) \sin \frac{\pi (\alpha -1)}{2}} . 
\label{eq: int 1-cos x / x alpha }
\end{align}
\end{Prop}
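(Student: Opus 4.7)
The plan is to integrate by parts and reduce to one of the classical Mellin transforms
$\int_0^{\infty } x^{s-1} \sin x \, \d x = \Gamma(s) \sin \frac{\pi s}{2}$ (valid for $0<s<1$) and
$\int_0^{\infty } x^{s-1} \cos x \, \d x = \Gamma(s) \cos \frac{\pi s}{2}$ (valid for $0<s<1$), then use reflection and double-angle identities to recast the answer in the form stated.

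First I would treat the range $1<\alpha<2$ directly. Integration by parts with $u=1-\cos x$ and $\d v = x^{-\alpha} \d x$ produces the boundary function $-(1-\cos x) x^{1-\alpha}/(\alpha-1)$, which vanishes at both endpoints since near $0$ it is $\sim x^{3-\alpha}/2 \to 0$ (as $\alpha<3$) and near $\infty$ it is $O(x^{1-\alpha}) \to 0$ (as $\alpha>1$). This yields
\begin{align}
\int_0^{\infty } \frac{1-\cos x}{x^{\alpha }} \d x = \frac{1}{\alpha-1} \int_0^{\infty } \frac{\sin x}{x^{\alpha -1}} \d x = \frac{\Gamma(2-\alpha) \sin \frac{\pi(2-\alpha)}{2}}{\alpha -1}.
\notag
\end{align}
Now I would apply the reflection identity $\Gamma(\alpha) \Gamma(1-\alpha) = \pi / \sin \pi \alpha$ together with $\Gamma(2-\alpha) = (1-\alpha)\Gamma(1-\alpha)$, the doubling $\sin \pi \alpha = 2\sin \frac{\pi \alpha}{2} \cos \frac{\pi \alpha}{2}$, the identity $\sin \frac{\pi(2-\alpha)}{2} = \sin \frac{\pi \alpha}{2}$, and finally $\cos \frac{\pi \alpha}{2} = -\sin \frac{\pi(\alpha -1)}{2}$. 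After the cancellations the right-hand side collapses to $\pi/\{2 \Gamma(\alpha) \sin \frac{\pi(\alpha-1)}{2}\}$, which on division by $\pi$ gives the claim for $1<\alpha<2$.

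To extend to $1<\alpha<3$ I would observe that the integral on the left depends holomorphically on $\alpha$ in the strip $1<\Re \alpha<3$ (uniform convergence on compacts is immediate from the pointwise bounds $|1-\cos x| \le \min(x^2/2, 2)$), while the right-hand side is manifestly meromorphic there, with no pole inside the strip since $\sin \frac{\pi(\alpha-1)}{2}$ vanishes only at $\alpha=1$. Both sides being holomorphic on $1<\Re \alpha<3$ and agreeing on $(1,2)$, the identity principle extends the equality to $1<\alpha<3$. Alternatively, for $2<\alpha<3$ one can perform a second integration by parts (the boundary terms $\sin x \cdot x^{2-\alpha}$ vanishing at both endpoints) to reduce to $\int_0^{\infty } \cos x / x^{\alpha-2} \d x = \Gamma(3-\alpha) \cos \frac{\pi(3-\alpha)}{2}$ and then match constants; continuity handles $\alpha = 2$ (where the right-hand side of the claim equals $1/2$, consistent with the elementary $\int_0^{\infty}(1-\cos x)/x^2 \, \d x = \pi/2$).

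The only subtle point is verifying the vanishing of the boundary terms and, in the analytic-continuation route, the uniform convergence needed to conclude holomorphy of the left-hand side. Both are routine; the trigonometric identity $\cos \frac{\pi \alpha}{2} = -\sin \frac{\pi(\alpha-1)}{2}$ is the step that rewrites the usual cosine denominator in the canonical form chosen by the authors, and I expect no serious obstacle beyond careful bookkeeping of these sign conversions.
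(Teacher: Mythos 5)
Your proof is correct, and it takes a genuinely shorter route than the paper's. Where you integrate by parts once and then invoke the classical Mellin transform of $\sin$, namely $\int_0^{\infty} x^{s-1}\sin x\,\d x = \Gamma(s)\sin\frac{\pi s}{2}$ for $0<s<1$, the paper instead builds everything from the elementary gamma integral $\int_0^{\infty} x^{\gamma-1}\e^{-zx}\,\d x = \Gamma(\gamma)z^{-\gamma}$: it forms $\int_0^{\infty} x^{-\alpha}\e^{i\lambda x - \eps x}\,\d x$, subtracts the $\lambda=0$ version (this subtraction plays the same role as your integration by parts, both producing the $\Gamma(2-\alpha)$ factor), analytically continues in $\alpha$, sends $\eps\to 0+$, and only then takes real parts. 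In effect the paper derives the Mellin transform it needs from scratch via the $\e^{-\eps x}$ regularization, which makes its argument self-contained but longer; your route assumes that transform as known. Both proofs converge on the same intermediate identity (the paper's equation (6.10), which your $\Gamma(2-\alpha)\sin\tfrac{\pi\alpha}{2}/(\alpha-1)$ matches after using $\sin\tfrac{\pi\alpha}{2}=\cos\tfrac{\pi(\alpha-1)}{2}$), and both finish the range $2\le\alpha<3$ by analytic continuation in $\alpha$. Your trigonometric bookkeeping at the end (reflection formula, double angle, $\cos\tfrac{\pi\alpha}{2}=-\sin\tfrac{\pi(\alpha-1)}{2}$) checks out, and your verification that the boundary terms vanish under $1<\alpha<3$ is the right thing to be careful about.
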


As a check, 
the formula \eqref{eq: int 1-cos x / x alpha } 
in the case when $ \alpha =2 $ 
is equivalent 
via integration by parts 
to the well-known formula: 
\begin{align}
\int_0^{\infty } \frac{\sin x}{x} \d x 
= \frac{\pi}{2} . 
\label{}
\end{align}

\begin{proof}
We start with the identity: 
\begin{align}
\int_0^{\infty } x^{\gamma -1} \e^{-z x} \d x = \Gamma(\gamma) z^{-\gamma} 
\label{}
\end{align}
for $ \gamma>0 $ and $ \Re z>0 $. 
For $ 0<\alpha <1 $, $ \eps>0 $ and $ \lambda \in \bR $, 
we set $ \gamma=1-\alpha $ and $ z = \eps - i \lambda $. 
Then we obtain 
\begin{align}
\int_0^{\infty } \frac{\e^{i \lambda x} \e^{- \eps x} }{x^{\alpha }} \d x 
= \Gamma(1-\alpha ) (\eps - i \lambda)^{\alpha -1} . 
\label{eq: i lambda x - eps x}
\end{align}
Using the identity $ \Gamma(2-\alpha ) = (1-\alpha ) \Gamma(1-\alpha ) $ 
and subtracting \eqref{eq: i lambda x - eps x} for $ \lambda=0 $ 
from that for $ \lambda=\lambda $, we obtain 
\begin{align}
\int_0^{\infty } \frac{(1-\e^{i \lambda x}) \e^{- \eps x} }{x^{\alpha }} \d x 
= \Gamma(2-\alpha ) \cdot \frac{ \eps^{\alpha -1} - (\eps - i \lambda)^{\alpha -1} }{1-\alpha } . 
\label{}
\end{align}
Rewriting the right hand side, we obtain 
\begin{align}
\int_0^{\infty } \frac{(1-\e^{i \lambda x}) \e^{- \eps x} }{x^{\alpha }} \d x 
= \Gamma(2-\alpha ) \int_{\eps}^{\eps-i \lambda} z^{\alpha -2} \d z 
\label{eq: 1 - i lambda x - eps x}
\end{align}
where the integration on the right hand side is taken 
over a segment from $ \{ \eps - i l : l \in \bR \} $. 
Since both sides of \eqref{eq: 1 - i lambda x - eps x} 
are analytic on $ 0 < \Re \alpha < 2 $, 
we see, by analytic continuation, that 
the identity \eqref{eq: 1 - i lambda x - eps x} remains true 
for $ 0 < \alpha < 2 $. 

Let us restrict ourselves to the case when $ 1 < \alpha < 2 $. 
Taking the limit $ \eps \to 0+ $ 
on both sides of the identity \eqref{eq: 1 - i lambda x - eps x}, 
we obtain 
\begin{align}
\int_0^{\infty } \frac{1-\e^{i \lambda x}}{x^{\alpha }} \d x 
=& \Gamma(2-\alpha ) \int_0^{-i \lambda} z^{\alpha -2} \d z 
\\
=& \Gamma(2-\alpha ) \cdot \frac{(-i \lambda)^{\alpha -1}}{\alpha -1} 
\label{}
\end{align}
where the branch of $ f(w)=w^{\alpha -1} $ is chosen so that $ f(1) = 1 $. 
Hence we obtain 
\begin{align}
\int_0^{\infty } \frac{1-\e^{i \lambda x}}{x^{\alpha }} \d x 
=& \Gamma(2-\alpha ) \cdot 
\frac{\lambda^{\alpha -1} }{\alpha -1} \e^{-\frac{\pi(\alpha -1)i}{2}} . 
\label{}
\end{align}
Taking the real parts on both sides, we obtain 
\begin{align}
\int_0^{\infty } \frac{1-\cos \lambda x}{x^{\alpha }} \d x 
=& \Gamma(2-\alpha ) \cdot 
\frac{\lambda^{\alpha -1} }{\alpha -1} \cos \frac{\pi(\alpha -1)}{2} . 
\label{}
\end{align}
Letting $ \lambda = 1 $, we obtain 
\begin{align}
\frac{1}{\pi} \int_0^{\infty } \frac{1-\cos x}{x^{\alpha }} \d x 
= \frac{\Gamma(2-\alpha )}{\pi (\alpha -1)} \cdot 
\cos \frac{\pi(\alpha -1)}{2} . 
\label{eq: formula for 1<alpha<2}
\end{align}
(We may find the formula \eqref{eq: formula for 1<alpha<2} 
also in \cite[pp.88]{MR0322926}.) 
By a simple computation, we have 
\begin{align}
\text{(RHS of \eqref{eq: formula for 1<alpha<2})} 
=& \frac{\Gamma(2-\alpha )}{\pi (\alpha -1)} \cdot 
\frac{\sin \pi (\alpha -1)}{2 \sin \frac{\pi (\alpha -1)}{2}} 
\\
=& 
\frac{1}{(\alpha -1) \Gamma(\alpha -1)} 
\cdot \frac{1}{2 \sin \frac{\pi (\alpha -1)}{2}} 
\\
=& 
\frac{1}{2 \Gamma(\alpha ) \sin \frac{\pi (\alpha -1) }{2}} . 
\label{}
\end{align}
Hence we have proved the identity \eqref{eq: int 1-cos x / x alpha } 
when $ 1<\alpha <2 $. 
By analytic continuation, 
the identity \eqref{eq: int 1-cos x / x alpha } 
is proved to be valid also when $ 2 \le \alpha < 3 $. 
Therefore the proof is complete. 
\end{proof}

\def\polhk#1{\setbox0=\hbox{#1}{\ooalign{\hidewidth
  \lower1.5ex\hbox{`}\hidewidth\crcr\unhbox0}}}
  \def\polhk#1{\setbox0=\hbox{#1}{\ooalign{\hidewidth
  \lower1.5ex\hbox{`}\hidewidth\crcr\unhbox0}}}
  \def\polhk#1{\setbox0=\hbox{#1}{\ooalign{\hidewidth
  \lower1.5ex\hbox{`}\hidewidth\crcr\unhbox0}}}

\end{document}